\DeclareMathAlphabet{\mathdutchcal}{U}{dutchcal}{m}{n}
\SetMathAlphabet{\mathdutchcal}{bold}{U}{dutchcal}{b}{n}
\DeclareMathAlphabet{\mathdutchbcal}{U}{dutchcal}{b}{n}
\newcommand*{\s}{{\mathdutchcal{s}}}
\newcommand*{\p}{{\mathdutchcal{p}}}
\newcommand*{\h}{{\mathdutchcal{h}}}
\newcommand*{\dt}{{\mathdutchcal{t}}}
 \newcommand{\sst}{s_\star}
 \newcommand{\tpb}{\mathscr{A}}
 \newcommand{\barm}{\bar{M}}
\newtheorem{theorem}{Theorem}[section]
\newtheorem*{theorem*}{Theorem}
\newtheorem{lemma}{Lemma}[section]
\newtheorem{proposition}[lemma]{Proposition}
\newtheorem{corollary}[lemma]{Corollary}
\theoremstyle{definition}
\newtheorem*{definition*}{Definition}
\newtheorem*{definition}{Definition}
\newcommand*{\E}{\mathbb{E}}
\newcommand*{\N}{\mathbb{N}}
\renewcommand{\P}{\mathbb{P}}
\newcommand*{\R}{\mathbb{R}}
\renewcommand{\leq}{\leqslant}
\renewcommand{\le}{\leqslant}
\renewcommand{\geq}{\geqslant}
\renewcommand{\ge}{\geqslant}
\renewcommand{\subset}{\subseteq}
\newcommand*{\doi}[1]{\href{http://dx.doi.org/\detokenize{#1}}{doi}}
\begin{document}

\title{Multi-range percolation on oriented trees: critical curve and limit behavior}

\author{Bernardo N. B. de Lima\textsuperscript{1}, R\'eka Szab\'o\textsuperscript{2},  Daniel Valesin\textsuperscript{3}}
\footnotetext[1]{Universidade Federal de Minas Gerais. \url{bnblima@mat.ufmg.br}}
\footnotetext[2]{CEREMADE, CNRS, Universit\'e Paris-Dauphine, PSL University. \url{szabo@ceremade.dauphine.fr}}
\footnotetext[3]{Bernoulli Institute, University of Groningen. \url{d.rodrigues.valesin@rug.nl}}
\date{February 26, 2021}
\maketitle

\begin{abstract}
We consider an inhomogeneous oriented percolation model introduced by de Lima, Rolla and Valesin~\cite{LRV17}. In this model, the underlying graph is an oriented rooted tree in which each vertex points to each of its~$d$ children with ``short'' edges, and in addition, each vertex points to each of its~$d^k$ descendant at a fixed distance~$k$ with ``long'' edges. A bond percolation process is then considered on this graph, with the prescription that independently, short edges are open with probability~$p$ and long edges are open with probability~$q$. We study the behavior of the critical curve~$q_c(p)$: we find the first two terms in the expansion of~$q_c(p)$ as~$k \to \infty$, and prove that the critical curve lies strictly above the critical curve of a related branching process, in the relevant parameter region. We also prove limit theorems for the percolation cluster in the supercritical, subcritical and critical regimes. 
\end{abstract}

{\bf\large{}}\bigskip


\section{Introduction}
\subsection{Background and motivation}
This paper is a continuation of the work presented in~\cite{LRV17}.
In that paper, the authors considered an oriented graph whose vertex set is that of the~$d$-regular, rooted tree, containing ``short edges'' (with which each vertex points to its~$d$ children) and, for some~$k \in \mathbb{N}$ fixed, ``long edges'' of range~$k$ (with which each vertex points to its~$d^k$ descendants~$k$ generations below). 
Percolation is defined on this graph by letting short edges be open with probability~$p$ and long edges with probability~$q$.
For all fixed~$q$ one can define the critical percolation threshold as the supremum of the values of~$p$ for which there is almost surely no infinite cluster at parameters~$p, q$. 
The authors of~\cite{LRV17} study the properties of this critical curve and prove monotonicity with respect to the length of the long edges.

The work was originally motivated by the following problem.
Consider the graph having~$\mathbb{Z}^d$ as vertex set and all edges of the form~$\{x, x\pm e_i\}$ and~$\{x, x\pm k\cdot e_i\}$ for some~$k\geq 2$ and~$i\in\{1,\dots,d\}$.
It was shown in \cite{LSS11} that the critical probability for Bernoulli bond percolation on this graph converges to that of~$\mathbb{Z}^{2d}$ as~$k\rightarrow\infty$.
This result was later generalized in \cite{MT17}.
The convergence is conjectured to be monotone, that is, the percolation threshold for the above graph should be decreasing in the length~$k$ of long edges.

Percolation is mostly studied on the lattice $\mathbb{Z}^d$ in homogeneous environment (that is, each edge is open with the same probability independently of each other).
Let us briefly mention some related works that consider an inhomogeneous setting. 
In~\cite{KSV12} an oriented site percolation model is considered on~$\mathbb{Z}^2_+$ in a random environment. Each line $l_i:=\{(x,y)\in\mathbb{Z}_+^2: x+y=i\}$ is declared to be bad with probability~$\delta$, then the sites on bad lines are open with probability~$p_B$ and every other site is open with probability~$p_G$. It is shown that for all~$p_G>p_c(\mathbb Z^2_+)$ and~$p_B>0$ we can choose~$\delta>0$ small enough, that there is an infinite cluster with positive probability. Another interesting paper with the same spirit is \cite{DHKS18}, about Brochette percolation. A bond percolation model is considered on~$\mathbb{Z}^2$ where vertical lines of the form~$\bar l_i:=\{(x,y)\in\mathbb{Z}_+^2: x=i\}$ are selected at random with probability~$\delta$, then the edges on the selected lines are open with probability~$p$ and every other edge is open with probability~$q$. The authors show that for all~$p>p_c(\mathbb{Z}^2)$ and any~$\delta > 0$ we can choose~$q<p_c(\mathbb{Z}^2)$ such that there is an infinite cluster with positive probability.

In~\cite{IRM15} the authors study a non-oriented bond percolation model on~$\mathbb Z^d$ with an~$s$-dimensional defect plane~$\mathbb{Z}^s$. Edges of~$\mathbb Z^d\setminus\mathbb Z^s$ are open with probability~$p$ and edges of~$\mathbb Z^s$ are open with probability~$\sigma$. They present the phase diagram of the model and identify three regimes in which the model exhibits quantitatively different behaviour. They also show that the critical curve is a strictly decreasing function for~$p$ with a jump discontinuity at~$p_c(\mathbb Z^d)$.

In~\cite{GN90} the authors consider non-oriented percolation on the direct product of a regular tree and~$\mathbb{Z}$, where tree edges and line edges are open with different probabilities. They identify three distinct phases in which the number of clusters is~$0, \infty$ and 1 respectively.

In~\cite{SV18} for an arbitrary connected graph~$G=(V, E)$ a non-oriented and an oriented percolation model are defined on the vertex set~$V \times \mathbb{Z}$.
The authors examine how changing the percolation parameter on a fixed (infinite) set of edges affects the critical behaviour.
They show that in both cases the critical parameter changes as a continuous function of these parameters. This result was later generalized in~\cite{LS19}.

\subsection{Description of the model and results}

Given~$d, k \in \{2, 3, \dots\}$, define an oriented graph~$\mathbb{T} = \mathbb{T}_{d,k} = (V,E)$ in the following way.
Denote~
\begin{align*}
[d]=\{1, \dots, d\}, \qquad [d]_*=\bigcup_{0\leq n < \infty}[d]^n.
\end{align*}
The set~$[d]^0$ consists of a single point~$o$, which we will refer to as the root of the graph. Set~$V=[d]_*$, that is, elements of~$V$ are sequences~$v=(v_1, \dots, v_m)$ with~$v_i\in[d]$ (and the root~$o$). Define the concatenation of~$u=(u_1, \dots, u_m)$ and~$v=(v_1, \dots, v_n)$ as
\begin{align*}
u \cdot v &= (u_1, \dots, u_m, v_1, \dots, v_n);\\
v \cdot o &= o\cdot v=v.
\end{align*}
Further let~$E=E_{\s}\cup E_{\ell}$ be the set of oriented edges with
\begin{align*}
E_{\s}=\{\langle r, r \cdot i \rangle: r\in V,\; i\in[d]\},\qquad
E_{\ell}=\{\langle r, r \cdot i \rangle: r\in V,\; i\in[d]^k\}.
\end{align*}
We will refer to these sets as the set of ``short'' and ``long'' edges, respectively.
Define the out-degree of a vertex as the number of oriented edges directed out of the vertex.
Note that in~$\mathbb{T}_{d,k}$ every vertex has out-degree~$d+d^k$.

Consider the following percolation model on~$\mathbb{T}$: every edge in~$E_{\s}$ is open with probability~$p$, and every edge in~$E_{\ell}$ is open with probability~$q$, independently of each other.
Denote the law of the model by~$\mathbb{P}_{p, q}$.
We will omit the subscript~$p, q$ when it is clear from the context; we will also generally omit~$d$ and~$k$ from the notation. Define the cluster of the root~$\mathscr{C} = \mathscr{C}_{p,q}$ as the set of vertices that can be reached by an oriented open path from~$o$, and~$|\mathscr{C}|$ as its cardinality. 
Whether or not the event~$\{|\mathscr{C}_{p,q}|=\infty\}$ occurs with positive probability depends on~$p, q, d$ and~$k$. We define
$$q_c = q_c(p) := \inf\{q: \; \P(|\mathscr{C}_{p,q}| = \infty) > 0\}.$$
Note that the model with~$q=0$ reduces to oriented percolation on a~$d$-ary tree, and the model with~$p = 0$ reduces to oriented percolation on disconnected~$d^k$-ary trees. This shows that~$q_c(0) = d^{-k}$ and~$q_c(p) = 0$ for~$p > d^{-1}$. 

In~\cite{LRV17} it was shown that~$q_c(p)$ is continuous and strictly decreasing in the region where it is positive (namely, for~$p \in[0,d^{-1}])$, and there is almost surely no infinite cluster for~$p \in [0,d^{-1}]$ and~$q =q_c(p)$.

Additionally, it is easy to see that our percolation model is stochastically dominated by a  branching process with offspring distribution that is the sum of two independent binomial random variables, namely~$\text{Bin}(d, p)$ and~$\text{Bin}(d^k, q)$. This branching process is critical for parameters satisfying~$dp + d^kq= 1$. This shows that~$q_c(p) \ge (1-dp)/d^k$ for~$p\in (0,d^{-1})$.
\begin{figure}[htb]
\begin{center}
{\includegraphics[width = 0.5\textwidth]{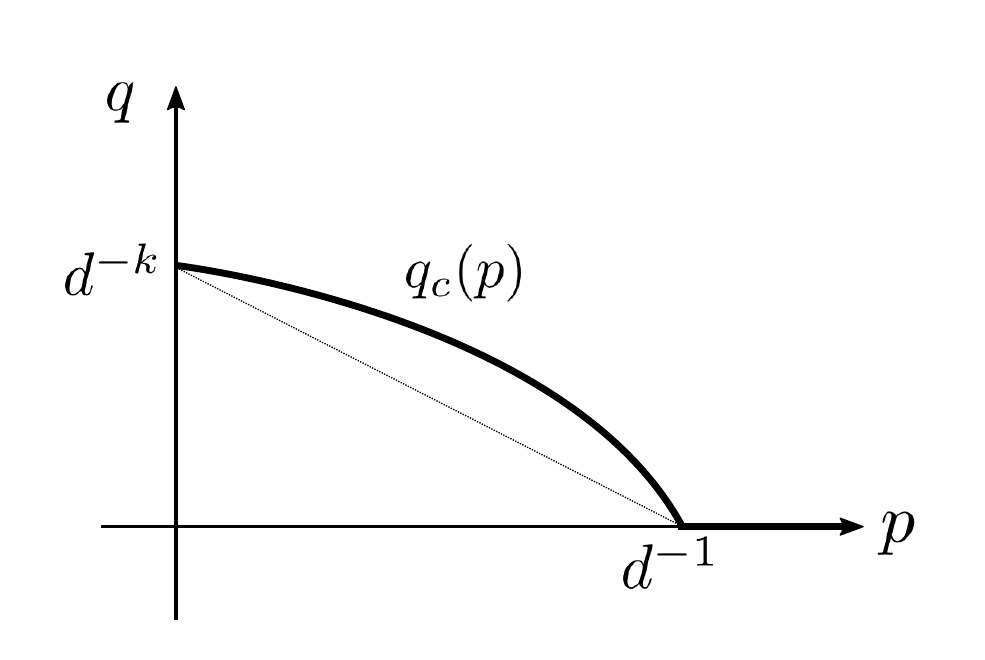}}
\end{center}
\caption{$(0,d^{-1})\ni p\mapsto q_c(p)$ lies strictly above the dotted line, which has equation~$dp + d^kq= 1$. }
\label{fig:pcq}
\end{figure}
 Our first result is the statement that this inequality is strict, as depicted in Figure~\ref{fig:pcq}.
\begin{theorem}\label{THM:CRITCURVE}
	For every~$p\in(0,d^{-1})$ we have $q_c(p)> (1-dp)/d^k$.
\end{theorem}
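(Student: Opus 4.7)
The plan is to sharpen the branching-process bound $\chi := \E|\mathscr{C}| \le 1/(1 - dp - d^k q)$ by a single step of inclusion--exclusion. I start from the neighbor decomposition
\[
\mathscr{C} \setminus \{o\} \;=\; \bigcup_{v \in N(o)} B_v,
\qquad
B_v := \begin{cases}
\mathscr{C}_v & \text{if the edge } \langle o,v\rangle \text{ is open,}\\
\emptyset & \text{otherwise,}
\end{cases}
\]
where $N(o)$ is the set of the $d+d^k$ out-neighbors of the root and $\mathscr{C}_v$ denotes the percolation cluster of $v$ (contained in the subtree rooted at $v$ since edges point downward). Because each such subtree is isomorphic to $\mathbb{T}$ with the same edge-weights, $\E|\mathscr{C}_v| = \chi$ for every $v$, and the naive union bound $|\bigcup_v B_v| \le \sum_v |B_v|$ recovers only $\chi \le 1/(1 - dp - d^k q)$.

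To do strictly better, apply the Bonferroni-style inequality $|\bigcup_i A_i| \le \sum_i |A_i| - |A_{i_1}\cap A_{i_2}|$ (valid for any designated pair) to $v_1 := o\cdot 1$ (a short neighbor) and $v_2 := o\cdot(1,1,\ldots,1)\in[d]^k$ (the long neighbor that lies inside the subtree of $v_1$). The key geometric observation is that $v_2$ sits at tree-distance $k-1$ below $v_1$; since $k-1<k$, no long edge issuing from an ancestor of $v_2$ inside the subtree of $v_1$ can land on $v_2$ without overshooting it, so the only oriented path from $v_1$ to $v_2$ is the unique chain of $k-1$ short edges. Denote by $G$ the event that this chain is entirely open. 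On $G$ one has $v_2 \in \mathscr{C}_{v_1}$ and hence $\mathscr{C}_{v_2} \subset \mathscr{C}_{v_1}$; writing $A_v := \{\langle o,v\rangle \text{ open}\}$, this gives $|B_{v_1} \cap B_{v_2}| \ge |\mathscr{C}_{v_2}|$ on $A_{v_1}\cap A_{v_2}\cap G$.

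The events $A_{v_1}$, $A_{v_2}$, $G$ and the random set $\mathscr{C}_{v_2}$ are measurable with respect to pairwise-disjoint sets of edges (the two initial edges, the $k-1$ chain edges, and the edges of the subtree rooted at $v_2$), hence independent, and so
\[
\E|B_{v_1}\cap B_{v_2}| \;\ge\; p\cdot q\cdot p^{k-1}\cdot \chi \;=\; p^k q\,\chi.
\]
Taking expectations in the Bonferroni bound gives $\chi - 1 \le (dp + d^k q)\chi - p^k q\,\chi$, equivalently
\[
\chi\bigl(1 - dp - d^k q + p^k q\bigr) \;\le\; 1.
\]
The bracketed factor is strictly positive exactly when $q < (1-dp)/(d^k - p^k)$; in that range $\chi < \infty$, forcing $|\mathscr{C}|<\infty$ almost surely. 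This yields the quantitative bound $q_c(p) \ge (1-dp)/(d^k - p^k)$, strictly larger than $(1-dp)/d^k$ because $p^k > 0$ for $p \in (0, d^{-1})$.

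The only delicate step is the geometric identification of the unique $v_1$-to-$v_2$ path, which rests on the elementary fact that $k-1<k$ precludes any long-edge shortcut; everything else is a mechanical expectation computation exploiting independence of disjoint edge sets.
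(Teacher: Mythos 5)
Your approach is genuinely different from the paper's. The paper proves Theorem~\ref{THM:CRITCURVE} by comparing $\mathscr{C}_{p,q}$ with the family tree of a branching process on the regular tree $\widehat{\mathbb{T}}_{d,k}$ whose offspring law is $\mathrm{Bin}(d,p)+\mathrm{Bin}(d^k,q)$, and then uses the perturbative coupling Lemma~\ref{lemma:coupling} to show that stochastic domination persists after decreasing $q$ to $q-\delta$; this yields a strict but non-explicit improvement. You instead sharpen the first-moment bound by a one-term Bonferroni correction, singling out a short neighbor $v_1$ and a long neighbor $v_2$ in its subtree. Your geometric observation, the independence of the four relevant disjoint edge sets, and the inequality $|\bigcup_i A_i| \le \sum_i |A_i| - |A_{i_1}\cap A_{i_2}|$ are all correct, and you obtain the clean quantitative bound $q_c(p) \ge (1-dp)/(d^k-p^k)$, which is indeed strictly larger than $(1-dp)/d^k$ and consistent with Theorem~\ref{thm:qck}.

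There is, however, a genuine gap at the final step. Your chain of inequalities gives $\chi - 1 \le (dp + d^k q - p^k q)\,\chi$ as a statement in $[0,\infty]$; rearranging it to $\chi\bigl(1 - dp - d^k q + p^k q\bigr)\le 1$ and reading off $\chi<\infty$ is circular, because when $\chi=\infty$ the original inequality is $\infty\le\infty$ and carries no information. The regime where your improvement matters is precisely $1 \le dp + d^k q$ but $dp + (d^k-p^k)q < 1$, where no a priori finiteness of $\chi$ is available from the naive bound. The standard repair is to run your argument on the truncated moments $\chi_n := \E|\mathscr{C}\cap\{v:h(v)\le n\}|$, which are finite for every $n$. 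The same decomposition and Bonferroni step give, for $n\ge k$,
\[
\chi_n \;\le\; 1 + dp\,\chi_{n-1} + (d^k-p^k)q\,\chi_{n-k} \;\le\; 1 + \bigl(dp + (d^k-p^k)q\bigr)\chi_{n-1},
\]
using monotonicity of $m\mapsto\chi_m$. When $c:=dp+(d^k-p^k)q<1$, iterating from the finite value $\chi_{k-1}$ gives $\sup_n \chi_n \le (1-c)^{-1} + \chi_{k-1} < \infty$, hence $\chi<\infty$ and $|\mathscr{C}|<\infty$ a.s. With that paragraph added, your proof is complete and arguably more elementary than the paper's, at the cost of not producing the domination-by-a-perturbed-branching-process statement of Proposition~\ref{prop:main}.
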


Next, we study the asymptotic behavior of~$q_c(p)$ as the length~$k$ of long edges is taken to infinity, when~$p < d^{-1}$ (that is, there is no percolation using only short edges). Since we have that~$q_c(p) \le q_c(0) = d^{-k}$ for any~$p \ge 0$, we readily obtain that~$q_c(p) \xrightarrow{k \to \infty} 0$. We find the two first asymptotic terms in this convergence.
\begin{theorem} \label{thm:qck} Assume~$pd < 1$. Then, as~$k \to \infty$,
	\begin{equation*}
	q_c(k) = (1-pd)\cdot \frac{1}{d^k} + \frac{(1-pd)^2p^2d}{1-p^2d}\cdot \frac{1}{d^{2k}} + o\left(\frac{1}{d^{2k}}\right).
	\end{equation*}
\end{theorem}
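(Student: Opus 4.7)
The plan is to obtain matching asymptotic lower and upper bounds on $q_c(k)$ by analysing the expected cluster size $\E|\scC|$ through a last-edge inclusion-exclusion. Writing $p_n = \P(v \in \scC)$ for a fixed vertex $v$ at level $n$ and $r_n$ for the joint probability that both the short-parent and the long-parent of $v$ lie in $\scC$, I would establish the recursion $p_n = p\,p_{n-1} + q\,p_{n-k} - pq\,r_n$ for $n \ge k$ (with $p_n = p^n$ for $n < k$) and obtain, upon summing against $d^n$, the identity
\begin{equation*}
(1-pd-qd^k)\,\E|\scC| \;=\; 1 - pq R, \qquad R := \sum_{n\ge k} d^n r_n.
\end{equation*}
By a tree-type argument, $q_c$ is the supremum of $q$ for which $\E|\scC|<\infty$; dividing the identity by $R$ and passing to the limit $q\nearrow q_c$ gives the fixed-point equation $q_c d^k - (1-pd) = p q_c/\beta_c$, with $\beta_c := \lim_{q\nearrow q_c}\E|\scC|/R$. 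Consequently, $q_c = (1-pd)/d^k + p(1-pd)/(\beta_c d^{2k}) + o(d^{-2k})$, and the problem reduces to computing $\lim_{k\to\infty}\beta_c = (1-p^2 d)/((1-pd)pd)$.

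I would pin down $\beta_c$ by analysing the dominant contribution to $R$ at the critical scaling $q = \lambda/d^k$ with $\lambda$ near $1-pd$. The leading contribution comes from pair-correlation configurations in which the long-parent lies on the short-path ancestral chain connecting the root to the short-parent, with the relevant open paths consisting of a short subpath to the long-parent plus a further open long edge originating $j\in\{1,\ldots,k-1\}$ short-levels below. Summing over $j$ and over the tree positions produces the geometric series $\sum_{j\ge 1}(p^2 d)^j = p^2 d/(1-p^2 d)$, which is exactly the factor appearing in the second-order coefficient. To turn this heuristic into matching bounds I would combine it with a stochastic-domination upper bound on $\scC$ by a multi-type branching process (giving the lower bound on $q_c$) and with an embedded Galton--Watson tree of ``seed'' vertices---those entering $\scC$ via a long edge whose short-parent edge is closed---to produce an explicit supercritical sub-process (giving the upper bound on $q_c$).

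The main obstacle will be the rigorous control of subleading corrections to both $R$ and the embedded branching-process mean. Exponentially small corrections of order $p^k$ are easily dominated, but the polynomial-in-$d^{-k}$ corrections coming from higher-order pair configurations (two or more long edges on each side, longer-range intersection patterns) require tight bookkeeping. The strict inequality $p^2 d < pd < 1$, ensuring absolute convergence of the relevant geometric sums, is what traps all further corrections at order $o(d^{-2k})$ and allows the matching of lower and upper bounds at the stated precision.
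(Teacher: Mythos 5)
Your route is genuinely different from the paper's. The paper constructs a multi-type branching process whose types are ``shapes'' of admissible vertex-sets (subsets of $V_\Gamma^o$ containing $o$), reduces survival/extinction to inequalities on the mean offspring matrix via Lemma~\ref{lem:main_comparison_bp}, and then estimates the matrix entries term by term (Lemmas~\ref{lem:use_height}--\ref{lem:card2} feeding into Proposition~\ref{prop:scomp}). You instead work with the level-wise probabilities $p_n=\P(v\in\scC)$, and both your inclusion--exclusion recursion $p_n=p\,p_{n-1}+q\,p_{n-k}-pq\,r_n$ and the resulting identity $(1-pd-qd^k)\,\E|\scC| = 1 - pqR$ are correct; the limit $\beta_c=\lim_{q\nearrow q_c}\E|\scC|/R$ exists and equals $pq_c/(q_cd^k-(1-pd))$, which is finite and positive by Theorem~\ref{THM:CRITCURVE}, so the fixed-point step is legitimate. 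This does reduce the theorem to proving $\beta_c(k)\to(1-p^2d)/((1-pd)pd)$.

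That reduction, however, is exactly where the gap lies, and it is not a minor one. The fixed-point equation is circular on its own: it determines $q_c$ only once $\beta_c$ is computed independently, and your derivation of $\beta_c$ is a heuristic. The geometric factor $p^2d/(1-p^2d)$ you identify does arise in the paper too (as $\sum_h d^h p^{2h}$, the contribution of types of cardinality two), but you do not explain how to control the ratio $R/\E|\scC|$ uniformly over a near-critical window of $q$ without already knowing $q_c$ to second order, nor do you specify the multi-type branching process giving the stochastic-domination upper bound, nor the ``embedded Galton--Watson tree of seed vertices'' for the lower bound, nor how these interface with the $\beta_c$ heuristic. Your closing remark that the main obstacle is ``the rigorous control of subleading corrections'' is accurate: what is missing is precisely the content of the paper's Lemmas~\ref{lem:use_height}--\ref{lem:card2} and the dominated-convergence bookkeeping in the proof of Proposition~\ref{prop:scomp}, namely showing that all contributions other than the cardinality-two pair terms are $o(d^{-2k})$. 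A further small point: the characterization $q_c=\sup\{q:\E|\scC|<\infty\}$, which your argument relies on, is not automatic; in the paper it follows from the branching process representation (Corollary~\ref{cor:bp_app} and Lemma~\ref{lem:corresp}), and an independent argument would be needed in your framework. In short, the algebra is a clean and appealing reformulation, but as written the proposal does not establish the second-order coefficient; making it rigorous appears to require an analysis of comparable weight to the one the paper carries out.
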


Our proofs rely on comparisons between exploration processes of our percolation cluster, on the one hand, and multi-type branching processes, on the other hand.{ With such comparisons at hand, many results from the theory of multi-type branching processes can be applied to the percolation cluster; in the following theorem we collect a few that we find particularly noteworthy.} Let~$X_n$ denote the number of vertices of height~$n$  in~$\mathscr{C}_{p,q}$.
\begin{theorem}\label{thm:limits}
\begin{itemize}
\item[$\mathrm{(i)}$] If~$q > q_c(p)$, then there exists a constant ~$\rho=\rho(p, q)$ and a nonnegative random variable~$Y$ such that~$\mathbb P(Y>0)>0$ and 
 \[
  \lim_{n\rightarrow\infty}\frac {X_n}{\rho^n}=Y  \text{ a.s.}
 \]
\item[$(\mathrm{ii})$] If~$q < q_c(p)$, then for each~$i \ge 0$ the limit
$$P(i) :=\lim_{n \to \infty} \P(X_n = i\mid X_n \neq 0)  $$
exists. Moreover,~$\sum_{i=0}^\infty P(i) = 1$.
\item[$\mathrm{(iii)}$] Assume that~$p \in [0,d^{-1}]$ and~$q = q_c(p)$. Then,~$\mathscr{C}_{p,q}$ rooted at~$o$ and conditioned on having more than~$n$ vertices converges locally as~$n \to \infty$  to a random rooted graph (in the Benjamini-Schramm sense~\cite{BS01}). 
\end{itemize}
\end{theorem}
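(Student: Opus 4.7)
The plan is to couple the cluster $\mathscr{C}$ with an appropriately chosen multi-type Galton-Watson process $(\hat Z_n)_{n\ge 0}$ on a finite type space and then invoke classical limit theorems for multi-type branching processes. For each vertex $v\in\mathscr{C}$ I would assign a type encoding, roughly, its height modulo $k$ together with the nature of its canonical predecessor edge (short versus long). Processing the cluster in breadth-first order by height and revealing the outgoing short and long edges of each vertex as it is discovered, the resulting generation-by-generation counts form (or tightly bracket) a multi-type Galton-Watson process with a primitive $k\times k$ mean matrix $M=M(p,q)$. By Perron-Frobenius, $M$ admits a simple largest eigenvalue $\rho=\rho(p,q)$ with strictly positive left and right eigenvectors, and the three regimes $q>q_c(p)$, $q=q_c(p)$, $q<q_c(p)$ are to correspond, respectively, to $\rho>1$, $\rho=1$, $\rho<1$.

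Given this coupling, the three parts follow from classical results about multi-type Galton-Watson processes. For $(\mathrm{i})$, the offspring distributions are sums of independent Bernoullis, hence have all moments and in particular satisfy the $x\log x$ integrability condition; the multi-type Kesten-Stigum theorem then yields $\hat Z_n/\rho^n\to W\mathbf v$ almost surely, where $W\ge 0$ satisfies $\P(W>0)>0$ and $\mathbf v$ is the right Perron-Frobenius eigenvector, and summing coordinates gives $X_n/\rho^n\to Y$ with $\P(Y>0)>0$. For $(\mathrm{ii})$, the Yaglom-type quasi-stationary limit theorem for subcritical multi-type Galton-Watson processes (Joffe, Jagers) asserts that the conditional law of $\hat Z_n$ given nonextinction converges weakly to a probability distribution on the nonzero states; projecting onto the total count yields the limits $P(i)$ with $\sum_i P(i)=1$. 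For $(\mathrm{iii})$, I would invoke the multi-type version of Kesten's theorem on critical Galton-Watson trees conditioned on large total size (cf.\ Abraham-Delmas and Stephenson), which provides a Benjamini-Schramm local limit of the critical tree; this translates directly, through the coupling, into the stated convergence of the random graph $\mathscr{C}$ rooted at $o$.

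The main technical point -- and the hardest step of the argument -- is constructing the coupling so that $(\hat Z_n)$ is a bona fide multi-type Galton-Watson process whose Perron-Frobenius threshold $\rho=1$ coincides with $q=q_c(p)$. The first requirement is delicate because a single short edge $w\to w\cdot i$ plays two roles: it governs both whether $w\cdot i$ is a short-child of $w$ in the canonical spanning tree of $\mathscr{C}$ and whether $w\cdot i$ is a long-descendant of the long-ancestor of $w\cdot i$ in that tree; hence the offspring sets of different vertices are not a priori functions of disjoint edges. Resolving this requires a careful choice of exploration order and type space so that the conditional-independence structure of a Galton-Watson recursion is recovered, which I would carry out by revealing outgoing edges at the moment of each vertex's first discovery and then checking the required independence inductively on heights. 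The matching of the thresholds is consistent with Theorem~\ref{THM:CRITCURVE}, which already signals that the relevant multi-type process must be strictly more refined than the naive branching random walk with mean offspring $dp+d^kq$.
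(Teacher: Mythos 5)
Your high-level strategy matches the paper's (build a multi-type Galton--Watson process capturing the cluster's generation-by-generation evolution, then invoke the Kesten--Stigum, Yaglom/Joffe, and Kesten/Abraham--Delmas/Stephenson theorems), but your proposed construction has a genuine gap at precisely the step you flag as hardest.

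The type space you propose -- height modulo $k$ together with whether the ``canonical predecessor edge'' is short or long -- is far too coarse to yield a Markovian branching structure. The obstruction you correctly identify (a single short edge both governs a short parent-child relation and the possibility of a long ancestor reaching further down) cannot be resolved by exploration order alone: the conditional offspring distribution of a vertex $v$ at height $n$ depends on the entire pattern of which descendants of $v$ at heights $n, n+1, \ldots, n+k-1$ already lie in $\mathscr{C}$, since a vertex $w$ at height $n+k$ below $v$ is in $\mathscr{C}$ iff either its short-parent (at height $n+k-1$) is in $\mathscr{C}$ and the short edge to $w$ is open, or the long edge $\langle v, w\rangle$ is open. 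The paper therefore takes as type the set $S(v) = \{u \in V^o_\Gamma : v\cdot u\in\mathscr{C}\}$, an arbitrary (non-empty) subset of the depth-$(k-1)$ tree rooted at $o$; the type space $\mathscr{B}$ is exponentially large in the volume of that window, not $O(k)$. With that choice, the windows of distinct height-$n$ vertices are disjoint, and the transition from generation $n$ to $n+1$ only queries edges entirely inside or issuing from these disjoint windows, which is what delivers the required conditional independence. Your $k$-by-$k$ type space drops this information, and no choice of exploration order can recover it.

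Two further points you leave unaddressed. First, showing that $\rho(p,q)<1$ whenever $q<q_c(p)$ is nontrivial and cannot simply be read off the coupling. The paper proves it via a perturbation argument: it introduces an intermediate cluster $\mathscr{C}^*$ sandwiched between $\mathscr{C}_{p,q}$ and $\mathscr{C}_{p,q'}$ for some $q'\in(q,q_c(p))$, whose mean matrix strictly dominates $M_{p,q}$ in one entry, and then invokes the first-order perturbation formula for the Perron--Frobenius eigenvalue to conclude $\rho^*>\rho_{p,q}$ while $\rho^*\le 1$. Second, for part (iii) the paper's process $\mathbf{W}$ is not directly usable: the family tree of $\mathbf{W}$ does not determine $\mathscr{C}$ uniquely (a given type in generation $n+1$ can correspond to several distinct vertices of $\mathbb{T}$), so they must augment the type space to $\mathscr{B}\times[d]$ before applying Stephenson's theorem; your proposal does not notice this mismatch between the family tree and the rooted graph $\mathscr{C}$.
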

We will recall the meaning of Benjamini-Schramm local convergence of rooted graphs in Section~\ref{ss:limit_theorems}.

\subsection{Discussion of proofs and organization of the paper}
The proofs of each of our theorems rely on comparisons with branching processes (multi-type branching processes for Theorems~\ref{thm:qck} and~\ref{thm:limits}, and single-type branching processes for Theorem~\ref{THM:CRITCURVE}). In {Section~\ref{sect:branchingproc}} we give an overview of multi-type branching processes, with the aim of fixing notation, listing common regularity and positivity assumptions on the offspring distribution, and reviewing key related objects (the mean offspring matrix and its Perron-Frobenius eigenvalue).

In {Section~\ref{sect:mainthm}} we prove Theorem~\ref{thm:qck}. The main idea is to explore the percolation cluster by repeating a two-stage procedure which  takes as input a set~$B$ of vertices (all of which have a common ancestor at distance smaller than~$k$ away), first explores its ``short cluster'' (that is, the set of vertices that can be reached from~$B$ through open short edges), and secondly reveals all open long edges that start at this short cluster. The endpoints of these open long edges are then grouped into sets~$B_1,B_2,\ldots$, to each of which the procedure is applied again. A similar exploration method was employed in~\cite{LRV17}. The contribution here is to take the analysis of this exploration further by studying a multi-type branching process, where the set of types is the set of all possible ``shapes'' of the input set. A careful analysis of the mean offspring matrix of this branching process yields upper and lower bounds to the threshold~$q_c(p)$.

In {Section~\ref{sect:limitthm}} we prove Theorem~\ref{thm:limits}. This relies on a much simpler exploration method, which reveals the percolation cluster by incrementing its height one unit at a time. The three parts of Theorem~\ref{thm:limits} are almost direct applications of corresponding branching process limit theorems, which we recall in that section.

Finally, {Section~\ref{sect:critcurve}} contains the proof of Theorem~\ref{THM:CRITCURVE}. The method we employ is a direct comparison between the percolation cluster and the branching process that we have briefly described before stating Theorem~\ref{THM:CRITCURVE}. We show that if we take the branching process with parameters~$(p,q-\delta)$ (for~$\delta$ small), it still stochastically dominates the percolation cluster with parameters~$(p,q)$. This is achieved by means of a useful coupling technique (reproduced as Lemma~\ref{lemma:coupling} below) which has been fruitful in other contexts~\cite{LRV17,LS19, SV18}.

\section{Basic facts about multi-type branching processes}\label{sect:branchingproc}

In this section, we give some basic definitions and notations concerning multi-type branching processes. We refer the reader to~\cite{AN72} for a detailed introduction to this topic.

We let~$\mathscr{T}$ be a finite set, representing the  space of types for our branching process. Elements of~$\mathscr{T}$ will be denoted by lower-case letters such as~$a$ and~$b$. The state space of a multi-type branching process with space of types~$\mathscr{T}$ is the set~$(\N_0)^{\mathscr{T}}$; elements of this set will be denoted by Greek letters such as~$\eta$ or~$\xi$. The interpretation is that~$\eta \in (\N_0)^{\mathscr{T}}$ corresponds to a population with~$\eta(a)$ individuals of type~$a$, for each~$a \in \mathscr{T}$. We let~$\mathbf{e}_a \in (\N_0)^\mathscr{T}$ denote a population with a single individual of type~$a$.
We also denote by~$\mathbf{0}$  the identically zero element of~$(\N_0)^\mathscr{T}$.

 Next, let~$\p:\mathscr{T}\times (\mathbb{N}_0)^\mathscr{T} \to \mathbb{R}$ be a function such that \[\p(a,\eta) \ge 0 \;\forall a \in \mathscr{T},\;\eta \in (\N_0)^\mathscr{T}\quad \text{and}\quad \sum_{\eta \in (\N_0)^\mathscr{T}} \p(a,\eta)=1 \;\forall a \in \mathscr{T}. \]  We interpret~$\p(a,\eta)$ as the probability that an individual of type~$a$ at generation~$n$ is replaced by the population~$\eta$ at generation~$n+1$.  In order to avoid certain pathological situations, it is common to assume that~$\p$ is such that at least one of the types has positive probability of generating a population with more than one individual, that is,
\begin{equation}\label{eq:not_rw}\tag{$\star$}\p(a,\eta) > 0 \text{ for some } a \in \mathscr{T} \text{ and } \eta \in (\N_0)^\mathscr{T} \text{ with } \sum_b \eta(b) > 1.\end{equation}

A multi-type branching process with space of types~$\mathscr{T}$ and offspring distribution~$\p$ is a Markov chain~$\mathbf{Z}=(\mathbf{Z}_n)_{n \ge 0}$ on~$(\N_0)^\mathscr{T}$ with transition function described as follows.
 Given that~$\mathbf{Z}_n = \eta$, the distribution of~$\mathbf{Z}_{n+1}$ is equal to the law of
$$\sum_{a \in \mathscr{T}} \sum_{\ell=1}^{\eta(a)} \chi_{a,\ell},$$
where~$(\chi_{a,\ell}:a\in \mathscr{T},\;\ell\in \N)$ are independent random variables  with~$\chi_{a,\ell} \sim \p(a,\cdot)$ for each~$a,\ell$ (and the second summation above should be understood  as zero if~$\eta(a) = 0$).
We refer to~$\mathbf{Z}_n$ as the population at generation~$n$.

We say that the multi-type branching process~$\mathbf{Z}$ goes extinct if the event that~$\mathbf{Z}_n = \mathbf{0}$ for some~$n$ occurs; otherwise we say that it survives. We denote its survival probability by
\[\zeta(a) = \P(\mathbf{Z} \text{ survives} \mid \mathbf{Z}_0 = \mathbf{e}_a),\quad a \in \mathscr{T}.\]
With some abuse of notation, we can define a function from~$(\N_0)^\mathscr{T}$ to~$\R$, also denoted~$\zeta$, by letting
\begin{equation}\label{eq:def_of_zeta}\zeta(\eta):=\P(\mathbf{Z} \text{ survives}\mid \mathbf{Z}_0 = \eta) = 1-\prod_{a \in \mathscr{T}}(1-\zeta(a))^{\eta(a)}, \quad \eta \in (\N_0)^\mathscr{T}.\end{equation}

We next define the mean offspring matrix
\begin{equation} \label{eq:matrix_M}M(a,b):=\E[\mathbf{Z}_1(b) \mid \mathbf{Z}_0 = \mathbf{e}_a],\qquad a,b \in \mathscr{T}.\end{equation}
A common assumption about this matrix is that
\begin{equation}
\label{eq:pos} \tag{P} \text{there exists } n \in \N \text{ such that }M^n(a,b) > 0 \text{ for all }a,b \in \mathscr{T}.
\end{equation}
If~$M$ satisfies~\eqref{eq:pos}, then by the Perron-Frobenius theorem it has a maximal eigenvalue~$\rho$ which is positive and simple.

For the rest of this section, we assume that both~\eqref{eq:not_rw} and~\eqref{eq:pos} hold; moreover, all the multi-type branching processes that we consider in the rest of the paper will satisfy both assumptions. When they are in place, the following holds. If~$\rho\leq1$ we have~$\zeta(a)=0$ for all~$a\in \mathscr{T}$, whereas  if~$\rho > 1$ we have~$\zeta(a) >0$ for all~$a \in \mathscr{T}$ (see for example Chapter~V of \cite{AN72}).
We call the process supercritical, critical or subcritical if~$\rho>1$,~$\rho=1$ or~$\rho<1$ respectively.

Finally, let us mention that, in place of a stochastic process~$(\mathbf{Z}_n)_{n \ge 0}$ on~$(\N_0)^\mathscr{T}$, a natural way to represent a multi-type branching process is by means of a (random, marked, unoriented) tree, which we will call the family tree of the process, denoted~$\tau$. The construction is as follows: each individual of type~$a$ in~$\mathbf{Z}$ is represented by a vertex with mark~$a$ in~$\tau$, and an (unoriented) edge is placed for each pair~$\{\text{parent, child}\}$. In case the initial population of~$\mathbf{Z}$ has a single individual, then~$\tau$ consists of a single connected tree (which is infinite if and only if~$\mathbf{Z}$ survives). More generally, an initial population with~$m$ individuals would give rise to~$m$ disconnected trees, but we will not need to consider this case. The random tree~$\tau$ is sometimes referred to as a multi-type Galton-Watson tree.

\section{Asymptotics on length of long edges}\label{sect:mainthm}
The goal of this section is proving Theorem~\ref{thm:qck}. Before going back to our percolation model on~$\mathbb{T}$, we prove some results about transformations on multi-type branching processes, and the effects of these transformations on the survival probability. This is done in Section~\ref{ss:further_mtbp}; there, the main result we are after is Lemma~\ref{lem:main_comparison_bp} below. We will then appeal to this result when analyzing the mean offspring matrix associated to a branching process that arises when exploring the percolation cluster, in Section~\ref{ss:first_bpropc}.
\subsection{Preliminary results on branching processes}\label{ss:further_mtbp}
We will now prove some auxiliary results concerning two comparison processes obtained from a multi-type branching process.

Let~$\mathbf{Z}=(\mathbf{Z}_n)_{n \ge 0}$ be a multi-type branching process with set of types~$\mathscr{T}$ and offspring distribution~$\p$. Let~$I \subsetneq \mathscr{T}$ be a strict subset of the set of types.  Define a new multi-type branching process~$\mathbf{X}=(\mathbf{X}_n)_{n \ge 0}$ with same set of types~$\mathscr{T}$, and offspring distribution~$\tilde{\p}$ given as follows. For each~$a \in \mathscr{T}$, the distribution~$\tilde{\p}(a,\cdot)$ on~$(\N_0)^{\mathscr{T}}$ is the distribution of
$$\sum_{b \notin I} \chi(b)\cdot \mathbf{e}_b + \sum_{b \in I} \sum_{\ell=1}^{\chi(b)} \chi'_{b,\ell} , $$
where
$$\chi \sim \p(a,\cdot) \quad \text{and}\quad \chi'_{b,\ell} \sim \p(b,\cdot) \; \text{for all } b \in I,\; \ell \in \N,$$
all these variables being taken independently. In words, this is described as follows. Fix an individual of type~$a$ in some generation of~$\mathbf{X}$. Then, to sample the population obtained as offspring of this individual, we first sample~$\chi \sim \p(a,\cdot)$, according to the law of the offspring of type~$a$ in the original process~$\mathbf{Z}$. The portion of the population~$\chi$ whose type does not belong to~$I$ is left unaltered. Next, each individual in~$\chi$ of type~$b \in I$ is replaced by a further offspring sampled (independently) according to the law~$\p(b,\cdot)$. All these populations are then combined together. 

The following can be easily proved with the aid of the family tree of~$\mathbf{Z}$; we omit the details.
\begin{lemma} \label{lem:first_comp}
Let~$\mathbf{Z}$,~$I$ and~$\mathbf{X}$ be as above. We then have, for any~$\eta \in (\N_0)^\mathscr{T}$,
$$\P(\mathbf{Z} \text{ survives}\mid \mathbf{Z}_0 = \eta) = \P(\mathbf{X} \text{ survives}\mid \mathbf{X}_0 = \eta).$$
\end{lemma}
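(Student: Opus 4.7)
The plan is to construct, on a common probability space, the family trees $\tau_{\mathbf{Z}}$ and $\tau_{\mathbf{X}}$ so that $\tau_{\mathbf{X}}$ becomes a deterministic function of $\tau_{\mathbf{Z}}$, and then show that the two are infinite on precisely the same event. First, build $\tau_{\mathbf{Z}}$ from $\mathbf{Z}_0=\eta$ using independent offspring samples $\chi_v\sim\p(a_v,\cdot)$ at every vertex $v$ of type $a_v$. Then derive a random tree $\tau'$ by induction on depth: its initial generation equals that of $\tau_{\mathbf{Z}}$, and once $v\in\tau'$ has been placed, the children of $v$ in $\tau'$ are declared to be (i)~the children of $v$ in $\tau_{\mathbf{Z}}$ whose type lies outside $I$, and (ii)~the children in $\tau_{\mathbf{Z}}$ of those children of $v$ in $\tau_{\mathbf{Z}}$ whose type lies in $I$.

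The first key step is to verify that $\tau'$ has the same law as $\tau_{\mathbf{X}}$. Fix any $v\in\tau'$ of type $a$, write $\chi=\chi_v$, and for each $I$-type child $w$ of $v$ in $\tau_{\mathbf{Z}}$ relabel $\chi_w$ as $\chi'_{a_w,\ell}$ with an appropriate enumeration $\ell$. By construction, the multiset of children of $v$ in $\tau'$ equals exactly
\[
\sum_{b\notin I}\chi(b)\,\mathbf{e}_b \;+\; \sum_{b\in I}\sum_{\ell=1}^{\chi(b)}\chi'_{b,\ell},
\]
which matches $\tilde{\p}(a,\cdot)$ from the definition of $\mathbf{X}$. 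Independence across vertices of $\tau'$ is then inherited from independence of the underlying samples $(\chi_u)$, once one observes that each $\chi_u$ is consumed exactly once in the construction: either $u$ itself lies in $\tau'$ (in which case $\chi_u$ determines $u$'s offspring in $\tau'$), or $u\in I$ is a skipped child of some $\tau'$-vertex $v'$ (in which case $\chi_u$ feeds into the offspring of $v'$ in $\tau'$ via rule~(ii)). A short case analysis confirms these are the only two possibilities for a vertex of $\tau_{\mathbf{Z}}$.

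It remains to show that $\tau'$ is infinite if and only if $\tau_{\mathbf{Z}}$ is. Since the vertex set of $\tau'$ sits inside that of $\tau_{\mathbf{Z}}$, one direction is immediate. For the other, suppose $\tau_{\mathbf{Z}}$ is infinite; since $\mathscr{T}$ is finite every vertex has finite out-degree, so $\tau_{\mathbf{Z}}$ is locally finite and K\"onig's lemma yields an infinite ray $v_0,v_1,v_2,\ldots$ issuing from an initial-generation vertex $v_0\in\tau'$. A one-line induction based on the construction rule shows that whenever $v_i\in\tau'$, at least one of $v_{i+1}$ or $v_{i+2}$ also lies in $\tau'$ (the latter precisely when $a_{v_{i+1}}\in I$), producing an infinite ray in $\tau'$. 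Chaining the resulting equalities
\[
\P(\mathbf{Z}\text{ survives}\mid\mathbf{Z}_0=\eta)=\P(\tau_{\mathbf{Z}}\text{ infinite})=\P(\tau'\text{ infinite})=\P(\tau_{\mathbf{X}}\text{ infinite})=\P(\mathbf{X}\text{ survives}\mid\mathbf{X}_0=\eta)
\]
finishes the proof. The only subtlety is the distributional identity $\tau'\stackrel{d}{=}\tau_{\mathbf{X}}$, which amounts to tracking carefully which offspring variables of $\tau_{\mathbf{Z}}$ contribute to which generation of $\tau'$; everything else is a direct unwinding of definitions.
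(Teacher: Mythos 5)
Your argument is correct and is precisely the ``family tree'' coupling that the paper alludes to when it writes that the lemma ``can be easily proved with the aid of the family tree of~$\mathbf{Z}$'' and omits the details. The dichotomy you identify (every vertex of~$\tau_{\mathbf{Z}}$ either lies in~$\tau'$ or is a skipped $I$-type child of a $\tau'$-vertex) is exactly what is needed both for the independence argument and for the ray-extraction step, so the proposal fills the omitted details in the intended way.
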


We now turn to the description of our second comparison process.  Again let~$\mathbf{Z}$ be a multi-type branching process with set of types~$\mathscr{T}$ and offspring distribution~$\p$. Let~$a_\star \in \mathscr{T}$ be a distinguished type, and assume that there exists a function~$\lambda:\mathscr{T} \to \mathbb{N}$ with~$\lambda({a_\star})=1$ and such that
\begin{equation}\label{eq:ineq_on_l}
\P(\mathbf{Z} \text{ survives} \mid \mathbf{Z}_0 = \mathbf{e}_a) \le \P(\mathbf{Z} \text{ survives} \mid \mathbf{Z}_0 = \lambda(a)\cdot \mathbf{e}_{a_\star}) \quad \text{ for all }a\in \mathscr{T},
\end{equation}
that is, the process has at least as high a chance of surviving when started from a population of~$\lambda(a)$ individuals of type~$a_\star$ than it would if started with a single individual of type~$a$. We now define a (one-type) branching process~$(Y_n)_{n \ge 0}$ with~$Y_0 = 1$ and offspring distribution equal to the law of
$$\sum_{a\in \mathscr{T}}\lambda(a)\cdot \chi(a), \quad \text{where }\chi \sim \p(a_\star,\cdot).$$
We then have:
\begin{lemma} \label{lem:second_comp}
Let~$\mathbf{Z}$,~$a_\star$,~$\lambda$ and~$(Y_n)$ be as above. If~$(Y_n)$ goes extinct with probability one,  then~$\mathbf{Z}$ started from~$\mathbf{Z}_0=\mathbf{e}_{a_\star}$ also goes extinct with probability one.
\end{lemma}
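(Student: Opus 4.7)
The plan is to reduce the claim to a standard property of the extinction probability of the one-type branching process $Y$, accessed through its offspring generating function. Let
\[
g(s) := \E\!\left[ s^{\sum_{a\in \mathscr{T}} \lambda(a)\chi(a)} \right] = \sum_{\eta \in (\N_0)^\mathscr{T}} \p(a_\star,\eta)\, s^{\sum_{a\in \mathscr{T}} \lambda(a)\eta(a)}, \qquad s\in[0,1],
\]
where $\chi\sim \p(a_\star,\cdot)$, so that $g$ is the probability generating function of $Y_1$. Set $\sigma := 1-\zeta(a_\star)$, the extinction probability of $\mathbf{Z}$ started from $\mathbf{e}_{a_\star}$. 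The whole argument hinges on showing $g(\sigma) \le \sigma$; once that is in hand, the hypothesis that $Y$ dies out almost surely will force $\sigma = 1$, and hence $\zeta(a_\star)=0$, which is what we want.

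To derive $g(\sigma)\le \sigma$, I would first combine the hypothesis \eqref{eq:ineq_on_l} with the identity \eqref{eq:def_of_zeta} applied to $\eta = \lambda(a)\cdot \mathbf{e}_{a_\star}$ to get
\[
\zeta(a) \le 1 - (1-\zeta(a_\star))^{\lambda(a)}, \qquad \text{equivalently}\qquad 1-\zeta(a) \ge \sigma^{\lambda(a)} \quad (a\in \mathscr{T}).
\]
Conditioning $\mathbf{Z}$ (started from $\mathbf{e}_{a_\star}$) on its first generation yields the standard multi-type survival recursion
\[
\sigma = \sum_{\eta \in (\N_0)^\mathscr{T}} \p(a_\star,\eta)\prod_{a\in \mathscr{T}} (1-\zeta(a))^{\eta(a)}.
\]
Substituting the pointwise bound $1-\zeta(a)\ge \sigma^{\lambda(a)}$ into this identity and recognising the resulting expression as the series defining $g(\sigma)$, I obtain $\sigma \ge g(\sigma)$, as desired. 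Note that $\lambda(a_\star)=1$ makes this substitution consistent: the $a_\star$-factor on the right-hand side is already $\sigma^{1}=\sigma^{\lambda(a_\star)}$, so no special treatment of the distinguished type is needed.

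The final step is classical one-type branching process theory. The extinction probability $q_Y$ of $Y$ is the smallest fixed point of $g$ in $[0,1]$, and $g$ is continuous, non-decreasing and convex on $[0,1]$ with $g(1)=1$. The hypothesis $q_Y=1$ combined with convexity forces $g(s)>s$ for all $s\in[0,1)$: indeed, if $g(0)=0$ then $\P(Y_1=0)=0$, so $Y$ would satisfy $Y_n\ge 1$ for all $n$ and could not die out, contradicting the hypothesis; hence $g(0)>0$, and then convexity and $g(1)=1$ rule out any further fixed point in $[0,1)$. Comparing with $g(\sigma)\le \sigma$ therefore forces $\sigma=1$, completing the proof. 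I do not anticipate any real obstacle beyond the generating-function manipulation above; the only subtlety, the degenerate case $g(0)=0$, is excluded directly by the hypothesis on $Y$.
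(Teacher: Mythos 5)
Your proof is correct and follows essentially the same route as the paper: both derive the key inequality $g(\sigma)\le\sigma$ from the one-step survival recursion and the hypothesis~\eqref{eq:ineq_on_l}, then conclude by one-type Galton--Watson theory. The only (cosmetic) difference is the endgame: the paper iterates the generating function $G^{(n)}$ and uses $Y_n\to 0$ a.s.\ to show the iterates tend to $1$, while you invoke directly the classical characterisation of the extinction probability as the smallest fixed point of the offspring generating function.
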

\begin{proof}
Recall the definition of~$\zeta$ from~\eqref{eq:def_of_zeta}, and define~$\bar{\zeta}(a) = 1-\zeta(a)$ for~$a \in \mathscr{T}$ and~$\bar{\zeta}(\eta) = 1 - \zeta(\eta)$ for~$\eta \in (\N_0)^\mathscr{T}$. Note that~\eqref{eq:ineq_on_l} can then be written
\begin{equation}\label{eq:zeta2}
\bar{\zeta}(a) \ge \bar{\zeta}(\lambda(a) \cdot \mathbf{e}_{a_\star}) \stackrel{\eqref{eq:def_of_zeta}}{=} \bar{\zeta}(a_\star)^{\lambda(a)}.
\end{equation}
With some abuse of notation, we lift~$\lambda$ to a function on~$(\N_0)^\mathscr{T}$ by letting
$$\lambda(\eta) := \sum_{a \in \mathscr{T}} \eta(a)\cdot \lambda(a),\quad \eta \in (\N_0)^\mathscr{T},$$
and note that, for any~$\eta$,
\begin{align*}
\bar{\zeta}(\eta) \stackrel{\eqref{eq:def_of_zeta}}{=} \prod_{a \in \mathscr{T}} (\bar{\zeta}(a))^{\eta(a)} \stackrel{\eqref{eq:zeta2}}{\ge} \prod_{a \in \mathscr{T}} (\bar{\zeta}(a_\star))^{\lambda(a)\cdot \eta(a)} = (\bar{\zeta}(a_\star))^{\lambda(\eta)}.
\end{align*}
We then have
\begin{equation}\label{eq:g_ineq}\bar{\zeta}(a_\star) = \mathbb{E}[\bar{\zeta}(\mathbf{Z}_1)\mid \mathbf{Z}_0 = \mathbf{e}_{a_\star}] \ge \mathbb{E}[(\bar{\zeta}(a_\star))^{\lambda(\mathbf{Z}_1)}\mid \mathbf{Z}_0 = \mathbf{e}_{a_\star}] = \mathbb{E}[(\bar{\zeta}(a_\star))^{Y_1}].\end{equation}

Let~$G(s) := \mathbb{E}[s^{Y_1}]$ be the probability generating function of the offspring distribution of~$(Y_n)$. It is easy to check recursively that
$$\mathbb{E}[s^{Y_n}] = G^{(n)}(s),$$
where~$G^{(n)}$ is the~$n$-fold composition of~$G$. Note that for~$s = \bar{\zeta}(a_\star)$,~\eqref{eq:g_ineq} gives~$s \ge G(s)$, which can be iterated (since~$G$ is non-decreasing) to~$s \ge G^{(n)}(s)$ for any~$n$. Hence,
$$\bar{\zeta}(a_\star) \ge G^{(n)}(\bar{\zeta}(a_\star)) = \E[(\bar{\zeta}(a_\star))^{Y_n}]  \stackrel{\eqref{eq:def_of_zeta}}{=} \E[\bar{\zeta}(Y_n\cdot \mathbf{e}_{a_\star})].$$
Now, if~$(Y_n)$ goes extinct almost surely, then~$Y_n \to 0$ almost surely as~$n \to \infty$, so  the right-hand side above converges to~$1$ as~$n \to \infty$, so~$\bar{\zeta}(a_\star) = 1$.
\end{proof}

We now give the final form of the comparison result that will be used in the following subsection. 
\begin{lemma}\label{lem:main_comparison_bp}
Let~$\mathbf{Z}=(\mathbf{Z}_n)_{n \ge 0}$ be a multi-type branching process with set of types~$\mathscr{T}$, offspring distribution~$\p$, and mean offspring matrix~$M$. Let~$a_\star \in \mathscr{T}$ and~$I \subset \mathscr{T}\backslash\{a_\star\}$, and assume that~$\mathbf{Z}_0 = \mathbf{e}_{a_\star}$.
\begin{itemize}
\item[(a)] If
\begin{equation}\label{eq:mean_of1}M(a_\star,a_\star) + \sum_{a \in I} M(a_\star,a)\cdot M(a,a_\star) > 1,\end{equation}
then~$\mathbf{Z}$ survives with positive probability;
\item[(b)] Let~$\lambda:\mathscr{T}\to \mathbb{N}$ be a function satisfying~$\lambda(a_\star)=1$ and~\eqref{eq:ineq_on_l}. Then,
\begin{equation}\label{eq:mean_of2}\sum_{a \notin I} M(a_\star,a)\cdot \lambda(a) + \sum_{a \in I} \sum_{b \in \mathscr{T}} M(a_\star,a)\cdot M(a,b)\cdot \lambda(b) < 1, \end{equation}
then~$\mathbf{Z}$ goes extinct almost surely.
\end{itemize}
\end{lemma}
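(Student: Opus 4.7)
The plan is to reduce both parts to the two preliminary lemmas of this subsection by passing to the comparison process associated with the set~$I$. Let~$\mathbf{X}$ be the process obtained from~$\mathbf{Z}$ via the construction preceding Lemma~\ref{lem:first_comp}, and let~$\tilde{M}$ denote its mean offspring matrix. Reading the offspring law of~$\mathbf{X}$ at~$a_\star$ and using linearity of expectation gives
$$\tilde{M}(a_\star,a)=M(a_\star,a)\,\mathbf{1}_{\{a\notin I\}}+\sum_{b\in I}M(a_\star,b)\,M(b,a),\qquad a\in\mathscr{T}.$$
By Lemma~\ref{lem:first_comp},~$\mathbf{X}$ and~$\mathbf{Z}$ have identical survival probabilities from every initial population, so it suffices to prove both conclusions with~$\mathbf{Z}$ replaced by~$\mathbf{X}$.

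For part~(a), note that~$a_\star\notin I$, hence~$\tilde{M}(a_\star,a_\star)$ coincides with the left-hand side of~\eqref{eq:mean_of1}. Consider the sub-tree of the family tree of~$\mathbf{X}$ (started from~$\mathbf{e}_{a_\star}$) consisting of those descendants of the root reached through a path all of whose vertices carry the mark~$a_\star$. By the branching property of~$\mathbf{X}$, its successive generations form a single-type Galton-Watson process whose offspring law is the marginal of~$\tilde{\p}(a_\star,\cdot)$ on the~$a_\star$-coordinate, and whose mean~$\tilde{M}(a_\star,a_\star)$ exceeds~$1$ by~\eqref{eq:mean_of1}. Standard single-type branching theory then gives that this sub-process survives with positive probability, so~$\mathbf{X}$ (and hence~$\mathbf{Z}$) does too.

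For part~(b), I would first observe that Lemma~\ref{lem:first_comp} transfers the hypothesis~\eqref{eq:ineq_on_l} from~$\mathbf{Z}$ to~$\mathbf{X}$ verbatim, so Lemma~\ref{lem:second_comp} can be applied to~$\mathbf{X}$ with the same distinguished type~$a_\star$ and the same function~$\lambda$. The resulting single-type process~$(Y_n)$ has offspring mean
$$\sum_{a\in\mathscr{T}}\lambda(a)\,\tilde{M}(a_\star,a)=\sum_{a\notin I}\lambda(a)\,M(a_\star,a)+\sum_{b\in I}M(a_\star,b)\sum_{a\in\mathscr{T}}M(b,a)\,\lambda(a),$$
obtained by inserting the formula above and exchanging the order of summation in the double sum; after relabeling~$a\leftrightarrow b$ in the second term one recognizes exactly the left-hand side of~\eqref{eq:mean_of2}. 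Under the hypothesis this mean is~$<1$, so~$(Y_n)$ is subcritical and dies out almost surely, and Lemma~\ref{lem:second_comp} concludes that~$\mathbf{X}$ (hence~$\mathbf{Z}$) started from~$\mathbf{e}_{a_\star}$ goes extinct almost surely.

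The whole argument is essentially bookkeeping once the contraction~$\mathbf{Z}\rightsquigarrow\mathbf{X}$ through the subset~$I$ is identified; the crucial insight is precisely that this contraction makes the ``two-step'' sums in~\eqref{eq:mean_of1} and~\eqml{eq:mean_of2} collapse into single-row quantities of~$\tilde{M}$, at which point the single-type reductions of part~(a) (direct subprocess) and part~(b) (Lemma~\ref{lem:second_comp}) can be invoked directly. Minor care is needed at two spots: checking that the pure-$a_\star$ subprocess in part~(a) is a genuine Galton-Watson process (immediate from the branching property of~$\mathbf{X}$), and that~\eqref{eq:ineq_on_l} survives the passage to~$\mathbf{X}$ in part~(b) (immediate from Lemma~\ref{lem:first_comp}).
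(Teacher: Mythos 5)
Your proof is correct and follows essentially the same route as the paper's: pass to the contracted process $\mathbf{X}$ of Lemma~\ref{lem:first_comp}, identify the left-hand sides of~\eqref{eq:mean_of1} and~\eqref{eq:mean_of2} as entries (and $\lambda$-weighted row sums) of $\tilde M$, and then reduce to a one-type subcritical/supercritical process (discarding non-$a_\star$ types for part (a), invoking Lemma~\ref{lem:second_comp} for part (b)). The explicit formula you write for $\tilde M(a_\star,\cdot)$ and the index relabeling are accurate bookkeeping that the paper leaves implicit.
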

\begin{proof}
For~(a), first define a process~$\mathbf{X}$ as the one of Lemma~\ref{lem:first_comp}, start it from a single individual of type~$a_\star$, and modify its offspring distribution~$\tilde{p}(a_\star,\cdot)$ so that all individuals of type different from~$a_\star$ are discarded. This gives rise to a one-type branching process which is smaller than~$\mathbf{X}$; its mean offspring is the expression on the left-hand side of~\eqref{eq:mean_of1}, so it is supercritical if this expression is larger than~1. This proves that~\eqref{eq:mean_of1} implies that~$\mathbf{X}$ survives with positive probability, so by Lemma~\ref{lem:first_comp}, the same holds for~$\mathbf{Z}$.

For~(b), take the same process~$\mathbf{X}$ as above. Note that, by Lemma~\ref{lem:first_comp}, condition~\eqref{eq:ineq_on_l} holds with~$\mathbf{X}$ in place of~$\mathbf{Z}$. We then define a one-type branching process process~$(W_n)_{n \ge 0}$ from~$\mathbf{X}$ in the same way that the process~$(Y_n)$ of  Lemma~\ref{lem:second_comp} is constructed from~$\mathbf{Z}$ (that is, in the offspring population, any individual of type~$a$ is replaced by~$\lambda(a)$ individuals of type~$a_\star$). Then, the mean offspring of~$(W_n)$ is equal to the left-hand side of~\eqref{eq:mean_of2}. If this number is smaller than~1, then~$(W_n)$ goes extinct with probability~1, so (by Lemma~\ref{lem:second_comp}) $\mathbf{X}$ goes extinct with probability~1, so (by Lemma~\ref{lem:first_comp})~$\mathbf{Z}$ goes extinct with probability~1.
\end{proof}
\subsection{First branching process representation of percolation cluster}\label{ss:first_bpropc}
We now go back to the study of our percolation model on~$\mathbb{T}_{d,k}$. For the rest of this section, we fix~$p$ and~$d$ with~$pd<1$, so that there is no percolation using only short edges.

We first introduce some notation and terminology. For a vertex~$v\in V$ define the function~$h$ to be its distance (in short edges) from the root:
\begin{align*}
h(o):=0,\qquad h((v_1,\ldots, v_m)):=m,
\end{align*}
and for a set~$A\subseteq V$ let
\begin{equation*} h(A):=\sup_{v\in A}h(v).\end{equation*}

For any~$v\in V$ let~$\mathbb{T}^v=(V^v, E^v)$ and~$\Gamma^v=(V_{\Gamma}^v, E_{\Gamma}^v)$ be the subgraphs of~$\mathbb{T}$ induced by the vertex sets
\begin{align}
V^v&=\{v\cdot s: s\in V\}, \label{eq:vt} \\
V_\Gamma^v&=\{v\cdot s: s\in V,\; h(s)<k\}, \label{eq:vgamma}
\end{align}
that is the subgraph rooted at~$v$ and the subgraph of height~$k-1$ rooted at~$v$.

We now define a property of sets of vertices that will be of interest in the following. 
\begin{definition}\label{def:admissible}
	A set~$B \subset V$  is called \emph{admissible} if there exists a (necessarily unique) vertex~$u \in B$ such that~$B \subset V^u_\Gamma$. In this case, we say that~$u$ is the \textit{base} of~$B$, and write~$u = \mathdutchcal{b}(B)$.
\end{definition}

In what follows, we let~$B \subset V$ be an admissible set. We define~$\mathscr{C}^B = \mathscr{C}^B(\omega)$ as the ``cluster of~$B$'', that is, the set of vertices that can be reached by an oriented open path from some vertex in~$B$ (with the understanding that~$B \subset \mathscr{C}^B$). Note that
\begin{equation}
\mathscr{C}^B = \bigcup_{x \in B}\mathscr{C}^{\{x\}}.
\label{eq:cluster_union}\end{equation}
The following inequality will be very useful in the sequel:
\begin{equation}\label{eq:prob_mon}
\mathbb{P}\left(|\mathscr{C}^B| < \infty\right) \ge \mathbb{P}\left( |\mathscr{C}^{\{o\}}| < \infty\right)^{|B|}.
\end{equation}
This can be seen from~\eqref{eq:cluster_union}, by revealing the clusters~$\mathscr{C}^{\{x\}}$, for~$x \in B$, one by one, and using the FKG inequality.

We now let~$\mathcal C_\s^B$ be the ``short cluster of~$B$'', that is, the set of vertices that can be reached by an oriented open path that starts from some vertex in~$B$ and uses only edges of~$E_\s$. We again adopt the convention that~$B \subset \mathcal{C}^B_\s$. Note that~$\mathcal C_\s^B\subseteq\mathscr C^B$. Now, suppose that we have revealed~$\mathcal{C}_{\s}^B$ (this involves querying short edges only) and, once this is done, we reveal all open long edges that start at some point of~$\mathcal{C}^B_{\s}$. Let~$\mathcal{C}^B_\ell$ be the set of vertices that are obtained as endpoints of these long edges. Equivalently,
\[
\mathcal C_\ell^B:=\left\{\begin{array}{l}  r\in\mathscr C^B\setminus\mathcal C_\s^B: r \text{ is the endpoint of some}\\ \text{open long edge started from }  \mathcal C_\s^B \end{array}\right\}.
\]
The following lemma is an easy consequence of the assumption that~$B$ is admissible, so we omit the proof. 
\begin{lemma}\label{lem:geom_dec}
	The following statements hold:
	\begin{itemize}
		\item[$\mathrm{(i)}$] for any~$u \in \mathcal{C}^B_\ell$, we have~$V^u \cap \mathcal{C}^B_s = \varnothing$, and
		\item[$\mathrm{(ii)}$] for any~$u,v \in \mathcal{C}^B_\ell$ with~$v \in V^u$, we have~$h(v)- h(u) < k$. 
	\end{itemize}
\end{lemma}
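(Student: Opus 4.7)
The plan is to extract from admissibility a clean height separation between $\mathcal{C}^B_\s$ and $\mathcal{C}^B_\ell$, and then exploit the fact that any two ancestors of a common vertex in the rooted tree are comparable along its unique ancestor chain. First I would gather the basic height inequalities. Admissibility gives $h(\mathdutchcal{b}(B)) \leq h(b) < h(\mathdutchcal{b}(B)) + k$ for every $b \in B$; since short edges only increase depth by one, any $w \in \mathcal{C}^B_\s$ is a descendant of some $b \in B$, so $h(w) \geq h(\mathdutchcal{b}(B))$; and any $u \in \mathcal{C}^B_\ell$ is of the form $u = w \cdot s$ with $w \in \mathcal{C}^B_\s$ and $s \in [d]^k$, hence $h(u) = h(w)+k \geq h(\mathdutchcal{b}(B))+k > h(b)$ for every $b \in B$. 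This height separation is the key quantitative input.

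For part (i), I would argue by contradiction: assume $v \in V^u \cap \mathcal{C}^B_\s$. Then $v$ has two ancestors in the tree, namely $u$ and a starting vertex $b \in B$ of an open short path ending at $v$. Since the ancestors of $v$ are linearly ordered, $u$ and $b$ are comparable, and the strict inequality $h(b) < h(u)$ from the previous paragraph forces $b$ to be a strict ancestor of $u$. But then the open short path from $b$ to $v$, being the unique oriented path through the ancestor chain of $v$, must pass through $u$, placing $u$ in $\mathcal{C}^B_\s$ and contradicting $u \in \mathcal{C}^B_\ell$.

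For part (ii), I would again argue by contradiction, supposing $h(v) - h(u) \geq k$. Writing $v = w\cdot s$ with $w \in \mathcal{C}^B_\s$ and $h(s)=k$, the vertex $w$ is the ancestor of $v$ at depth exactly $h(v)-k$. Since $u$ is an ancestor of $v$ at depth at most $h(v)-k$, either $u = w$, which immediately contradicts $u \in \mathcal{C}^B_\ell$, or $u$ is a strict ancestor of $w$. In the latter case, repeating the comparability argument of part (i) for the open short path from some $b \in B$ to $w$, the height separation forces $b$ strictly above $u$ on the ancestor chain of $w$; so this short path reaches $w$ via $u$, again placing $u \in \mathcal{C}^B_\s$ and yielding a contradiction.

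I do not anticipate a genuine obstacle: the substantive content of the lemma is the height separation between $B$ and $\mathcal{C}^B_\ell$ that admissibility forces, and once this is recorded both parts reduce to routine comparability of ancestors in a rooted tree.
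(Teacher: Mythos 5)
Your proof is correct. The paper actually omits the proof of this lemma (stating only that it is ``an easy consequence of the assumption that $B$ is admissible''), so there is no official argument to compare against; your write-up supplies exactly the intended elementary argument. The height separation $h(u) \geq h(\mathdutchcal{b}(B)) + k > h(b)$ for $u \in \mathcal{C}^B_\ell$ and $b \in B$, combined with linear ordering of ancestors in a rooted tree, does all the work: in~(i) it forces any short path from $B$ into $V^u$ to pass through $u$, contradicting $u \notin \mathcal{C}^B_\s$; and in~(ii), writing $v = w \cdot s$ with $w \in \mathcal{C}^B_\s$ and $h(s)=k$, the hypothesis $h(v)-h(u) \geq k$ forces $u$ to be an ancestor of $w$ (or equal to it), which then contradicts either $u \notin \mathcal{C}^B_\s$ or part~(i). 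One stylistic remark: in your proof of~(ii) you re-run the comparability argument by hand; it is slightly cleaner to simply cite part~(i) once you have placed $w \in V^u \cap \mathcal{C}^B_\s$, but this is cosmetic.
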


We will now decompose~$\mathcal{C}^B_\ell$ in a very useful manner. To do so, first declare that two vertices~$u,v \in \mathcal{C}^B_\ell$ are related if there exists~$w \in \mathcal{C}^B_\ell$ such that~$u,v \in V^w$; it is readily seen that this is an equivalence relation. Then let~$\mathcal{B}^B$ be the collection of equivalence classes induced by this relation.

\begin{lemma}\label{lem:decompositions}
	For any admissible set~$B$,
	\begin{itemize}
		\item[$\mathrm{(i)}$] every~$B' \in \mathcal{B}^B$ is admissible, and
		\item[$\mathrm{(ii)}$] the sets~$V^{\mathdutchcal{b}(B')}$, with~$B'$ ranging over~$\mathcal{B}^B$, are disjoint, and disjoint from~$\mathcal{C}^B_\s$.
\end{itemize} \end{lemma}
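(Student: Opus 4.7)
The natural candidate for the base of an equivalence class $B' \in \mathcal{B}^B$ is the (unique) vertex of minimum height in $B'$. I would first argue that such a vertex is well-defined, then invoke Lemma~\ref{lem:geom_dec}(ii) to conclude admissibility, and finally use the tree structure together with Lemma~\ref{lem:geom_dec}(i) to handle the disjointness statements.

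\textbf{Key observation for (i).} The definition of the equivalence relation yields the following closure property: if $u,v \in B'$ and $w \in \mathcal{C}^B_\ell$ is a common ancestor witnessing $u \sim v$, then $w \in \mathcal{C}^B_\ell$ is related to itself (trivially, via $w \in V^w$) and to $u$ (via $u, w \in V^w$), so in fact $w \in B'$. Using this I would show that $B'$ has a unique vertex $u^\star$ of minimum height: if $u_1, u_2 \in B'$ both realize the minimum, any witness $w \in B'$ of $u_1 \sim u_2$ satisfies $u_1, u_2 \in V^w$ (so $h(w) \le h(u_1)$) while $w \in B'$ forces $h(w) \ge h(u^\star) = h(u_1)$; hence $w = u_1 = u_2$. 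Setting $\mathdutchcal{b}(B') := u^\star$, for any $v \in B'$ the same closure argument shows that the witness of $u^\star \sim v$ must itself equal $u^\star$, so $v \in V^{u^\star}$; Lemma~\ref{lem:geom_dec}(ii) then gives $h(v) - h(u^\star) < k$, so $v \in V^{u^\star}_\Gamma$. This is (i).

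\textbf{Approach for (ii).} For the disjointness from $\mathcal{C}^B_\s$, I would simply note that $\mathdutchcal{b}(B') \in \mathcal{C}^B_\ell$, so Lemma~\ref{lem:geom_dec}(i) directly yields $V^{\mathdutchcal{b}(B')} \cap \mathcal{C}^B_\s = \varnothing$. For the mutual disjointness, suppose $B'_1 \neq B'_2$ in $\mathcal{B}^B$ and that some $z \in V^{u_1} \cap V^{u_2}$ exists, where $u_i := \mathdutchcal{b}(B'_i)$. Since in the rooted tree any two ancestors of $z$ are comparable, one of $u_1, u_2$ lies in $V$ of the other; but then $u_1, u_2$ share the ancestor (say $u_1$) inside $\mathcal{C}^B_\ell$, so $u_1 \sim u_2$ and hence $B'_1 = B'_2$, contradiction.

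\textbf{Main subtlety.} The only genuinely non-routine point is the well-definedness of $\mathdutchcal{b}(B')$, which really rests on the closure observation that any witness $w$ of a relation $u \sim v$ automatically belongs to the same class. Once this is in hand, both conclusions follow mechanically from Lemma~\ref{lem:geom_dec} and from the fact that the ancestor relation on a rooted tree is a partial order with pairwise comparable ancestors of any fixed vertex.
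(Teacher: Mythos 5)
Your proof is correct and follows essentially the same approach as the paper's, which simply asserts (without detail) that each class $B'$ has a unique $u_\star(B')\in B'$ with $B'\subset V^{u_\star(B')}$, that distinct classes give disjoint subtrees, and then appeals to Lemma~\ref{lem:geom_dec}. You fill in the details the paper omits, correctly identifying $u_\star(B')$ as the minimum-height element and using the closure property of witnesses to establish it.
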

\begin{proof}
	It readily follows from the definition of the  equivalence relation given above that for each~$B' \in \mathcal{B}^B$ there is a unique vertex~$u_\star(B') \in B'$ such that~$B'\subset V^{u_\star(B')}$, and also that if~$B', B'' \in \mathcal{B}^B$ are distinct, then~$V^{u_\star(B')}$ and~$V^{u_\star(B'')}$ are disjoint. The statements of the lemma readily follow from these considerations and Lemma~\ref{lem:geom_dec}.
\end{proof}

For an example of a short cluster~$\mathcal{C}_\s^o$ and the decomposition of~$\mathcal{C}_\ell^o$ see Figure~\ref{fig:23pqregq}.

\begin{figure}[htb]
\begin{center}
{\includegraphics[width = \textwidth]{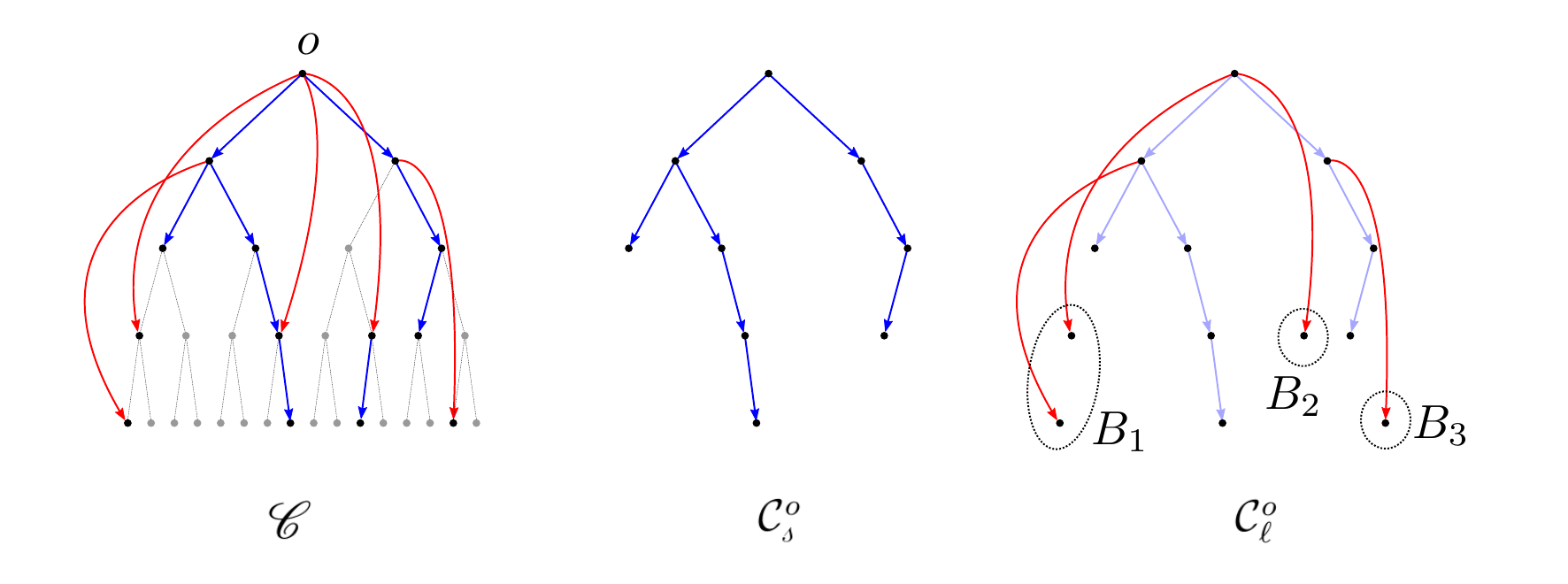}}
\end{center}
\caption{An example of $\mathscr{C}^o, \mathcal{C}_\s^o$ and~$\mathcal{C}_\ell^o$ on~$\mathbb{T}_{2, 3}$. In this case~$\mathcal{B}^o=\{B_1, B_2, B_3\}$ }
\label{fig:23pqregq}
\end{figure}

\begin{definition}
	Let~$B \subset V$ be admissible, with base~$u = \mathdutchcal{b}(B)$.  The \textit{type} of~$B$ is defined as $$\dt(B):= \{v \in V_\Gamma^{o}: u \cdot v \in B\},$$
that is,~$\dt(B)$ is the set of vertices which must be concatenated with~$u$ in order to produce~$B$. 
 The set~$\tpb$ of possible types is the set of all admissible sets with base equal to the  root~$o$,
	$$\tpb = \{A \subset V_\Gamma^o: o \in A\}.$$
\end{definition}
The set~$\tpb$ will be the space of types of the multi-type branching process that we will soon define. We emphasize that, although in Sections~\ref{sect:branchingproc} and~\ref{ss:further_mtbp} we represented types by lowercase letters~($a$,~$b$), here we will represent them by uppercase letters~($A$,~$B$), since they are themselves sets.

With the above definition at hand, we can now state:
\begin{corollary}\label{cor:spatial_markov}
	Let~$B \subset V$ be admissible. Condition on~$\mathcal{C}_\s^B$ and~$\mathcal{C}^B_\ell$ (so that~$\mathcal{B}^B$ is also determined). Then, the clusters
	$$\mathscr{C}^{B'}: B' \in \mathcal{B}^B$$
	are independent. Moreover, if~$B' \in \mathcal{B}^B$ with~$\mathdutchcal{b}(B') = u$ and~$\dt(B') = A$, then the distribution of
	$$\{v:u \cdot v \in \mathscr{C}^{B'}\}$$ 
	is equal to the (unconditioned) distribution of~$\mathscr{C}^{A}$.
\end{corollary}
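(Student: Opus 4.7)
The plan is to establish two assertions: (a) conditional on $(\mathcal{C}_\s^B, \mathcal{C}_\ell^B)$, each cluster $\mathscr{C}^{B'}$ is a function of the statuses of edges drawn from pairwise disjoint families of as-yet-unqueried edges, whence independence of Bernoulli edges yields the conditional independence; and (b) after translating back to the root, the law of $\mathscr{C}^{B'}$ coincides with the unconditional law of $\mathscr{C}^{A}$ by translation invariance of the model.

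For (a), first I would record the geometric observation that, because every edge of $\mathbb{T}$ points away from the root, any oriented open path starting from a vertex $x$ remains in $V^x$. Consequently $\mathscr{C}^{B'} \subset V^{\mathdutchcal{b}(B')}$ for each $B' \in \mathcal{B}^B$, and $\mathscr{C}^{B'}$ depends only on the statuses of edges with tail in $V^{\mathdutchcal{b}(B')}$. By Lemma~\ref{lem:decompositions}(ii) the sets $V^{\mathdutchcal{b}(B')}$ are pairwise disjoint and each disjoint from $\mathcal{C}_\s^B$. The key bookkeeping step is that revealing $\mathcal{C}_\s^B$ inspects only short edges with tail in $\mathcal{C}_\s^B$, and revealing $\mathcal{C}_\ell^B$ (given $\mathcal{C}_\s^B$) inspects only long edges with tail in $\mathcal{C}_\s^B$. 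Thus no edge with tail in any $V^{\mathdutchcal{b}(B')}$ has been queried, so the edge families driving different $\mathscr{C}^{B'}$ are pairwise disjoint and jointly independent of $(\mathcal{C}_\s^B, \mathcal{C}_\ell^B)$.

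For (b), fix $B'$ with $u := \mathdutchcal{b}(B')$ and $A := \dt(B')$, so that $B' = \{u \cdot v : v \in A\}$. The shift $\Phi \colon V \to V^u$, $\Phi(v) = u \cdot v$, is a bijection preserving orientation and the short/long partition of edges. Since the family $(\omega(e))_{e \in E}$ is i.i.d.\ on $E_\s$ and i.i.d.\ on $E_\ell$, the pushforward of the percolation via $\Phi^{-1}$ restricted to $E^u$ has exactly the law of the original percolation. Under $\Phi^{-1}$, the cluster $\mathscr{C}^{B'}$ (which lives inside $V^u$ and is determined by edges with tails in $V^u$) corresponds precisely to the cluster $\mathscr{C}^A$ in the original tree. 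Combined with (a), this equality of laws persists under the conditioning.

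The only real obstacle is the bookkeeping in (a): one must verify carefully that the two-stage revelation of $\mathcal{C}_\s^B$ and then $\mathcal{C}_\ell^B$ never touches an edge with tail in some $V^{\mathdutchcal{b}(B')}$. Once this is granted, the corollary reduces to independence of Bernoulli edges together with the translation invariance of $\mathbb{T}$ under vertex concatenation.
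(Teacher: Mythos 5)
Your proposal is correct and follows essentially the same route as the paper: both argue that the event $\{\mathcal{C}_\s^B = U, \mathcal{C}_\ell^B = U'\}$ is determined by edges with tail in $U$, then invoke Lemma~\ref{lem:decompositions} to see that these edge families are disjoint from the subtrees $V^{\mathdutchcal{b}(B')}$ (and the latter are pairwise disjoint), and finally conclude the distributional claim by the translation invariance of the Bernoulli percolation under the shift $v \mapsto u \cdot v$.
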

\begin{proof}
	Fix~$U, U'\subset V$ for which the event~$\{ \mathcal{C}^B_\s = 
	U,\;\mathcal{C}^B_\ell = U' \}$ has positive probability, and condition on this event. Note that its occurrence can be decided by inspecting the statuses of all edges, short and long, that begin in~$U$. By Lemma~\ref{lem:decompositions}, none of these edges belong to any of the subtrees induced by the vertex sets
	$$V^{\mathdutchcal{b}(B')}:\;B' \in \mathcal{B}^B.$$
	The claimed independence then follows from the fact, also given in Lemma~\ref{lem:decompositions}, that these subtrees are disjoint. The claimed equality of distributions follows from the simple observation that the distribution of~$\{\omega(\langle u\cdot v,u \cdot v' \rangle): \langle v,v'\rangle \in E\}$ is equal to that of~$\{\omega(\langle v,v'  \rangle): \langle v,v'\rangle \in E\}$, for any~$u \in V$.
\end{proof}

We now apply Lemma~\ref{lem:decompositions} and Corollary~\ref{cor:spatial_markov} recursively to obtain important consequences. Letting~$B \subset V$ be admissible, define the collections~$\mathcal{Z}^B_0,\mathcal{Z}^B_1,\ldots$ of admissible sets recursively by letting~$\mathcal{Z}^B_0 = \{B\}$ and
$$\mathcal{Z}^B_{n+1} = \bigcup_{B'\in \mathcal{Z}^B_n} \mathcal{B}^{B'},\quad n \ge 0.$$
Then, Lemma~\ref{lem:decompositions} gives that  the sets 
$$V^{\mathdutchcal{b}(B')}: B' \in \mathcal{Z}^B_n,\;n \in \N_0$$
are pairwise disjoint. Moreover, we have the disjoint union
\begin{equation}\label{eq:cor_exp_bp}
\mathscr{C}^B = \bigcup_{n=0}^\infty \; \bigcup_{B' \in \mathcal{Z}^B_n}\mathcal{C}^{B'}_s.
\end{equation}
For each~$n \in \mathbb{N}_0$, we now define~${\mathbf{Z}}_n\in (\mathbb{N}_0)^{\tpb}$ by
$${\mathbf{Z}}_n(A) := |\{B' \in \mathcal{Z}^B_n: \dt(B') = A \}|, \quad A \in \tpb,$$
that is,~${\mathbf{Z}}_n(A)$ is the number of sets in~$\mathcal{Z}_n$ that have type~$A$. This gives a stochastic process~$\mathbf{Z}=({\mathbf{Z}}_n)_{n \ge 0}$ with state space~$\mathbb{N}_0^{\mathscr{A}}$, and as a consequence of Corollary~\ref{cor:spatial_markov} we obtain:
\begin{corollary}\label{cor:bp_app}
	The process~${\mathbf{Z}} = ({\mathbf{Z}}_n)_{n \ge 0}$ is a multi-type branching process with set of types equal to~$\mathscr{A}$, and initial population consisting of one individual with type~$\dt(B)$.
\end{corollary}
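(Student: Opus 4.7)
The plan is to verify directly the two defining properties of a multi-type branching process on $\tpb$: (i) $\mathbf{Z}_0$ is a single individual of the announced type, and (ii) given $\mathbf{Z}_0,\ldots,\mathbf{Z}_n$, the population $\mathbf{Z}_{n+1}$ decomposes as a sum of conditionally independent offspring contributions, one per individual of $\mathbf{Z}_n$, whose distribution depends only on that individual's type. Point (i) is immediate, since $\mathcal{Z}_0^B = \{B\}$ consists of a single admissible set with $\dt(B)$ as its type.

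For the branching/Markov property I would proceed by induction on $n$, using Corollary~\ref{cor:spatial_markov} as the main tool. Let $\mathcal{F}_n$ be the $\sigma$-algebra generated by the short clusters $\mathcal{C}_\s^{B'}$ together with the long clusters $\mathcal{C}_\ell^{B'}$ for all $B' \in \bigcup_{m \le n}\mathcal{Z}_m^B$. By construction, $\mathcal{F}_n$ determines $\mathbf{Z}_0,\ldots,\mathbf{Z}_{n+1}$, as well as the base and type of every element of $\mathcal{Z}_{n+1}^B$. Iterating Lemma~\ref{lem:decompositions}, the subtrees $V^{\mathdutchcal{b}(B'')}$, $B'' \in \mathcal{Z}_{n+1}^B$, are pairwise disjoint and disjoint from the already-revealed regions. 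Applying Corollary~\ref{cor:spatial_markov} to every $B' \in \mathcal{Z}_n^B$ in parallel then yields that the clusters $\{\mathscr{C}^{B''}: B'' \in \mathcal{Z}_{n+1}^B\}$ are conditionally independent given $\mathcal{F}_n$, and that the translated law of each $\mathscr{C}^{B''}$ coincides with the unconditional law of $\mathscr{C}^{\dt(B'')}$.

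For each admissible $C$, define $\xi^C \in (\N_0)^{\tpb}$ by $\xi^C(A') := |\{C' \in \mathcal{B}^C : \dt(C') = A'\}|$. From the previous step, conditional on $\mathcal{F}_n$ the random vectors $\{\xi^{B''} : B'' \in \mathcal{Z}_{n+1}^B\}$ are independent, and the law of $\xi^{B''}$ depends on $\mathcal{F}_n$ only through $\dt(B'')$. Since
\[
\mathbf{Z}_{n+2} \;=\; \sum_{B'' \in \mathcal{Z}_{n+1}^B} \xi^{B''},
\]
this is exactly the transition rule of a multi-type branching process on $\tpb$ with offspring distribution $\p(A,\cdot)$ equal to the law of $\xi^C$ for any admissible $C$ with $\dt(C) = A$ (well-defined by translation invariance of $\mathbb{P}_{p,q}$).

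The only point requiring genuine care—and the main potential obstacle—is the \emph{simultaneous} application of Corollary~\ref{cor:spatial_markov} across the several parents $B' \in \mathcal{Z}_n^B$ at once: independence within the offspring of one parent comes straight from the corollary, but joint independence across distinct parents must be deduced from the fact, supplied by iterating Lemma~\ref{lem:decompositions}, that the subtrees $V^{\mathdutchcal{b}(B'')}$ are pairwise disjoint, so that the edge configurations that will be inspected to produce the next generation live in disjoint portions of $\mathbb{T}$ and are therefore independent by product structure of $\mathbb{P}_{p,q}$.
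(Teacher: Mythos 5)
Your proof is correct and follows the same route the paper intends: the paper states Corollary~\ref{cor:bp_app} as an unproved consequence of Corollary~\ref{cor:spatial_markov}, and your inductive argument, with the filtration $\mathcal{F}_n$ and the disjointness of the subtrees $V^{\mathdutchcal{b}(B'')}$ supplied by iterating Lemma~\ref{lem:decompositions}, is precisely the omitted verification. You also correctly identify and resolve the one non-trivial point (joint independence of offspring across distinct parents in generation $n$), which indeed rests on the product structure of $\mathbb{P}_{p,q}$ over disjoint edge sets rather than on a literal application of Corollary~\ref{cor:spatial_markov} to a single parent.
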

We observe that, due to our assumption that~$pd < 1$, almost surely all the short clusters that appear on the right-hand side of~\eqref{eq:cor_exp_bp} are finite. It then readily follows that
\begin{equation}
\label{eq:inf_iff}
\mathbb{P}(|\mathscr{C}^B| = \infty) = \mathbb{P}({\mathbf{Z}}\text{ survives}) = \zeta(B).
\end{equation}
It is easy to see that the mean offspring matrix~$M$ of~$\mathbf{Z}$, defined as in~\eqref{eq:matrix_M}, satisfies the conditions~\eqref{eq:not_rw} and~\eqref{eq:pos}, since regardless of~$B$, already in generation~1 all types can occur with positive probability.

\subsection{Analysis of mean offspring matrix}
Our interest is now in studying the behavior of~$q_c= q_c(k,p,d)$ as~$k\to \infty$. Specifically, we parametrize
\begin{equation}\label{eq:param_q}
q = \frac{1-pd}{d^k} + \frac{s}{d^{2k}},
\end{equation}
define
\begin{equation*}
\sst = \frac{(1-pd)^2p^2d}{1-p^2d},
\end{equation*}
and will prove that, on the one hand, there is no percolation when~$s < s_\star$ and~$k$ is large, and on the other hand, there is percolation when~$s > s_\star$ and~$k$ is large. Although most of the definitions we give below will depend on~$p,d,k$ and~$q$ (or~$s$), we will omit this dependence from the notation.

\begin{proposition} \label{prop:scomp} 
	If~$s > \sst$, then for~$k$ large enough we have
	\begin{equation}\label{eq:in_prop_a}
	M(\{o\},\{o\}) + \sum_{B:|B|=2} M(\{o\},B)\cdot M(B,\{o\}) > 1.
	\end{equation}
	If~$s < \sst$, then for~$k$ large enough we have
	\begin{equation} \label{eq:in_prop_b}
	\sum_{B:|B|\neq 2} M(\{o\},B) \cdot |B| + \sum_{B:|B| = 2} \sum_{B'} M(\{o\},B)\cdot M(B,B')\cdot |B'|  < 1. 
	\end{equation}
\end{proposition}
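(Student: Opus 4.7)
I apply Lemma~\ref{lem:main_comparison_bp} with distinguished type $a_\star = \{o\}$, index set $I = \{B \in \mathscr{A}: |B|=2\}$, and $\lambda(B) = |B|$; the hypothesis~\eqref{eq:ineq_on_l} for this $\lambda$ follows from~\eqref{eq:prob_mon}. Verifying~\eqref{eq:in_prop_a} for $s > s_\star$ and~\eqref{eq:in_prop_b} for $s < s_\star$ (with $k$ large) then reduces to asymptotic estimates of the entries of $M$ as $k \to \infty$, with $q$ given by~\eqref{eq:param_q}.

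I would begin by recording two useful identities. Since each offspring class contributes its cardinality to $|\mathcal{C}_\ell^B|$, one has $\sum_{B' \in \mathscr{A}} M(B, B')\cdot|B'| = \E[|\mathcal{C}_\ell^B|]$ for any admissible $B$; this rewrites~\eqref{eq:in_prop_b} as
\[
\E[|\mathcal{C}_\ell^{\{o\}}|] + \sum_{|B|=2} M(\{o\}, B)\bigl(\E[|\mathcal{C}_\ell^B|] - 2\bigr) < 1.
\]
Also, letting $N_n$ denote the number of size-$n$ equivalence classes in $\mathcal{C}_\ell^{\{o\}}$, we have $M(\{o\}, \{o\}) = \E[\text{\# tops}] - \sum_{n \ge 2}\E[N_n]$, and $\E[|\mathcal{C}_\ell^{\{o\}}|] - \E[\text{\# tops}] = \sum_{n \ge 2}(n-1)\E[N_n]$. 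The required asymptotics can therefore be extracted from the five basic quantities $\E[|\mathcal{C}_\ell^{\{o\}}|]$, $\E[\text{\# tops}]$, $M(\{o\}, \{o,v\})$, $\E[|\mathcal{C}_\ell^{\{o,v\}}|]$, and $M(\{o,v\},\{o\})$.

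I would compute these by summing over depths $j+k$ of the long-edge endpoints, using that the short cluster of a single vertex has geometric depth profile with rate $pd$. The first three can be written as explicit generating sums: $\E[|\mathcal{C}_\ell^{\{o\}}|] = \tfrac{d^kq(1-p^k)}{1-pd} = 1 + \tfrac{s}{(1-pd)d^k} + o(1/d^k)$; $\E[\text{\# tops}] = \tfrac{d^kq}{1-pd(1-q)} + o(1/d^k) = 1 + \tfrac{s - pd(1-pd)}{(1-pd)d^k} + o(1/d^k)$, the extra factor $(1-q)^j$ per summand arising from the $j$ potential ancestor-endpoint long edges being closed; and for $v$ at depth $m$, $M(\{o\}, \{o,v\}) = \tfrac{p^m q^2 d^k}{1-pd} + o(1/d^k)$. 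Summing in $v$ by means of the geometric series $\sum_m(pd)^m$ and $\sum_m(p^2d)^m$ (both convergent since $pd < 1$ entails $p^2d < 1$) yields $\sum_v M(\{o\},\{o,v\}) = \tfrac{pd}{d^k} + o(1/d^k)$ and $\sum_v M(\{o\},\{o,v\})\cdot p^{h(v)} = \tfrac{s_\star}{(1-pd)d^k} + o(1/d^k)$, the latter being how the constant $s_\star$ enters. The remaining two quantities satisfy $\E[|\mathcal{C}_\ell^{\{o,v\}}|] = M(\{o,v\},\{o\}) + o(1) = 2 - p^{h(v)} + o(1)$, where $2 - p^{h(v)}$ comes from inclusion–exclusion on $\mathcal{C}_\s^{\{o\}} \cup \mathcal{C}_\s^{\{v\}}$. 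Combining $\E[N_2] = \tfrac{pd}{d^k}+o(1/d^k)$ (computed directly) with $\sum_{n\ge 2}(n-1)\E[N_n] = \tfrac{pd}{d^k}+o(1/d^k)$ and nonnegativity of each $\E[N_n]$ forces $\E[N_n] = o(1/d^k)$ for $n \ge 3$. Substituting everything, both~\eqref{eq:in_prop_a} and~\eqref{eq:in_prop_b} reduce — with the $2pd/d^k$ terms cancelling — to $1 + \tfrac{s - s_\star}{(1-pd)d^k} + o(1/d^k)$, so the desired strict inequalities hold for $s$ strictly above or below $s_\star$ and $k$ sufficiently large.

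The main technical obstacle is verifying that the various ``isolation'' correction factors — the $(1-q)^j$ for the absence of ancestor endpoints, the $(1-q)^{d^{m'}}$ for the absence of further endpoints in $V^y$, the $(1-p^k)$ arising from $y \notin \mathcal{C}_\s^{\{o\}}$, and the short-cluster-overlap terms — contribute only at order $o(1/d^k)$ and not at the critical scale $1/d^k$. Individually such factors can be of order $1$ when $m'$ is close to $k$ (since $d^{m'}q$ need not be small there), but the geometric decay of $(pd)^m$ and $(p^2 d)^m$ in the outer sums automatically suppresses this problematic large-$m$ region, and the clean identity $\sum_{n\ge 2}(n-1)\E[N_n] = \E[|\mathcal{C}_\ell^{\{o\}}|]-\E[\text{\# tops}]$ handles the size-$\ge 3$ contributions without need for direct combinatorics on triple configurations.
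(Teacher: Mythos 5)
Your overall route matches the paper's: parametrize $q=(1-pd)d^{-k}+sd^{-2k}$, expand the entries of $M$ to order $d^{-k}$, and show that only types of cardinality $1$ and $2$ matter, reducing the left-hand sides of~\eqref{eq:in_prop_a} and~\eqref{eq:in_prop_b} to $1+(s-\sst)/((1-pd)d^k)+o(d^{-k})$. The individual asymptotics you quote --- $\E[|\mathcal{C}_\ell^{\{o\}}|]=1+\tfrac{s}{(1-pd)d^k}+o(d^{-k})$, $d^kM(\{o\},\{o,v\})\to(1-pd)p^{h(v)}$, $M(\{o,v\},\{o\})\to 2-p^{h(v)}$ --- are correct and are exactly what the paper's Lemmas~\ref{lem:use_height} and~\ref{lem:card2} establish. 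Where you diverge from the paper is in the bookkeeping: you compute $\E[\#\text{tops}]=\sum_A M(\{o\},A)$ with an isolation factor $(1-q)^j$, and you replace the paper's direct combinatorial estimate $d^k\sum_{|A|\ge 3}\barm(A)|A|^2\to 0$ (Lemma~\ref{lem:three_more}) by the counting identity $\sum_{n\ge 2}(n-1)\E[N_n]=\E[|\mathcal{C}_\ell^{\{o\}}|]-\E[\#\text{tops}]$.

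It is at that second step that I see a genuine gap. The identity gives $\E[N_2]+\sum_{n\ge 3}(n-1)\E[N_n]=pd\,d^{-k}+o(d^{-k})$, and to extract $\sum_{n\ge 3}(n-1)\E[N_n]=o(d^{-k})$ you need a matching \emph{lower} bound $\E[N_2]\ge pd\,d^{-k}+o(d^{-k})$. But what you actually compute ``directly'' --- summing $M(\{o\},\{o,v\})\approx \tfrac{p^{h(v)}q^2d^k}{1-pd}$ --- is really $\sum_{|A|=2}\barm(A)$, which \emph{over}-counts: $\barm$ drops the requirements that $v$ be a basepoint and that $\mathcal{C}_\ell^{\{o\}}\cap V^v_\Gamma$ be exactly the pair, so it yields only the upper bound $\E[N_2]\le\sum_{|A|=2}\barm(A)=pd\,d^{-k}+o(d^{-k})$. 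The deficit $\sum_{|A|=2}\barm(A)-\E[N_2]$ is precisely a contribution from equivalence classes of size $\ge 3$ --- the very quantity the subtraction identity was supposed to control --- so as sketched the argument is circular. To close it you must either compute $\E[N_2]$ honestly, conditioning on the isolation of the pair inside $V^v_\Gamma$ and proving those corrections are $o(d^{-k})$ uniformly in $h(v)$, or prove the size-$\ge 3$ estimate directly as the paper does (which is then also used to pin down $M(\{o\},A)$ for $|A|=2$ via~\eqref{eq:for_ma2}). Either way, the content you defer to ``the main technical obstacle'' is the substance of the proof, not a routine verification. A smaller remark: the formula $\E[\#\text{tops}]=\tfrac{d^kq}{1-pd(1-q)}+o(d^{-k})$ itself requires justifying that the correction from open ancestor long edges whose endpoint already lies in $\mathcal{C}_\s^{\{o\}}$ is $o(d^{-k})$; this does hold (it is $O(p^k)$, and $pd<1$ gives $p^k=o(d^{-k})$), but needs to be said.
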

We postpone the proof of this result, and for now show how it almost immediately gives Theorem~\ref{thm:qck}.

\begin{proof}[{Proof of {Theorem~\ref{thm:qck}}.}] Recall from~\eqref{eq:inf_iff} that percolation occurs with positive probability if and only if~$\mathbf{Z}$ has positive probability of surviving.  The result then follows from combining Lemma~\ref{lem:main_comparison_bp} with Proposition~\ref{prop:scomp}.
\end{proof}

In the rest of this section, we prove Proposition~\ref{prop:scomp}. Several preliminary results will be needed. We start with some simple observations.
\begin{lemma}
If~$k$ is large enough, then for any~$A \in \tpb$ we have
\begin{equation}\label{eq:simple_bound}
\sum_{A'} M(A,A')\cdot |A'| \le 2|A|\quad \text{for all }A \in \tpb.
\end{equation}
\end{lemma}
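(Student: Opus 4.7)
The plan is to recognize the left-hand side of~\eqref{eq:simple_bound} as the expected cardinality of~$\mathcal{C}^B_\ell$ for any admissible~$B$ of type~$A$. Indeed, since the equivalence classes in~$\mathcal{B}^B$ partition $\mathcal{C}^B_\ell$, we have $|\mathcal{C}^B_\ell| = \sum_{B' \in \mathcal{B}^B} |B'|$, and grouping the $B'$'s by type yields
$$\mathbb{E}[|\mathcal{C}^B_\ell|] \;=\; \sum_{A' \in \tpb} |A'| \cdot \mathbb{E}\bigl[\#\{B' \in \mathcal{B}^B : \dt(B') = A'\}\bigr] \;=\; \sum_{A' \in \tpb} M(A, A')\cdot |A'|,$$
using also that $|B|=|A|$ via the obvious bijection $v \mapsto \mathdutchcal{b}(B)\cdot v$. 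So it suffices to bound $\mathbb{E}[|\mathcal{C}^B_\ell|]$ by $2|B|$ for large $k$.

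Next, I would upper bound $|\mathcal{C}^B_\ell|$ by the total number of open long edges emanating from $\mathcal{C}^B_\s$, since each vertex of $\mathcal{C}^B_\ell$ must be the endpoint of at least one such edge. Because $\mathcal{C}^B_\s$ is determined by short edges only, and long edges are independent of short ones, conditioning on $\mathcal{C}^B_\s$ turns this count into a sum of independent $\mathrm{Bin}(d^k,q)$ variables indexed by $\mathcal{C}^B_\s$, of mean $d^k q \cdot \mathbb{E}[|\mathcal{C}^B_\s|]$. A union bound and the standard subcritical Galton–Watson estimate give
$$\mathbb{E}[|\mathcal{C}^B_\s|] \;\le\; |B|\cdot \mathbb{E}[|\mathcal{C}^{\{o\}}_\s|] \;\le\; \frac{|B|}{1-pd},$$
the latter inequality because $\mathcal{C}^{\{o\}}_\s$ is dominated by a Galton–Watson tree with $\mathrm{Bin}(d,p)$ offspring and $pd < 1$.

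Combining these two estimates and recalling~\eqref{eq:param_q},
$$\mathbb{E}[|\mathcal{C}^B_\ell|] \;\le\; \frac{d^k q}{1-pd}\cdot |B| \;=\; \left(1 + \frac{s}{(1-pd)d^k}\right)\cdot |B|.$$
Since the bracketed factor tends to $1$ as $k \to \infty$, it is bounded by $2$ uniformly in $A$ for all sufficiently large $k$, yielding~\eqref{eq:simple_bound}. Nothing in this argument is delicate; the only thing to be careful about is using the independence of short and long edges to separate the two stages of the exploration, but this is built into the product structure of~$\mathbb{P}_{p,q}$.
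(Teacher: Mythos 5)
Your proof is correct and follows essentially the same route as the paper: identify the left-hand side as $\mathbb{E}[|\mathcal{C}^B_\ell|]$, bound it by $qd^k\cdot\mathbb{E}[|\mathcal{C}^B_\s|]$, use $\mathbb{E}[|\mathcal{C}^B_\s|]\le |A|/(1-pd)$ from the subcritical short-edge GW process, and observe that $qd^k/(1-pd)\to 1$ under the parametrization~\eqref{eq:param_q}. You spell out a couple of intermediate justifications (the partition of $\mathcal{C}^B_\ell$ by $\mathcal{B}^B$, the conditioning argument for the long-edge count) that the paper leaves implicit, but the argument is the same.
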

\begin{proof}
For any~$A \in \tpb$ we have
\begin{equation}\label{eq:bound_all_sum}
\sum_{A'}{M}(A,A')\cdot |A'| = \mathbb{E}\left[|\mathcal{C}_\ell(A)|\right] \le qd^k\cdot \mathbb{E}\left[|\mathcal{C}_{\s}(A)|\right].
\end{equation}
Next, note that
$$\mathbb{E}\left[|\mathcal{C}_\s(A)|\right] \le |A|\cdot \mathbb{E}\left[|\mathcal{C}_\s(\{o\})|\right] = \frac{|A|}{1-pd}$$
Using~\eqref{eq:param_q}, the desired inequality then holds for~$k$ large enough that~$qd^k/(1-pd) < 2$,
\end{proof}

Next, fix~$A \in \tpb$. We observe that~$M(\{o\},A)$ is equal to the expected number of vertices~$v$ satisfying the following properties:
\begin{itemize}
\item[$\mathrm{(i)}$] $v \in \mathcal{C}_\ell(\{o\})$;
\item[$\mathrm{(ii)}$] $v$ is the unique vertex of~$\mathcal{C}_\ell(\{o\})$ in the path (of short edges) from~$o$ to~$v$;
\item[$\mathrm{(iii)}$] we have the equality of sets $$  \mathcal{C}_\ell(\{o\}) \cap V^v = \{v \cdot u:\; u \in A\}.$$
\end{itemize} 
We now define a quantity~$\barm(A)$ as the expected number of vertices satisfying a weaker list of requirements (specifically, we remove requirement~$\mathrm{(ii)}$ and replace the set equality in~$\mathrm{(iii)}$ by a set inclusion):
\begin{definition}
Given~$A \in \tpb$, let~$\barm(A)$ be the expected number of vertices~$v \in \mathcal{C}_\ell(\{o\})$ such that
$$\mathcal{C}_\ell(\{o\}) \cap V^v \supseteq \{v\cdot u:u\in A\}.$$
\end{definition}
Note in particular that for~$A = \{o\}$, we have
\begin{equation*}\barm(\{o\}) = \mathbb{E}[|\mathcal{C}_\ell|].\label{eq:card_ell}\end{equation*}
Also note that~$M(\{o\},A) \le \barm(A)$ for all~$A$, and a moment's thought about the two quantities leads to the equality\begin{equation}\label{eq:first_barm}
M(\{o\},\{o\}) = \barm(\{o\}) - \sum_{A:|A| \ge 2} M(\{o\},A)\cdot |A|.
\end{equation}

We now state three lemmas that we will need for the proof of Proposition~\ref{prop:scomp}.

\begin{lemma}[Formula for~$\bar{M}$]\label{lem:use_height}
	For any~$A \in \tpb$ we have
	\begin{equation}\label{eq:barm_bound}
	\barm(A) =  (1-p^{k-h(A)})\cdot \frac{d^k\cdot q^{|A|} \cdot p^{h(A)}}{1-pd}.
	\end{equation}
In particular, we have
\begin{equation} \label{eq:particular_o}
d^k\cdot (\bar{M}(\{o\}) - 1) \xrightarrow{k \to \infty} \frac{s}{1-pd}
\end{equation}
and, for~$A \in \tpb$ with~$|A| = 2$,
\begin{equation}\label{eq:for_ma1}\lim_{k \to \infty} d^k\cdot \barm(A) = (1-pd)\cdot p^{h(A)}.\end{equation}
\end{lemma}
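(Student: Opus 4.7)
The plan is to compute $\barm(A)$ directly by summing, over each candidate vertex $v$, the probability that $v$ contributes to the event defining $\barm(A)$, and then extract the two asymptotic limits by substituting~\eqref{eq:param_q}. For $v$ to contribute, $v$ must lie in $\mathcal{C}_\ell(\{o\})$, which already forces $h(v)\ge k$; write $n=h(v)$, $m=n-k\ge 0$, and let $e_1,\dots,e_n$ be the short edges along the (unique) path $o\to v$. For each $u\in A$, write $\pi_j(v)$ for the ancestor of $v$ at height $j$ and observe that, because $h(u)<k$, the long edge ending at $v\cdot u$ originates at $\pi_{m+h(u)}(v)$, which is an ancestor of $v$.

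The key step is a careful factorization of the contribution of a fixed $v$ into independent pieces. The requirements split into: \textbf{(i)} for each $u\in A$, the long edge from $\pi_{m+h(u)}(v)$ to $v\cdot u$ is open (these $|A|$ long edges are distinct since their endpoints are, contributing $q^{|A|}$); \textbf{(ii)} each such ancestor lies in $\mathcal{C}_\s$, which after taking the union over $u\in A$ amounts to $e_1,\dots,e_{m+h(A)}$ all being open, contributing $p^{m+h(A)}$; and \textbf{(iii)} $v\notin\mathcal{C}_\s$, meaning that at least one of the remaining $k-h(A)$ short edges $e_{m+h(A)+1},\dots,e_n$ is closed, contributing $1-p^{k-h(A)}$. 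A small but crucial observation is that condition~(iii) for $v$ already forces $v\cdot u\notin\mathcal{C}_\s$ for every $u\in A$, since the path of short edges from $o$ to $v\cdot u$ must pass through $v$; no additional constraint from $v\cdot u\in\mathcal{C}_\ell$ is needed. Since the edges in (i), (ii), (iii) are pairwise disjoint, the three events are independent, and $v$ contributes $p^{m+h(A)}q^{|A|}(1-p^{k-h(A)})$. Multiplying by the $d^n$ vertices of height $n$ and summing $\sum_{n\ge k}(dp)^n=(dp)^k/(1-dp)$ gives~\eqref{eq:barm_bound}.

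The asymptotic limits~\eqref{eq:particular_o} and~\eqref{eq:for_ma1} then follow by substituting $q=(1-pd)/d^k+s/d^{2k}$. For~\eqref{eq:particular_o}, with $|A|=1$ and $h(A)=0$, one obtains $\barm(\{o\})=(1-p^k)\bigl(1+s/((1-pd)d^k)\bigr)$, hence $d^k(\barm(\{o\})-1)=-(dp)^k+(1-p^k)\,s/(1-pd)\to s/(1-pd)$ as $k\to\infty$, since both $pd<1$ and $p<1$. For~\eqref{eq:for_ma1}, with $|A|=2$, expand $d^{2k}q^2=(1-pd)^2+O(d^{-k})$, so $d^k\barm(A)=(1-p^{k-h(A)})\,d^{2k}q^2\,p^{h(A)}/(1-pd)\to(1-pd)\,p^{h(A)}$.

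The main potential pitfall is step~(iii): one must verify that the single constraint $v\notin\mathcal{C}_\s$ captures all the exclusions implicit in $v\cdot u\in\mathcal{C}_\ell$ for $u\in A$, and that the short edges used in~(ii) and~(iii) partition the initial segment of the path $o\to v$ into two disjoint blocks so that independence applies. Once the per-vertex probability is correctly factored, the rest reduces to a geometric sum and a Taylor expansion in $1/d^k$.
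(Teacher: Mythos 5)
Your proof is correct and follows essentially the same route as the paper: you compute the per-vertex contribution by factoring the event $\{v\cdot u : u\in A\}\subset\mathcal{C}_\ell(\{o\})$ into the three independent pieces (ancestors in $\mathcal{C}_\s$ via the initial block of short edges, $v\notin\mathcal{C}_\s$ via the remaining block, and the $|A|$ distinct long edges), then sum the geometric series and substitute the parametrization of $q$. The ``crucial observation'' you flag --- that $v\notin\mathcal{C}_\s$ already forces $v\cdot u\notin\mathcal{C}_\s$ for all $u\in A$ --- is precisely the point the paper uses implicitly when stating condition (2) of its iff-decomposition.
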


\begin{lemma}[Estimate for types with cardinality above~2]\label{lem:three_more}
	We have
	\begin{equation}\label{eq:boundthree}
	d^k\sum_{A:|A|\ge 3} \barm(A) \cdot |A|^2 \xrightarrow{k \to \infty} 0.
	\end{equation}
\end{lemma}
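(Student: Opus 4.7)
The plan is to exploit the explicit formula from Lemma~\ref{lem:use_height}, namely $\barm(A) \leq \frac{d^k q^{|A|} p^{h(A)}}{1-pd}$, and to organize the sum by the two natural statistics $j = |A|$ and $H = h(A)$. Since every $A \in \tpb$ contains the root $o$, the remaining $j-1$ vertices must be chosen among the $S_H := d + d^2 + \cdots + d^H \leq 2 d^H$ elements of $V^o_\Gamma$ at heights $1, \ldots, H$; hence the number of admissible $A$ with $(|A|,h(A)) = (j,H)$ is at most $\binom{S_H}{j-1}$. Combining these estimates,
$$d^k \sum_{|A|\geq 3} \barm(A)\, |A|^2 \;\leq\; \frac{d^{2k}}{1-pd} \sum_{H=1}^{k-1} p^H \sum_{j \geq 3} \binom{2 d^H}{j-1} q^j j^2.$$

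The next step is to handle the inner sum over $j$. Writing $N = 2 d^H$, the crucial observation is that $q = O(1/d^k)$ under the parametrization~\eqref{eq:param_q}, so $Nq \leq 4 d^{H-k} \leq 4/d$ uniformly for $H \leq k-1$. Combining the bound $\binom{N}{m} \leq \binom{N}{2}\, N^{m-2}/(m-2)!$ (valid for $m \geq 2$) with the convergence of $\sum_{\ell \geq 0}(Nq)^\ell (\ell+3)^2/\ell!$ for bounded $Nq$, a short calculation yields
$$\sum_{j \geq 3}\binom{N}{j-1} q^j j^2 \;=\; q\sum_{m \geq 2}\binom{N}{m} q^m (m+1)^2 \;=\; O(N^2 q^3).$$
Plugging in $N^2 \leq 4 d^{2H}$ and $q^3 = O(1/d^{3k})$ then gives
$$d^k \sum_{|A| \geq 3} \barm(A)\, |A|^2 \;\leq\; \frac{C}{(1-pd)\, d^k} \sum_{H=1}^{k-1} (pd^2)^H.$$

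It remains to show that this last expression tends to $0$. A direct evaluation of the geometric sum produces a bound of order $O(1/d^k)$ if $pd^2 < 1$, of order $O(k/d^k)$ if $pd^2 = 1$, and of order $O((pd^2)^k/d^k) = O((pd)^k)$ if $pd^2 > 1$, each of which vanishes as $k \to \infty$. Notably, in the supercritical case $pd^2 > 1$ it is the standing hypothesis $pd < 1$ that provides the decay, through the algebraic identity $(pd^2)^k/d^k = (pd)^k$.

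The main obstacle I anticipate is precisely this last case. For $pd^2 > 1$ the count $\binom{S_H}{j-1}$ of admissible sets grows rapidly in $H$, so the desired estimate hinges on a delicate balance between this combinatorial growth and the pointwise decay of $\barm(A)$; without the exact cancellation $(pd^2)^k/d^k = (pd)^k$ afforded by $pd<1$, a cruder estimate would not suffice. The middle generating-function step is routine, following from $(1+q)^N \leq e^{Nq}$ combined with the elementary comparison $\binom{N}{m} \leq \binom{N}{2}\, N^{m-2}/(m-2)!$ for $m\geq 2$.
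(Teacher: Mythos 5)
Your argument is correct and follows essentially the same strategy as the paper's proof: the explicit formula from Lemma~\ref{lem:use_height}, a binomial count of admissible types indexed by cardinality and height, and geometric decay ultimately supplied by $pd<1$. The only real difference is the order of the double sum: you sum over the cardinality $j=|A|$ first (extracting the $N^2q^3$ factor via $\binom{N}{m}\le N^m/(2(m-2)!)$) and then over the height $H=h(A)$, which yields a single geometric ratio $pd^2$ and avoids the paper's $m$-dependent ratios $pd^{m-1}$ and the attendant crude factor of $(k-1)$; the paper sums in the reverse order and pays for it with that extra factor of $k-1$, which it then absorbs using $q^3d^{2k}=O(d^{-k})$ and $d^{2k}qp^k=O((pd)^k)$ — the same cancellations you identify.
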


\begin{lemma}[Estimate for types with cardinality~2] \label{lem:card2} Let~$A \in \tpb$ with~$|A| = 2$. We have
	\begin{equation}\label{eq:conv_of_two}
	d^k \cdot M(\{o\},A) \xrightarrow{k \to\infty} (1-pd)\cdot p^{h(A)}.
	\end{equation}
Moreover,
\begin{equation}
\label{eq:upper_bound_card2}
\sum_{A'}M(A,A')\cdot |A'| \le \frac{q\cdot d^k \cdot (2-p^{h(A)})}{1-pd}
\end{equation}
and
\begin{equation}\label{eq:conv_from_two}
	M(A,\{o\}) \xrightarrow{k \to \infty} 2-p^{h(A)}
	\end{equation}
\end{lemma}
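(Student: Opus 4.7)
The lemma has three parts, which I would prove in the order~\eqref{eq:upper_bound_card2}, \eqref{eq:conv_from_two}, \eqref{eq:conv_of_two}, writing throughout $A=\{o,u_0\}$ with $h(u_0)=h(A)\ge 1$. For~\eqref{eq:upper_bound_card2}, I observe that every vertex of $\mathcal{C}_\ell^A$ is the endpoint of an open long edge whose source lies in $\mathcal{C}_\s^A$, and each vertex of $V$ is the source of exactly $d^k$ long edges, each open with probability $q$ independently of everything else. This gives $\sum_{A'}M(A,A')|A'|=\mathbb{E}[|\mathcal{C}_\ell^A|]\le qd^k\,\mathbb{E}[|\mathcal{C}_\s^A|]$. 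Using $\mathcal{C}_\s^A=\mathcal{C}_\s^{\{o\}}\cup\mathcal{C}_\s^{\{u_0\}}$, inclusion-exclusion combined with the Galton--Watson identity $\mathbb{E}[|\mathcal{C}_\s^{\{v\}}|]=(1-pd)^{-1}$ and the observation $\mathcal{C}_\s^{\{o\}}\cap\mathcal{C}_\s^{\{u_0\}}=\mathcal{C}_\s^{\{o\}}\cap V^{u_0}$ (since any short path $o\to v$ with $v\in V^{u_0}$ must pass through $u_0$) yields $\mathbb{E}[|\mathcal{C}_\s^A|]=(2-p^{h(A)})/(1-pd)$, finishing~\eqref{eq:upper_bound_card2}.

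For~\eqref{eq:conv_from_two}, I use the identity $M(A,\{o\})=\mathbb{E}[|\mathcal{C}_\ell^A|]-\sum_{|A''|\ge 2}M(A,A'')\,|A''|$, which follows from $|\mathcal{C}_\ell^A|=\sum_{B'\in\mathcal{B}^A}|B'|$. For the first term I refine the previous estimate by writing $\mathbb{P}(v\in\mathcal{C}_\ell^A)=q\,[\mathbb{P}(\alpha_v\in\mathcal{C}_\s^A)-\mathbb{P}(\alpha_v,v\in\mathcal{C}_\s^A)]$, where $\alpha_v$ denotes the ancestor of $v$ at tree distance $k$; the correction term sums to $O(q)=o(1)$, and together with $qd^k\to 1-pd$ (from~\eqref{eq:param_q}) this gives $\mathbb{E}[|\mathcal{C}_\ell^A|]\to 2-p^{h(A)}$. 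For the remaining sum, I would bound $\sum_{|A''|\ge 2}M(A,A'')|A''|$ above by the expected number of ordered pairs of related vertices in $\mathcal{C}_\ell^A$. By Lemma~\ref{lem:geom_dec}(ii), any such pair is confined within distance $k$; splitting into the case where one vertex is an ancestor of the other (two open long edges, a bound of order $kq^2d^k=O(k/d^k)$) and the case where the pair has a strict common ancestor in $\mathcal{C}_\ell^A$ (three open long edges with nested sources contributing geometric $p^{\cdot}$ factors, yielding a bound of order $q^3d^k=O(d^{-2k})$) shows both contributions vanish.

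For~\eqref{eq:conv_of_two}, the upper bound $d^kM(\{o\},A)\le d^k\barm(A)\to(1-pd)p^{h(A)}$ is immediate from~\eqref{eq:for_ma1}. For the lower bound I decompose the defect $\barm(A)-M(\{o\},A)$ according to why a vertex $v$ with $\{v\cdot u:u\in A\}\subseteq\mathcal{C}_\ell$ fails to be the base of an equivalence class of type exactly~$A$: either (i)~$\mathcal{C}_\ell\cap V^v$ strictly contains $\{v\cdot u:u\in A\}$, in which case $v$ is counted by $\barm(A'')$ for some $A''\in\tpb$ with $A''\supsetneq A$ (so $|A''|\ge 3$), and the total contribution is $o(d^{-k})$ by Lemma~\ref{lem:three_more}; or (ii)~$v$ has a strict ancestor in $\mathcal{C}_\ell$, requiring three open long edges, and the same nested-source short-path analysis as in the previous paragraph bounds this contribution by $O(q^3d^k)=O(d^{-2k})=o(d^{-k})$. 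Hence $d^kM(\{o\},A)=d^k\barm(A)+o(1)\to(1-pd)p^{h(A)}$.

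The main obstacle is the control of the non-singleton sum in part~\eqref{eq:conv_from_two}: the pair-counting must delicately balance the combinatorial growth from enumerating pairs of vertices against the fast decay in $q$ and the short-path attenuation $p^{\cdot}$ arising from requiring multiple nested sources to lie in $\mathcal{C}_\s^A$. The tools from Lemmas~\ref{lem:geom_dec} and~\ref{lem:use_height} are precisely what enable this bookkeeping.
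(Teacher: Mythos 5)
Your proof is correct and follows essentially the same route as the paper: both use the identity $\mathbb{E}[|\mathcal{C}_\s^A|]=(2-p^{h(A)})/(1-pd)$ obtained by inclusion–exclusion, bound $\sum_{A'}M(A,A')|A'|$ by $qd^k\,\mathbb{E}[|\mathcal{C}_\s^A|]$, and control the defect $\barm(A)-M(\{o\},A)$ using Lemma~\ref{lem:three_more}. There are two organizational differences worth noting. For~\eqref{eq:conv_of_two}, the paper collapses your cases (i) and (ii) into one step via the pair-counting bound $\barm(A)-M(\{o\},A)\le\sum_{|A'|\ge 3}\binom{|A'|}{2}M(\{o\},A')$: any defect vertex $v$ gives the pair $\{v,v\cdot u_0\}$ sitting inside an equivalence class of size at least three, whether the third vertex is an extra descendant or a strict ancestor; so your case~(ii), which you estimate separately with a three-long-edge bound, is already absorbed. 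Your version works, but the paper's is a bit slicker. For~\eqref{eq:conv_from_two}, the paper merely asserts the tail estimate $qd^k\,\mathbb{E}[|\mathcal{C}_\s^A|]-M(A,\{o\})\to 0$ and omits the details; your ordered-pair decomposition (ancestor–descendant vs.\ strict common ancestor in $\mathcal{C}_\ell^A$) is a legitimate way to fill that gap, although your claimed order $O(kq^2d^k)$ for the first sub-case is coarser than necessary---the geometric attenuation $\sum_j(pd)^j=O(1)$ already yields $O(q^2d^k)$---but both tend to zero, so the conclusion is unaffected.
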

We postpone the proofs of these three lemmas, and for now we show how they can be combined to prove Proposition~\ref{prop:scomp}.

\begin{proof}[Proof of {{Proposition~\ref{prop:scomp}}}]
Using~\eqref{eq:first_barm} and~\eqref{eq:particular_o} we have
\[M(\{o\},\{o\}) = 1 + \frac{s}{(1-pd)d^k} - \sum_{A:|A| \ge 2}M(\{o\},A)\cdot |A| + o \left(\frac{1}{d^k}\right).\]
Combining this with the inequality~$M(\{o\},A) \le \barm(A)$ and Lemma~\ref{lem:three_more}, we obtain
	\begin{equation}\label{eq:int_ek} M(\{o\},\{o\}) = 1 + \frac{s}{(1-pd)d^k} - 2\sum_{A:|A| = 2} M(\{o\},A) + o\left(\frac{1}{d^k} \right). \end{equation}

	With this at hand, the left-hand side of~\eqref{eq:in_prop_a} can be written
	\begin{equation}\label{eq:square_br}
	1+\frac{1}{d^k} \left[\frac{s}{1-pd} + \sum_{A:|A| = 2} d^k \cdot M(\{o\},A) \cdot (M(A,\{o\})-2) + o(1)\right].
	\end{equation}
	By~\eqref{eq:conv_of_two} and~\eqref{eq:conv_from_two}, for any~$A$ with~$|A|=2$ we have
	$$d^k \cdot M(\{o\},A) \cdot (M(A,\{o\})-2) \xrightarrow{k \to \infty}-(1-pd)\cdot p^{2h(A)}.$$
	Moreover, using~$M(\{o\},A) \le \bar{M}(A)$ again, together with~\eqref{eq:simple_bound} and~\eqref{eq:barm_bound}, we have the domination
	\begin{align*}&\sum_{A:|A|=2} \left| d^k \cdot M(\{o\},A)\cdot (M(A,\{o\})-2)\right|\\&\le\sum_{A:|A| = 2} d^k \cdot \frac{d^k\cdot q^2\cdot  p^{h(A)}}{1-pd}\cdot  (2|A|-2) = \frac{2d^{2k}}{1-pd}\cdot \left(\frac{1-pd}{d^k} + \frac{s}{d^{2k}}\right)^2 \cdot \sum_{h=1}^{k-1} (pd)^h, \end{align*}
	which is bounded from above uniformly in~$k$, by the choice of~$q$ {in \eqref{eq:param_q}} and~$pd < 1$. Hence, by dominated convergence, the term inside the square brackets in~\eqref{eq:square_br} converges to
	\begin{equation}\label{eq:lim_sqbk}\frac{s}{1-pd} + \sum_{h=1}^\infty d^h\cdot \left(-(1-pd)\cdot p^{2h}\right) = \frac{s}{1-pd} - (1-pd)\cdot \frac{p^2d}{1-p^2d}.\end{equation}
	This is strictly positive when~$s > \sst$, so the expression in~\eqref{eq:square_br} is larger than one when~$s > \sst$ and~$k$ is large enough.
	
	Next, by Lemma~\ref{lem:three_more}, the left-hand side of~\eqref{eq:in_prop_b} is
\[ M(\{o\},\{o\}) + \sum_{B:|B| = 2} \sum_{B'} M(\{o\},B)\cdot M(B,B')\cdot |B'| + \frac{1}{d^k}\cdot o(1). \]
Again using~\eqref{eq:int_ek}, this is equal to
$$
	1+\frac{1}{d^k} \left[\frac{s}{1-pd} + \sum_{A:|A| = 2} d^k \cdot M(\{o\},A) \cdot \left(-2+ \sum_{A'} M(A,A')\cdot |A'|\right) + o(1)\right].$$
Using~\eqref{eq:upper_bound_card2}, this is bounded from above by
\begin{align}\label{eq:helpful_sqb}
 1+\frac{1}{d^k} \left[\frac{s}{1-pd} + \sum_{A:|A| = 2} d^k \cdot M(\{o\},A) \cdot \left(-2+ \frac{q\cdot d^k\cdot (2-p^{h(A)})}{1-pd}\right) + o(1)\right].
	\end{align}
Using the fact that~$q = (1-pd)d^{-k} + sd^{-2k}$ and~\eqref{eq:conv_of_two}, we have, for any~$A$ with~$|A|=2$:
$$d^k \cdot M(\{o\},A) \cdot \left(-2+\frac{q\cdot d^k\cdot (2-p^{h(A)})}{1-pd}\right) \xrightarrow{k \to \infty} - (1-pd)\cdot p^{2h(A)}. $$
	Arguing using dominated convergence as before, the term inside the square brackets in~\eqref{eq:helpful_sqb} converges to the same limit as in~\eqref{eq:lim_sqbk}, which is negative when~$s<\sst$. This completes the proof.
\end{proof}

We now turn to the proofs of the three lemmas.

\begin{proof}[Proof of {{Lemma~\ref{lem:use_height}}}]
	Fix~$A \in \tpb$ and~$v \in V$ with~$h(v) \ge k$. Write~$v = (v_1,\ldots,v_n)$, with~$n = h(v)$, and let~$v_{(i)}:= (v_1,\ldots, v_i)$ for~$1 \le i \le n$. The event that~$\{v\cdot u: u\in A\} \subset \mathcal{C}_\ell(\{o\})$ occurs if and only if:
	\begin{enumerate}
		\item vertices~$v_{(1)},\ldots, v_{(n-k+h(A))}$ are all in~$\mathcal{C}_\s(\{o\})$;
		\item vertex~$v$ is not in~$\mathcal{C}_{\s}(\{o\})$;
		\item all the long edges starting from the set~$\{o,v_{(1)},\ldots, v_{(n-k+h(A))}\}$ and ending at the set~$\{v\cdot u:u \in A\}$ are open.
	\end{enumerate}
	This has probability~$p^{h(v)+h(A)-k}\cdot (1-p^{k-h(A)})\cdot q^{|A|}$. The result follows from summing this over all~$v$ with~$h(v) \ge k$:
\begin{align*}
&\sum_{v:h(v)\ge k}p^{h(v)+h(A)-k}\cdot (1-p^{k-h(A)})\cdot q^{|A|} \\&= (1-p^{k-h(A)})\cdot q^{|A|}\cdot p^{h(A)-k}\cdot \sum_{h =k}^\infty d^hp^h =  (1-p^{k-h(A)})\cdot \frac{d^k\cdot q^{|A|}\cdot  p^{h(A)}}{1-pd}.
\end{align*}
This proves~\eqref{eq:barm_bound}.

For~\eqref{eq:particular_o}, we combine~\eqref{eq:barm_bound} with the equality~$q = (1-pd)d^{-k}+ sd^{-2k}$ to get
\begin{align*}
d^k\cdot (\barm(\{o\}) - 1) &= d^k \cdot \left((1-p^k)\cdot \frac{d^k}{1-pd}\cdot \left(\frac{1-pd}{d^k}+\frac{s}{d^{2k}}\right)-1 \right)\\
&= d^k \cdot \left(\frac{s}{d^k\cdot (1-pd)} - p^k - \frac{s\cdot p^k}{d^k\cdot (1-pd)}\right) \xrightarrow{k \to \infty} \frac{s}{1-pd},
\end{align*}
since~$p^k = o(1/d^k)$.

Similarly, for~\eqref{eq:for_ma1}, for~$|A| = 2$ we use~$d^{2k}\cdot q^2 \to (1-pd)^2$ to obtain
$$d^k \cdot \barm(A) = d^k \cdot (1-p^{k-h(A)}) \cdot \frac{d^k \cdot q^2\cdot p^{h(A)}}{1-pd} \xrightarrow{k \to \infty} (1-pd)\cdot p^{h(A)}.$$
\end{proof}

\begin{proof}[{Proof of {Lemma~\ref{lem:three_more}}} ]
	For~$m,h \in \mathbb{N}$, let~$\eta(m,h)$ denote the number of sets~$A \in \tpb$ with~$|A| = m$ and~$h(A) = h$. Keeping in mind that any~$A \in \tpb$ contains~$\{o\}$, we bound
	$$\eta(m,h) \le {d+\cdots +d^h \choose m-1} \le \frac{(d+\cdots + d^h)^{m-1}}{(m-1)!} \le \frac{d^{(h+1)(m-1)}}{(m-1)!},$$
	since~$(d^{h+1}-1)/(d-1) \le d^{h+1}$. Then, for each~$m \in \mathbb{N}$,
	\begin{align*}
	\sum_{h=1}^{k-1}\; \sum_{\substack{A:|A|=m,\\h(A)=h}} \barm(A) &\stackrel{\eqref{eq:barm_bound}}{\le} \sum_{h=1}^{k-1} \eta(m,h) \cdot \frac{d^k q^m p^h}{1-pd} \le \frac{d^kq^m\sum_{h=1}^{k-1} (pd^{m-1})^{h+1}}{p(1-pd) (m-1)!}. 
	\end{align*}
	We can bound the sum in the numerator by~$k-1$ in case~$pd^{m-1} \le 1$ and by~$(k-1)(pd^{m-1})^{k}$ otherwise, and then, bounding a maximum by a sum,
	$$\sum_{h=1}^{k-1} (pd^{m-1})^{h+1} \le (k-1)\cdot(1+(pd^{m-1})^k).$$
	We then obtain
	\begin{align*}
&	d^k \sum_{A:|A| \ge 3}\barm(A) \cdot |A|^2 \le \frac{(k-1)d^{2k}}{p(1-pd)} \sum_{m \ge 3} \frac{m^2 q^m}{ (m-1)!} \cdot \left(1 + p^kd^{k(m-1)}\right)\\[.2cm]
	&\le \frac{(k-1)d^{2k}q^3}{p(1-pd)}\sum_{m\ge 3}\frac{m^2}{(m-1)!} + \frac{(k-1)d^{2k}qp^k}{p(1-pd)}\sum_{m\ge 3}\frac{m^2}{(m-1)!}\cdot (qd^k)^{m-1}. 
	\end{align*}
	Using~$q = O\left(\frac{1}{d^k}\right)$,~$p^k = o\left(\frac{1}{d^k}\right)$ and~$\sum_{m\ge 3}\frac{m^2}{(m-1)!} < \infty$, we see that this tends to zero as~$k \to \infty$.
\end{proof}

\begin{proof}[{Proof of {Lemma~\ref{lem:card2}}}] Fix~$A$ with~$|A| = 2$. We note that, similarly to~\eqref{eq:first_barm}, we have
\begin{equation}\label{eq:for_ma2}\barm(A) \ge M(\{o\},A) \ge \barm(A) - \sum_{A':|A'| \ge 3} {|A'| \choose 2}\cdot M(\{o\},A'). \end{equation}
	The convergence~\eqref{eq:conv_of_two} now follows from combining this with~\eqref{eq:for_ma1} and~\eqref{eq:boundthree}.

To prove the remaining statements, we write~$A = \{o,u\}$ and compute
	\begin{align*}
	\mathbb{E}\left[|\mathcal{C}_\s(A)|\right] &= \mathbb{E}\left[|\mathcal{C}_\s(\{o\})|\right] + \mathbb{E}\left[|\mathcal{C}_\s(\{u\})|\right] - \sum_{v \in V^u}\mathbb{P}\left(v \in \mathcal{C}_\s(\{o\})\right) \\&= \frac{2-p^{h(u)}}{1-pd} =\frac{2-p^{h(A)}}{1-pd}.
	\end{align*}
We then have
$$M(A,\{o\}) \le\sum_{A'} M(A,A')\cdot |A'| \le q d^k\cdot \E[|\mathcal{C}_\s(A)|] = q d^k \cdot \frac{2-p^{h(A)}}{1-pd} .$$
This already proves~\eqref{eq:upper_bound_card2}. For~\eqref{eq:conv_from_two}, we need two additional statements. First,
$$qd^k \cdot \E[|\mathcal{C}_\s(A)|]  \xrightarrow{k \to \infty} 2-p^{h(A)}.$$
This is an immediate consequence of~$q = (1-pd)d^{-k} + 2d^{-2k}$. Second,
$$qd^k \cdot \E[|\mathcal{C}_\s(A)|] - M(A,\{o\}) \xrightarrow{k \to \infty} 0.$$
This is proved by  an argument similar to the proof of Lemma~\ref{lem:three_more}: the expected number of open long edges that start at~$\mathcal{C}_\s(A)$, but do not contribute to the value of~$\mathbf{Z}_1({\{o\}})$ (either because they also end at~$\mathcal{C}_\s(A)$, or because they  end at a vertex belonging to a set of~$\mathcal{B}^A$ which is not of type~$\{o\}$) tends to zero as~$k \to \infty$. We omit the details. 
\end{proof}

\section{Limit theorems}\label{sect:limitthm}
In this section, we prove Theorem~\ref{thm:limits}. We will first recall some limit theorems from the literature of multi-type branching processes. We will then give a second branching process representation of the percolation cluster, and readily obtain the results for the three regimes as stated in Theorem~\ref{thm:limits}.
\subsection{Limit theorems  for multi-type branching processes} \label{ss:limit_theorems}
 As in Section~\ref{sect:branchingproc}, we consider a multi-type branching process~$(\mathbf{Z}_n)_{n \ge 0}$ with set of types~$\mathscr{T}$, offspring distribution~$\p$ and mean offspring matrix~$M$ with Perron-Frobenius eigenvalue~$\rho$. We continue assuming that conditions~\eqref{eq:not_rw} and~\eqref{eq:pos} hold.
 We also let~$\mu,\nu \in (\N_0)^\mathscr{T}$ respectively denote left and right eigenvectors of~$M$ corresponding to~$\rho$, that is,
\begin{equation}\label{eq:evs}\sum_{a \in \mathscr{T}} \mu(a)\cdot M(a,b) = \rho \cdot \mu(b)\;\; \forall b \in \mathscr{T},\qquad \sum_{b \in \mathscr{T}} M(a,b) \cdot \nu(b) = \rho \cdot \nu(a) \;\; \forall a \in \mathscr{T};\end{equation}
these eigenvectors are uniquely determined if we require the normalization conditions
\begin{equation}\label{eq:normalization}
\sum_{a \in \mathscr{T}} \mu(a) = 1,\qquad \sum_{a \in \mathscr{T}} \mu(a) \cdot \nu(a) = 1.
\end{equation}
Moreover, it follows from the Perron-Frobenius Theorem and condition~\eqref{eq:pos} that~$\mu(a) > 0$ and~$\nu(a) > 0$ for all~$a \in \mathscr{T}$.

The following result  is contained in Theorem~1 in Chapter~V, Section 6 of \cite{AN72}, concerning the supercritical regime:
\begin{theorem}\label{thm:supercritAN}
Assume that~$\rho > 1$ and 
\[
 \sum_\eta \eta(b) \cdot \log(\eta(b))\cdot \p(a,\eta)<\infty \text{ for all } a, b\in \mathscr{T}. 
\]
We then have
	\begin{equation}\label{eq:log}
	\lim_{n\rightarrow\infty}\frac{\mathbf{Z}_n}{\rho^n}=Z \cdot \nu \text{ almost surely},
	\end{equation}
	where~$Z$ is a nonnegative random variable such that~$\P(Z>0)>0$.
\end{theorem}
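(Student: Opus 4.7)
The plan is to follow the classical Kesten--Stigum theorem for multi-type branching processes. Let $\mathcal{F}_n = \sigma(\mathbf{Z}_0,\ldots,\mathbf{Z}_n)$ and, using the left Perron--Frobenius eigenvector $\mu$ normalised as in \eqref{eq:normalization}, define
\[
W_n := \rho^{-n}\sum_{a \in \mathscr{T}} \mu(a)\,\mathbf{Z}_n(a).
\]
The branching property gives $\E[\mathbf{Z}_{n+1}(b)\mid\mathcal{F}_n]=\sum_a \mathbf{Z}_n(a)M(a,b)$, and combining with the left-eigenvector identity $\sum_a\mu(a)M(a,b)=\rho\,\mu(b)$ from \eqref{eq:evs} shows that $(W_n)$ is a nonnegative $\mathcal{F}_n$-martingale. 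In particular it converges $\P$-almost surely to some nonnegative random variable $W$.

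The core of the proof is to show that $W$ is nondegenerate, i.e.\ $\P(W>0)>0$, equivalently that $(W_n)$ is uniformly integrable so that $\E[W]=W_0>0$. This is where the $X\log X$ condition enters. The cleanest modern route is the size-biased tree construction of Lyons--Pemantle--Peres: build a random family tree with a distinguished spine along which offspring are sampled from the $\nu$-size-biased version of $\p$, the next spine vertex being chosen among the offspring with probability proportional to $\nu$. A direct computation identifies $W_n$ as the Radon--Nikodym derivative, on $\mathcal{F}_n$, of the size-biased law $\tilde{\P}$ with respect to the original law of $\mathbf{Z}$. A Kakutani-type dichotomy for Radon--Nikodym derivatives along a filtration then reduces uniform integrability of $(W_n)$ under $\P$ to $\tilde{\P}$-almost-sure boundedness of $\log W_n$ along the spine, which is an i.i.d.\ sum whose convergence is equivalent, via Borel--Cantelli, to exactly the condition
\[
\sum_{\eta} \eta(b)\log\eta(b)\,\p(a,\eta) < \infty \qquad \text{for all } a,b \in \mathscr{T}.
\]
As an alternative, one can use Kesten--Stigum's original truncation argument, controlling second moments of truncated offspring numbers to obtain the same conclusion.

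To upgrade the scalar limit $W_n\to W$ to the coordinatewise convergence $\rho^{-n}\mathbf{Z}_n(b)\to W\,\nu(b)$ for every $b\in\mathscr{T}$, I would invoke the spectral decomposition of $M$ guaranteed by \eqref{eq:pos}: the Perron--Frobenius theorem yields $M = \rho\,\nu\mu^\top + M_*$ with $M_*$ of spectral radius strictly less than $\rho$, so $\rho^{-n}M^n \to \nu\mu^\top$ at a geometric rate. For fixed $m$ and $b$,
\[
\E\!\left[\rho^{-n}\mathbf{Z}_n(b)\mid\mathcal{F}_m\right] = \rho^{-n}\sum_a \mathbf{Z}_m(a)\,M^{n-m}(a,b) \;\longrightarrow\; W_m\,\nu(b) \quad (n\to\infty).
\]
Since $\mu(a)>0$ for all $a$, the family $\{\rho^{-n}\mathbf{Z}_n(b)\}_n$ is $L^1$-bounded (dominated by $W_n/\mu(b)$), and a standard Borel--Cantelli argument exploiting the geometric decay of $M_*^n$ together with the uniform integrability established above delivers $\P$-a.s.\ convergence to the limit identified by the conditional expectation; letting $m\to\infty$ pins it down as $W\,\nu(b)$. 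Setting $Z:=W$ completes the argument.

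The main obstacle is unquestionably the $X\log X$ step: establishing nondegeneracy of $W$ via either the size-biased spine decomposition or the classical truncation-plus-second-moment route is the genuine analytical content of the theorem. The martingale construction and the Perron--Frobenius propagation of the limit from $W_n$ to the full vector $\rho^{-n}\mathbf{Z}_n$ are essentially bookkeeping in comparison.
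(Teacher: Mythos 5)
The paper does not prove this theorem: it is stated as a direct citation of Theorem~1 in Chapter~V, Section~6 of Athreya and Ney~\cite{AN72}, and is used as a black box in Section~4.2. So there is no proof in the paper to compare yours against; what you have written is a reconstruction of a classical result.

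That said, your outline is a legitimate (and modern) route. The object $W_n=\rho^{-n}\sum_a\mu(a)\mathbf{Z}_n(a)$ is the right martingale, the left-eigenvector computation is correct, and reducing nondegeneracy of the limit to uniform integrability and then to the $X\log X$ condition via a size-biased spine is the Kurtz--Lyons--Pemantle--Peres argument, which is genuinely different from Athreya--Ney's original truncation-and-second-moment proof. The one place your sketch is materially thin is the upgrade from the scalar limit $W_n\to W$ to the coordinatewise statement $\rho^{-n}\mathbf{Z}_n(b)\to W\nu(b)$ almost surely. The quantity $\rho^{-n}\mathbf{Z}_n(b)$ is not itself a martingale, and the conditional expectation identity you write only pins down the limit in mean; passing to almost-sure convergence requires a genuine estimate on the fluctuations of $\mathbf{Z}_n$ about its conditional mean (Athreya--Ney do this by first proving that the \emph{direction} $\mathbf{Z}_n/\bigl(\sum_a\mathbf{Z}_n(a)\bigr)$ converges to the normalized $\nu$ almost surely on the survival event, using positive regularity~\eqref{eq:pos} quantitatively). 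Calling that step ``essentially bookkeeping'' undersells it: without an additional moment hypothesis it is a real piece of work, and it is where the spectral-gap estimate $\rho^{-n}M^n\to\nu\mu$ has to be combined with a Borel--Cantelli bound on the conditional variances, not merely invoked. None of this affects the paper, which relies on~\cite{AN72} for exactly this content.
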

For the subcritical regime, the following statement is contained in Theorem~2 in Chapter~V, Section 4 of~\cite{AN72}:
\begin{theorem}\label{thm:subcritAN}
	If~$\rho < 1$, then for each~$a \in \mathscr{T}$ and~$\eta \in (\N_0)^\mathscr{T}$, the limit
	\[
Q(\eta) :=	\lim_{n\rightarrow\infty}\P(\mathbf{Z}_n=\eta\mid\mathbf{Z}_0=\mathbf{e}_a,\; \mathbf{Z}_n\neq\bold{0})
	\]
	exists and is independent of~$a$. Moreover,~$\sum_\eta Q(\eta) = 1$.
\end{theorem}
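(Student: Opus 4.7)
The plan is to follow the classical multi-type generating-function approach. For $\mathbf{s} = (s_b)_{b \in \mathscr{T}} \in [0,1]^\mathscr{T}$ and $a \in \mathscr{T}$, define $F_n^{(a)}(\mathbf{s}) := \mathbb{E}[\mathbf{s}^{\mathbf{Z}_n}\mid \mathbf{Z}_0 = \mathbf{e}_a]$, where $\mathbf{s}^\eta := \prod_b s_b^{\eta(b)}$; these satisfy $F_{n+1}^{(a)}(\mathbf{s}) = F_1^{(a)}(F_n^{(\cdot)}(\mathbf{s}))$. Subcriticality ($\rho < 1$) forces $\zeta(a) = 0$, hence $F_n^{(a)}(\mathbf{s}) \to 1$ pointwise on $[0,1]^\mathscr{T}$. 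Writing $u_n^{(a)}(\mathbf{s}) := 1 - F_n^{(a)}(\mathbf{s})$ and observing
\[
\mathbb{E}[\mathbf{s}^{\mathbf{Z}_n} \mid \mathbf{Z}_0 = \mathbf{e}_a,\ \mathbf{Z}_n \neq \mathbf{0}] = 1 - \frac{u_n^{(a)}(\mathbf{s})}{u_n^{(a)}(\mathbf{0})},
\]
the whole task reduces to identifying the leading-order behavior of $u_n^{(a)}(\mathbf{s})$ as $n \to \infty$.

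Taylor-expanding $F_1^{(a)}$ about $\mathbf{1}$ gives the recursion
\[
u_{n+1}^{(a)}(\mathbf{s}) = \sum_{b \in \mathscr{T}} M(a,b)\, u_n^{(b)}(\mathbf{s}) + R_n^{(a)}(\mathbf{s}), \qquad R_n^{(a)}(\mathbf{s}) = O\bigl(\|u_n(\mathbf{s})\|^2\bigr).
\]
Assumption~\eqref{eq:pos} together with the Perron-Frobenius theorem provides a spectral gap, so $M^n(a,b) \sim \rho^n \nu(a) \mu(b)$ as $n \to \infty$, with $\mu,\nu$ as in~\eqref{eq:evs}-\eqref{eq:normalization}. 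The main technical step, and the place I expect the real work to be, is to show that the nonlinear remainders do not destroy the linear scaling: one expects
\[
u_n^{(a)}(\mathbf{s}) \sim \nu(a)\, K(\mathbf{s})\, \rho^n \qquad (n \to \infty),
\]
with $K(\mathbf{s}) \ge 0$ continuous in $\mathbf{s}$ and $K(\mathbf{0}) > 0$. A clean route is to pair the iteration with $\mu$ from the left: setting $a_n(\mathbf{s}) := \sum_a \mu(a) u_n^{(a)}(\mathbf{s})$ one obtains $a_{n+1}(\mathbf{s}) = \rho\, a_n(\mathbf{s}) + O(a_n(\mathbf{s})^2)$, which can be iterated (since $a_n \to 0$) to give $\rho^{-n} a_n(\mathbf{s}) \to K(\mathbf{s})$; the scalar asymptotic is then lifted to each coordinate by exploiting the fact, enforced by the spectral gap, that $u_n/\|u_n\|$ converges to the Perron-Frobenius direction $\nu$.

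Once this sharp asymptotic is in hand, the ratio $u_n^{(a)}(\mathbf{s})/u_n^{(a)}(\mathbf{0})$ converges to $K(\mathbf{s})/K(\mathbf{0})$, and the prefactor $\nu(a)$ cancels from numerator and denominator, giving a limit that is independent of $a$. By the standard continuity theorem for generating functions this yields existence of $Q(\eta) := \lim_n \mathbb{P}(\mathbf{Z}_n = \eta \mid \mathbf{Z}_0 = \mathbf{e}_a,\ \mathbf{Z}_n \neq \mathbf{0})$ for each $\eta \in (\N_0)^\mathscr{T}$, uniformly in $a$. To conclude that $\sum_\eta Q(\eta) = 1$, it is enough to check that the limiting generating function $H(\mathbf{s}) := 1 - K(\mathbf{s})/K(\mathbf{0})$ satisfies $H(\mathbf{s}) \to 1$ as $\mathbf{s} \nearrow \mathbf{1}$, i.e. that $K$ is continuous at $\mathbf{1}$ with $K(\mathbf{1}) = 0$. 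Both are built into the construction, since $u_n^{(a)}(\mathbf{1}) = 0$ for every $n$, and they translate into tightness of the conditional laws, finishing the proof.
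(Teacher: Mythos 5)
The paper itself does not prove this statement; it imports it verbatim from Athreya and Ney (Theorem~2 in Chapter~V, Section~4 of~\cite{AN72}), so there is no in-paper proof to compare against and you are supplying an argument the authors chose to cite. Your plan is the standard generating-function route, but it has a genuine gap at the central step: you announce $u_n^{(a)}(\mathbf{s}) \sim \nu(a)\,K(\mathbf{s})\,\rho^n$ with $K(\mathbf{0}) > 0$ and then divide to get an $a$-free limit. The assertion $K(\mathbf{0})>0$ says precisely that $\rho^{-n}\,\P(\mathbf{Z}_n \neq \mathbf{0}\mid \mathbf{Z}_0=\mathbf{e}_a)$ converges to a strictly positive limit, and this is \emph{not} a consequence of the standing assumptions~\eqref{eq:not_rw} and~\eqref{eq:pos}: already in the single-type case it is equivalent to the $x\log x$ condition $\E[Z_1\log^{+}Z_1]<\infty$ (Athreya--Ney, Ch.~I, \S 8). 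The gap is already present in your Taylor step, since the bound $R_n^{(a)}(\mathbf{s}) = O(\|u_n(\mathbf{s})\|^2)$ requires finite second moments of the offspring laws $\p(a,\cdot)$, a hypothesis the paper never makes; without that uniform quadratic control, iterating $a_{n+1} = \rho\,a_n + O(a_n^2)$ only gives $\rho^{-n}a_n \downarrow K(\mathbf{s}) \ge 0$, and when $K(\mathbf{0}) = 0$ your final ratio $K(\mathbf{s})/K(\mathbf{0})$ is meaningless.

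The classical proof avoids ever pinning down a rate for $u_n^{(a)}(\mathbf{0})$: using convexity and the iteration structure of the generating functions one shows directly that $n \mapsto u_n^{(a)}(\mathbf{s})/u_n^{(a)}(\mathbf{0})$ is monotone in $n$, hence converges, and the limit is identified via the functional equation; properness follows from the behavior near $\mathbf{1}$ (your observation that the limit is dominated by something vanishing as $\mathbf{s}\to\mathbf{1}$ is the right instinct and survives this reformulation). Alternatively, if you want to keep the sharp-asymptotic route, you must state the extra moment hypothesis explicitly. That would be harmless for the paper's \emph{application}, since the process $\mathbf{W}$ of Section~4.2 has offspring with bounded support, but it does not prove the theorem at the level of generality at which it is stated.
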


For the result we will reproduce concerning the critical case~$\rho =1$, we first introduce some terminology, starting with convergence in distribution (in the sense of Benjamini and Schramm~\cite{BS01}) for sequences of random rooted trees. A rooted tree~$\dot{\tau} = (\tau,\mathcal{O})$ is a tree~$\tau$ with a distinguished vertex~$\mathcal{O}$. For each~$m \in \N_0$, let us denote by~$B_m(\dot{\tau})$ the rooted tree obtained as the subgraph of~$\tau$ induced by all vertices at graph distance at most~$m$ from~$\mathcal{O}$, rooted again at~$\mathcal{O}$. We say that a sequence of random  rooted trees~$\dot{\tau}_n$ converges in distribution to a random rooted tree~$\dot{\tau}$ if, for any~$m \in \N_0$ and any deterministic finite rooted tree~$(\bar{t},\bar{o})$, we have
\[\P(B_m(\dot{\tau}_n) \cong (\bar{t},\bar{o})) \xrightarrow{n \to \infty} \P(B_m(\dot{\tau}) \cong (\bar{t},\bar{o})),\]
where~`$\cong$' denotes graph isomorphism between rooted trees.

We now recall some facts about convergence in distribution of family trees of critical branching processes. In~\cite{K86}, Kesten introduced a random tree with an infinite spine that is the local limit in distribution of the family tree of a critical (single-type) branching process conditioned on reaching generation~$n$, as~$n\rightarrow\infty$.
In \cite{AD14} the authors gave a necessary and sufficient condition for the convergence in distribution of a family tree to the corresponding Kesten's tree.
Later this result was generalized for multi-type branching processes in~\cite{ADG18} and~\cite{S18}. The following follows from Theorem~3.1 in~\cite{S18}:
\begin{theorem}\label{thm:critical_mtbp}
Assume that~$\rho  = 1$ and that there exists~$\theta > 0$ such that
\[\sum_{\eta \in (\N_0)^\mathscr{T}} \p(a,\eta)\cdot \exp\left\{\theta \cdot \sum_{b \in \mathscr{T}} \eta(b)\right\} < \infty \quad \text{for all } a \in \mathscr{T}.\]
Then, the family tree of~$(\mathbf{Z}_n)_{n \ge 0}$ conditioned on having more than~$n$ vertices converges in distribution, as~$n \to \infty$, to a random rooted tree.
\end{theorem}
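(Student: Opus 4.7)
The plan is to identify the limit as a multi-type analogue of Kesten's infinite critical tree and then check Benjamini--Schramm convergence by a direct local computation. Let $a_0 \in \mathscr{T}$ denote the type of the root of $\mathbf{Z}_0$. The candidate limit $\hat{\tau}$ has a distinguished infinite spine emanating from the root, whose sequence of types $(A_i)_{i \ge 0}$ is a Markov chain starting at $A_0=a_0$ with transitions
\[\hat{K}(a,b) = \frac{M(a,b)\,\nu(b)}{\nu(a)},\]
where $\nu$ is the right Perron--Frobenius eigenvector from~\eqref{eq:evs}--\eqref{eq:normalization} and we have used $\rho = 1$. At each spine vertex of type $a$, the offspring population is drawn from the $\nu$-size-biased version of $\p(a,\cdot)$ (weight proportional to $\sum_b \nu(b)\eta(b)/\nu(a)$), the next spine vertex is chosen uniformly among offspring of the appropriate type, and all remaining offspring start independent unconditioned copies of the family tree governed by $\p$.

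To verify convergence, fix a finite rooted marked tree $(\bar{t},\bar{o})$ of height $m$ whose root carries type $a_0$. Writing $L_m(\bar{t})$ for its depth-$m$ vertices and $\tau^{(v)}$ for the independent subtree hanging from $v$, which has root type $\dt(v)$, one decomposes
\[\P(B_m(\tau) \cong (\bar{t},\bar{o}),\;|\tau|>n) = \P(B_m(\tau)\cong(\bar{t},\bar{o}))\cdot \P\Bigl(\,\bigcup_{v \in L_m(\bar{t})}\{|\tau^{(v)}|>n-|\bar{t}|\}\Bigr) + R_n,\]
where $R_n$ collects the contribution of configurations in which at least two distinct level-$m$ subtrees are simultaneously large. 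The first factor on the right is independent of $n$, so the task reduces to a ratio-limit theorem for $\P_a(|\tau|>n)$ as $a$ varies in $\mathscr{T}$, together with a bound showing $R_n$ is lower order.

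The heart of the matter, and the main technical obstacle, is the Kolmogorov-type tail estimate
\[\P_a(|\tau|>n) \sim c_a\cdot n^{-1/2}\quad \text{as }n\to\infty,\]
together with the matching uniform upper bound $\P_a(|\tau|>n)=O(n^{-1/2})$. This is precisely where the exponential moment hypothesis enters: it permits a spectral analysis of the vector-valued offspring generating function near its fixed point $\mathbf{1}$, yielding a matrix-valued quadratic expansion whose leading behaviour is governed by the simple Perron eigenvalue $\rho=1$. Such an analysis, carried out in Theorem~3.1 of~\cite{S18}, both produces the asymptotics above and identifies the constants as $c_a$ proportional to $\nu(a)$. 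Granting it, the uniform bound forces $R_n = O(n^{-1})$ by inclusion-exclusion on the independent subtrees, while dividing the main term by $\P_{a_0}(|\tau|>n)\sim c_{a_0}n^{-1/2}$ yields
\[\P\bigl(B_m(\tau)\cong(\bar{t},\bar{o})\bigm|\,|\tau|>n\bigr) \xrightarrow{n\to\infty} \P(B_m(\tau)\cong(\bar{t},\bar{o}))\cdot \sum_{v\in L_m(\bar{t})} \frac{\nu(\dt(v))}{\nu(a_0)}.\]
A direct unwinding of the construction of $\hat{\tau}$ shows that the right-hand side equals $\P(B_m(\hat{\tau})\cong(\bar{t},\bar{o}))$, since the size-biasing by $\nu$ along the spine is precisely what produces the extra $\nu$-weighted sum over the possible spine locations in $L_m(\bar{t})$. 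This establishes Benjamini--Schramm convergence, and the genuinely hard analytic work is already packaged in the cited tail estimate.
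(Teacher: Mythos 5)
The paper offers no internal proof of this statement to compare against: it is quoted as a direct consequence of Theorem~3.1 of Stephenson~\cite{S18}, full stop. What you have written is a reconstruction of the standard proof of that external result, and its skeleton is sound: identify the limit as the multi-type Kesten tree, factor $\{B_m(\tau)\cong(\bar t,\bar o)\}\cap\{|\tau|>n\}$ via the branching property at depth~$m$, control the error term (correctly: if the sum of the level-$m$ subtree sizes exceeds $n-c$ but no single one does, at least two must exceed $(n-c)/|L_m(\bar t)|$, giving $R_n=O(n^{-1})$ by independence), and reduce everything to the tail asymptotic $\P_a(|\tau|>n)\sim c_a n^{-1/2}$ with $c_a\propto\nu(a)$; the resulting limit is consistent, since $\sum_{\bar t}\P(B_m(\tau)\cong\bar t)\sum_{v\in L_m(\bar t)}\nu(\dt(v))/\nu(a_0)=\rho^m\nu(a_0)/\nu(a_0)=1$. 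Two caveats. First, that tail estimate is the entire analytic content of the theorem, and you assert it rather than prove it; attributing it to ``Theorem~3.1 of~\cite{S18}'' is also slightly circular, since that theorem \emph{is} the local convergence statement being proved --- the $n^{-1/2}$ asymptotics with $\nu$-proportional constants are intermediate results there and in the classical literature on total progeny of critical multi-type processes (where the exponential moment hypothesis is indeed far more than enough). So your argument is a valid reduction, not a self-contained proof --- though, to be fair, the paper outsources even the reduction. Second, your description of the spine is internally inconsistent as stated: if the next spine type $b$ is drawn first from $\hat K(a,b)=M(a,b)\nu(b)/\nu(a)$, then the surrounding offspring must be sampled from $\p(a,\eta)\eta(b)/M(a,b)$ (size-biased by the count of type $b$), not from the $\sum_{b'}\nu(b')\eta(b')$-biased law with an independent uniform choice; the two constructions agree only after summing over the spine position, which is all you actually use, so this is a presentational rather than a substantive flaw.
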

In fact, in~\cite{S18} a description of the limiting tree is provided: it is again a Galton-Watson tree with an augmented space of types and an offspring distribution that is obtained from~$\p$ and the eigenvalues associated to~$\rho = 1$. We refrain from recalling the details here, since we will not use them.

\subsection{Second branching process representation of percolation cluster}
We again go back to our percolation model on~$\mathbb{T}$ with parameters~$p,q \in [0,1]^2$. Recall that~$V^o_\Gamma = \{u \in V:\; 0 \le h(u)< k\}$.
\begin{definition}\label{def:s_and_w} Define the random sets
$$S(v) :=  \{u \in V^o_\Gamma:\; v\cdot u \in \mathscr{C}\},\qquad v \in V.$$
Let~$\mathscr{B}$ denote the collection of all non-empty subsets of~$V^o_\Gamma$ and define the process~$\mathbf{W} = (\mathbf{W}_n)_{n \ge 0}$ on~$(\N_0)^\mathscr{B}$ by setting
\begin{equation*}
\mathbf{W}_n(A) := |\{v:\;h(v) = n,\; S(v) = A\}|,\qquad A \in \mathscr{B},\; n \in \N_0.
\end{equation*}
\end{definition}
By considering the exploration process~$(\mathscr{C} \cap \{v:h(v)\le k-1+n\})_{n \ge 0}$, it is readily seen that~$\mathbf{W}$ is a multi-type branching process with set of types~$\mathscr{B}$. The initial population is given by a single individual of a random type, with distribution equal to that of~$\mathscr{C} \cap V^o_\Gamma$. The mean offspring matrix of~$\mathbf{W}$ will (again) be denoted~$M$. We will write~$M_{p,q}$ when we want to  make the parameters explicit. It is not difficult to check that~$M$ satisfies the conditions~\eqref{eq:not_rw} and~\eqref{eq:pos} given in Section~\ref{sect:branchingproc}; we leave this to the reader. We again let~$\rho=\rho_{p,q}$ denote the Perron-Frobenius eigenvalue of~$M_{p,q}$.
\begin{lemma}\label{lem:corresp}
We have
\[\rho_{p,q} \begin{cases}
>1 &\text{if } q > q_c(p);\\
=1 &\text{if } q = q_c(p);\\
<1 &\text{if } q < q_c(p).
\end{cases}\]
\end{lemma}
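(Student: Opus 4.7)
The plan is to tie the Perron--Frobenius dichotomy for $\mathbf{W}$ to the existence of an infinite cluster, and then split the non-percolative regime by exploiting the analytic dependence of $\rho_{p,q}$ on~$q$.

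\emph{Step 1 (identification of survival with percolation).} I would first check $\{\mathbf{W}\text{ survives}\}=\{|\mathscr{C}|=\infty\}$ up to null sets. If $|\mathscr{C}|<\infty$ then for $n$ larger than $\max_{v\in\mathscr{C}}h(v)$, every vertex $v$ at height $n$ satisfies $v\cdot u\notin\mathscr{C}$ for all $u\in V_\Gamma^o$, hence $S(v)=\emptyset$ and $\mathbf{W}_n=\mathbf{0}$. Conversely, if $|\mathscr{C}|=\infty$ then $X_n:=|\{v\in\mathscr{C}:h(v)=n\}|\ge 1$ for infinitely many $n$ (each level being finite), and for any such $v$ we have $o\in S(v)$; so $|\mathbf{W}_n|\ge 1$ infinitely often, which forces $\mathbf{W}_n\ne\mathbf{0}$ for every $n$ since $\mathbf{0}$ is absorbing. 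Since $M_{p,q}$ satisfies~\eqref{eq:not_rw} and~\eqref{eq:pos}, the Perron--Frobenius dichotomy from Section~\ref{sect:branchingproc} then yields $\rho_{p,q}>1$ iff $\P(|\mathscr{C}|=\infty)>0$; combining with the definition of $q_c(p)$, the monotonicity of percolation in $q$, and the no-percolation-at-criticality result of~\cite{LRV17}, this in turn is equivalent to $q>q_c(p)$. This settles the supercritical case.

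\emph{Step 2 (continuity and monotonicity of $\rho_{p,q}$ in $q$).} To separate $\rho=1$ from $\rho<1$ on $q\le q_c(p)$, I would establish two facts. First, $q\mapsto\rho_{p,q}$ is real-analytic on $(0,1)$: each entry of $M_{p,q}$ is a polynomial in~$q$, and by~\eqref{eq:pos} the Perron--Frobenius eigenvalue is simple throughout, so standard perturbation theory for simple eigenvalues of analytic matrix families applies. Second, $q\mapsto\rho_{p,q}$ is weakly non-decreasing: coupling the percolation processes at two values of $q$ via a common family of i.i.d.\ uniform $[0,1]$ edge labels makes $\mathscr{C}_{p,q}$ almost-surely monotone in $q$, so $\E|\mathbf{W}_n|$ is non-decreasing in $q$ for every $n$; taking $n$-th roots and $n\to\infty$ yields $\rho_{p,q_1}\le\rho_{p,q_2}$ for $q_1<q_2$.

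\emph{Step 3 (conclusion).} Continuity together with Step~1 pins down $\rho_{p,q_c(p)}=1$. If $\rho_{p,q_0}=1$ held for some $q_0<q_c(p)$, then weak monotonicity and the bound $\rho\le 1$ on $[0,q_c(p)]$ would force $\rho\equiv 1$ on $[q_0,q_c(p)]$, and the identity theorem for real-analytic functions would extend this to $\rho\equiv 1$ on all of $(0,1)$, contradicting Step~1. Hence $\rho_{p,q}<1$ for every $q<q_c(p)$, completing the trichotomy. The main obstacle I foresee is precisely this last step---ruling out a plateau of $\rho$ at the value $1$ over an interval of subcritical $q$---which is where analyticity (rather than mere continuity) of the simple Perron--Frobenius eigenvalue in $q$ is essential.
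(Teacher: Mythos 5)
Your proposal is correct, and Steps 1 and the continuity argument (pinning $\rho_{p,q_c(p)}=1$) coincide with the paper's. Where you diverge is the subcritical regime $q<q_c(p)$, which is indeed the delicate part. The paper handles it with a hands-on coupling: it builds an intermediate cluster $\mathscr{C}^*$ with $\mathscr{C}_{p,q}\subset\mathscr{C}^*\subset\mathscr{C}_{p,q'}$ for some $q'\in(q,q_c(p))$, designed so that the associated mean matrix $M^*$ agrees with $M_{p,q}$ except in the single entry $(\{o\},\{o\})$, which is strictly larger; since $\mathscr{C}^*$ is a.s.\ finite one gets $\rho^*\le 1$, and first-order Perron--Frobenius perturbation then gives $\rho_{p,q}<\rho^*\le 1$. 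You instead establish that $q\mapsto\rho_{p,q}$ is real-analytic (polynomial entries plus simplicity of the PF eigenvalue under~\eqref{eq:pos}) and weakly non-decreasing (via the standard monotone coupling and $\E|\mathbf{W}_n|^{1/n}\to\rho$), and then invoke the identity theorem to exclude a plateau of $\rho$ at $1$ on any subinterval of $(0,q_c(p))$. Both are sound. The paper's argument is local and constructive, and as a byproduct exhibits an explicit non-negative rank-one-type perturbation direction in which $\rho$ strictly increases; yours is softer and more global, trading the bespoke coupling for the general machinery of analytic perturbation of a simple eigenvalue together with monotonicity. One small point worth making explicit in your write-up: the identity-theorem step needs the primitivity condition~\eqref{eq:pos} to hold uniformly on an open interval of $q$ so that the analytic branch you track is the PF eigenvalue throughout; you assert this, and it does hold for $q\in(0,1)$, but it deserves a sentence since at $q=0$ primitivity can fail.
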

\begin{proof}
Note that~$\mathbf{W}$ survives  if and only if~$\mathscr{C}$ is infinite. Moreover, as mentioned earlier, it was proved in~\cite{LRV17} that~$\mathscr{C}$ is almost surely finite when~$q = q_c(p)$. This implies that~$\rho_{p,q} > 1$ if and only if~$q > q_c(p)$. Also note that the entries of~$M_{p,q}$ depend continuously on~$(p,q)$. Since~$\rho_{p,q}$ is a simple root of the characteristic equation of this matrix, it depends continuously (in fact smoothly) on the entries of the matrix, hence~$\rho_{p,q}$ also depends continuously on~$(p,q)$. This implies that~$\rho_{p,q_c(p)} = 1$.

It remains to prove that~$\rho_{p,q} < 1$ when~$q < q_c(p)$. To this end, fix~$(p,q)$ with~$q < q_c(p)$. Also fix~$q' \in (q,q_c(p))$. We construct a standard coupling of the clusters~$\mathscr{C}_{p,q}$ and~$\mathscr{C}_{p,q'}$ using independent Uniform$([0,1])$ random variables indexed by the edges of~$\mathbb{T}$,~$\{U_e: e \in E \}$. For both~$\mathscr{C}_{p,q}$ and~$\mathscr{C}_{p,q'}$, we declare a short edge~$e$ to be open if~$U_e \le p$. We declare a long edge~$e$ to be open for~$\mathscr{C}_{p,q}$ if~$U_e \le q$, and we declare it to be open for~$\mathscr{C}_{p,q'}$ if~$U_e \le q'$. Now, in this same probability space, we define an additional cluster~${ \mathscr{C}}^*$, which will satisfy~$\mathscr{C}_{p,q}\subset \mathscr{C}^* \subset \mathscr{C}_{p,q'}$, and will give rise to a multi-type branching process~$\mathbf{W}^*$ with Perron-Frobenius eigenvalue~$\rho^*$. We will then show that~$\rho_{p,q} < \rho^* \le 1$, completing the proof.

We will define~$\mathscr{C}^*$ by recursively defining~$\mathscr{C}^* \cap \{v:h(v) \le n\}$ for each~$n \ge k-1$. Start the recursion by setting
$$\mathscr{C}^* \cap \{v:h(v)\le k-1\} = \mathscr{C}_{p,q} \cap \{v:h(v)\le k-1\}.$$ Now assume that~$\mathscr{C}^* \cap \{v:h(v) \le n\}$ has been defined. Fix~$v$ with~$h(v) = n+1$; let~$v'$ be the parent of~$v$ (so that~$\langle v',v\rangle$ is a short edge of~$\mathbb{T}$) and let~$v''$ be the ancestor of~$v$ at distance~$k$ from~$v$ (so that~$\langle v'',v\rangle$ is a long edge of~$\mathbb{T}$).
 We include~$v$ in~$\mathscr{C}^*$ in any of the following three situations:
\begin{itemize}
\item[(a)] $v' \in \mathscr{C}^*$, and~$U_{\langle v', v \rangle} \le p$;
\item[(b)] $v'' \in \mathscr{C}^*$, and~$U_{\langle v'',v\rangle} \le q$;
\item[(c)] $v'' \in \mathscr{C}^*$,~$U_{\langle v'', v \rangle} \in (q,q']$, and~$V^{v''}_\Gamma \cap \mathscr{C}^* = \{v''\}$ (that is, no descendant of~$v''$ with height between~$h(v'')+1$ and~$h(v)-1$ has been included in~$\mathscr{C}^*$).
\end{itemize}
Put simply, in an exploration that adds one height unit at a time, the cluster~$\mathscr{C}^*$ grows in the same way as~$\mathscr{C}_{p,q}$, with the exception of rule~(c) above, which prescribes that vertices that would otherwise become leaves are given extra chances (with probability~$q'-q$ each) of having (long-distance) neighbors.

We now replicate Definition~\ref{def:s_and_w} by letting
$$S^*(v):= \{u \in V^o_\Gamma: \; v\cdot u \in \mathscr{C}^*\},\qquad v \in V, $$
and defining~$\mathbf{W}^* = (\mathbf{W}_n^*)_{n \ge 0}$ by 
$$\mathbf{W}_n^*(A):= |\{v:\;h(v) = n,\; S^*(v) = A\}|,\qquad A \in \mathscr{B},\; n \in \mathbb{N}_0.$$
Then,~$\mathbf{W}^*$ is also a multi-type branching process. Its mean offspring matrix, denoted~$M^*$, satisfies:
\begin{equation}\label{eq:pf_n*} \begin{split}
&M^*(A,B) = M_{p,q}(A,B) \text{ if }(A,B) \neq (\{o\},\{o\}),\\ & M^*(\{o\},\{o\}) > M_{p,q}(\{o\},\{o\}).\end{split}
\end{equation}
We have that
$$\mathbb{P}(\mathbf{W}^* \text{ survives}) =\mathbb{P}(|\mathscr{C}^*| = \infty) \le \mathbb{P}(|\mathscr{C}_{p,q'}| = \infty) = 0,$$
so the Perron-Frobenius eigenvalue~$\rho^*$ of~$M^*$ is at most~$1$. We will now show that~\eqref{eq:pf_n*} implies  that~$\rho^* > \rho_{p,q}$. Let~$\mu,\nu: \mathscr{B} \to \mathbb{R}$  be left and right eigenvectors of~$M_{p,q}$ associated to the eigenvalue~$\rho_{p,q}$, as in~\eqref{eq:evs} and~\eqref{eq:normalization}.
Recall that all entries of~$\mu$ and~$\nu$ are strictly positive. Fix any norm~$\|\cdot \|$ on the space of matrices. The perturbation theory of Perron-Frobenius eigenvalues (see Theorem~8 in Chapter~8 of~\cite{MN19}) gives that the Perron-Frobenius eigenvalue of~$M_{p,q} +E$, where~$E$ is a matrix of sufficiently small norm, is given by
$$\rho(M_{p,q}+ E) = \rho_{p,q} + \frac{\sum_{A,B}\mu(A)\cdot E(A,B)\cdot \nu(B)}{\sum_A \mu(A) \cdot \nu(A)} + O(\|E\|^2).$$
In particular, if~$E$ is a non-negative matrix with at least one strictly positive entry, then~$\rho(M_{p,q} + E) > \rho_{p,q}$. The result is thus obtained  by taking~$E = M^*- M_{p,q}$.
\end{proof}

\begin{proof}[Proof of {{Theorem~\ref{thm:limits}}}]
Recall that~$X_n$ denotes the number of vertices~$u \in \mathscr{C}_{p,q}$ with~$h(u) = n$. We extract~$X_n$ from~$\mathbf{W}_n$ by writing
$$X_n = \sum_{B \in \mathscr{B}}\mathbf{W}_n(B) \cdot \mathds{1}\{o \in B\}.$$

Assume~$q > q_c(p)$. Then, by Lemma~\ref{lem:corresp} we have~$\rho > 1$, so Theorem~\ref{thm:supercritAN} gives~$\rho^{-n}\cdot \mathbf{W}_n \xrightarrow{n \to \infty} Z \cdot \nu$ for some random variable~$Z$ with~$\P(Z > 0) > 0$. We then obtain
$$\frac{X_n}{\rho^n} \xrightarrow{n \to \infty} Z\cdot \sum_{B \in \mathscr{B}} \nu(B)\cdot \mathds{1}\{o \in B\},$$
proving the first statement of the theorem. The statement for~$q < q_c(p)$ is proved similarly.

We would like to obtain the final statement, concerning the critical case, by applying Theorem~\ref{thm:critical_mtbp} to the family tree of~$\mathbf{W}$. However, there is a problem: the cluster~$\mathscr{C}$ cannot be recovered from the family tree of~$\mathbf{W}$. For instance, assume that~$\mathbf{W}_0$ is the population consisting of a single individual of type~$\{o\}$, and~$\mathbf{W}_1$ is the population consisting of a single individual of type~$\{v\}$, where~$v$ is a vertex with~$h(v) = k-1$ (so that~$v = (v_1,\ldots, v_{k-1})$, with~$v_1,\ldots,v_{k-1} \in [d]$). Then,~$\mathscr{C} \cap \{u:h(u) \le k\}$ could be any of the sets
\[\{o,(1,v_1,\ldots,v_{k-1})\},\;\{o,(2,v_1,\ldots,v_{k-1})\},\ldots, \{o,(d,v_1,\ldots, v_{k-1})\}. \]
To solve this issue, we modify the process~$\mathbf{W}$ slightly, by augmenting its set of types in order to encode the information that is missing, so that the correspondence between realizations of its family tree and realizations of~$\mathscr{C}$ becomes one-to-one.

To this end, first define the function~$f: V \to [d]$ by defining~$f(o)$ arbitrarily (say,~$f(o) = 1$), and letting
$$f(v_1,\ldots, v_n) = v_n \quad \text{for any } v = (v_1,\ldots, v_n) \in V \backslash \{o\}.$$ 
We then let~$\hat{\mathscr{B}} := \mathscr{B} \times [d]$ and define the process~$\hat{\mathbf{W}} = (\hat{\mathbf{W}}_n)_{n \ge 0}$ with space of types~$\hat{\mathscr{B}}$ by letting
$$\hat{\mathbf{W}}_n((A,i)) := |\{v:\;h(v) =n,\; S(v)=A,\; f(v) =i\}|,\; A \in \mathscr{B},\;i \in [d],\; n \in \N_0.$$
It should now be clear that~$\hat{\mathbf{W}}$ is a multi-type branching process, and moreover there is a natural bijection between~$\mathscr{C}$ and the trajectory~$(\hat{\mathbf{W}}_n)_{n \ge 0}$, so that (say) an individual of type~$(A,i)$ in generation~$n$ of~$\hat{\mathbf{W}}$ can be associated to a unique vertex~$v \in V$ with~$h(v) = n$ and~$S(v) = A$. Moreover, exactly the same proof as that of Lemma~\ref{lem:corresp} shows that the Perron-Frobenius eigenvalue~$\hat{\rho}$ of~$\hat{\mathbf{W}}$ equals~1 when~$q =q_c(p)$. The desired result now follows from applying Theorem~\ref{thm:critical_mtbp} to~$\hat{\mathbf{W}}$.
\end{proof}

\section{The critical curve}\label{sect:critcurve}
We finally prove Theorem~\ref{THM:CRITCURVE} in this section. We start giving a third branching process construction based on an exploration of the percolation cluster. This construction is well-suited to compare percolation on~$\mathbb{T}$ with a branching process whose offspring distribution is a sum of independent binomial random variables. Next, we use a coupling technique (Lemma~\ref{lemma:coupling} below) to argue for stochastic domination, even if the parameter value of the branching process is decreased slightly.
\subsection{Third branching process representation of percolation cluster}
In this section we prove Theorem \ref{THM:CRITCURVE}.
Let~$\widehat{\mathbb{T}}  = \widehat{\mathbb{T}}_{d,k}= (\widehat{V}, \widehat{E})$ be an oriented graph
with vertex and edge set
\begin{align*}
&\widehat{V}=[d+d^k]_*, \qquad \widehat{E}=\{\langle \hat r, \hat r \cdot \hat i \rangle: \hat r\in \hat{V},\; \hat i\in [d+d^k]\}.
\end{align*}
Thus in~$\widehat{\mathbb{T}}$ every vertex has out-degree~$d+d^k$. Denote the root by~$\hat o$.
Fix an arbitrary bijective function~$\varphi: [d+d^k] \to [d]\cup [d]^k$ and partition the edge set into subsets~$\widehat{E}_{\s}$ and~$\widehat{E}_{\ell}$ with
\begin{align*}
\widehat{E}_{\s}&=\{\langle \hat r, \hat r \cdot \hat i \rangle \in \widehat{E}: \varphi(\hat i)\in[d]\},\\
\widehat{E}_{\ell}&=\{\langle \hat r, \hat r \cdot \hat i \rangle \in \widehat{E}: \varphi(\hat i)\in[d]^k\}.
\end{align*}
Consider the following percolation model on~$\widehat{\mathbb{T}}$:
every edge in~$\widehat E_{\s}$ is open with probability~$p$, and every edge in~$\widehat{E}_{\ell}$ is open with probability~$q$.
Denote the cluster of~$\hat o$ by~$\widehat{\mathscr{C}} = \widehat{\mathscr{C}}_{p,q}$.
Note that~$\widehat{\mathscr{C}}$ has the same distribution as the family tree of the branching process with offspring distribution that is the sum of two binomial random variables,~$\text{Bin}(d, p)$ and~$\text{Bin}(d^k, q)$.
Theorem~\ref{THM:CRITCURVE} is a direct consequence of the following proposition.

\begin{proposition}\label{prop:main}
	For every~$\varepsilon >0$ there exists~$\delta > 0$ such that, if~$p,q \in (\varepsilon,1-\varepsilon)$, then
	\begin{equation*}
	\mathbb{P}(|\mathscr{C}_{p,q}| = \infty) \leq \mathbb{P}(|\widehat{\mathscr{C}}_{p,q-\delta}| = \infty). 
	\end{equation*}
\end{proposition}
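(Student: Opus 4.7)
The plan is to construct a coupling between an exploration of the cluster $\mathscr{C}_{p,q}$ on $\mathbb{T}$ and the branching process cluster $\widehat{\mathscr{C}}_{p,q-\delta}$ on $\widehat{\mathbb{T}}$, using Lemma~\ref{lemma:coupling} to argue that survival of the former forces survival of the latter. The guiding intuition is that the branching process $\widehat{\mathscr{C}}_{p,q}$ already dominates $\mathscr{C}_{p,q}$ in an obvious way, since distinct open paths in $\mathbb{T}$ can terminate at the same vertex (while $\widehat{\mathbb{T}}$ is a tree, admitting no such collisions). Proposition~\ref{prop:main} quantifies this: the resulting ``wasted edge probability'' in $\mathscr{C}_{p,q}$ is uniformly strict enough to permit a decrease of $q$ by some $\delta=\delta(\varepsilon)>0$ on the branching process side.

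Concretely, I would explore $\mathscr{C}_{p,q}$ in BFS order, driven by independent uniforms $U_e\in[0,1]$ indexed by edges, with $e\in E_{\s}$ open iff $U_e\leq p$ and $e\in E_{\ell}$ open iff $U_e\leq q$. In parallel, $\widehat{\mathscr{C}}_{p,q-\delta}$ is built from the same randomness, with threshold $q-\delta$ on long edges. For each frontier vertex $v$, I would test first the $d$ short out-edges and then the $d^k$ long out-edges, revealing new cluster members as appropriate. The key step is to show via Lemma~\ref{lemma:coupling} that the conditional distribution of the new-vertex count at each BFS step is stochastically dominated by $\mathrm{Bin}(d,p)+\mathrm{Bin}(d^k,q-\delta)$. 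The waste that makes this possible can be exhibited as follows: for any frontier vertex $v$ with $h(v)\geq k-1$, all $d$ of $v$'s short children share the same long parent, namely the ancestor $(v_1,\ldots,v_{h(v)-k+1})$ of $v$, which has been processed at an earlier BFS step; on the positive-probability event that this ancestor is in $\mathscr{C}$ and the long edge from it to (say) $v\cdot 1$ is open, the short edge $\langle v, v\cdot 1\rangle$ is inessential, and the coupling lemma converts the slack on that $\mathrm{Bernoulli}(p)$ trial into a uniform reduction of $q$ in the branching process by $\delta$.

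The main obstacle is obtaining a uniform positive lower bound on the conditional probability of the waste event, independent of the BFS history. The naive bound, via conditioning on a short path from $o$ to the relevant ancestor being entirely open, produces a factor of $p^{h(v)-k+1}$ that decays with the BFS depth. A more careful argument is needed, either by aggregating waste across several consecutive generations, or by exploiting the assumption $p,q\in(\varepsilon,1-\varepsilon)$ to bound the probability that the ancestor is in $\mathscr{C}$ by a height-independent constant (using for example that the long parent of $v$ has itself been explored and contributes an open long edge with probability at least $\varepsilon$, regardless of past history). Once this uniform waste bound is in hand and Lemma~\ref{lemma:coupling} is applied, Proposition~\ref{prop:main} follows, and Theorem~\ref{THM:CRITCURVE} is an immediate corollary: for $q$ slightly above $q_c(p)$, $\mathbb{P}(|\mathscr{C}_{p,q}|=\infty)>0$ forces $\mathbb{P}(|\widehat{\mathscr{C}}_{p,q-\delta}|=\infty)>0$, which entails $dp+d^k(q-\delta)>1$ and hence $q_c(p)\geq (1-dp)/d^k+\delta>(1-dp)/d^k$.
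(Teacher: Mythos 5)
Your high-level intuition is right — the strictness comes from ``wasted'' long edges that collide with vertices already in the cluster, and Lemma~\ref{lemma:coupling} is the tool to convert this waste into a uniform $\delta>0$ — but your execution has a genuine gap, and you flag roughly where it lies without resolving it. You want a per-vertex BFS coupling, dominating the offspring count at each frontier vertex $v$ by $\mathrm{Bin}(d,p)+\mathrm{Bin}(d^k,q-\delta)$ conditionally on the BFS past. The difficulty is that, conditioned on the BFS history up to $v$, the collision events at $v$ are \emph{deterministic}, not random: if $h(v)\geq k-1$, whether the common long parent $w=(v_1,\ldots,v_{h(v)-k+1})$ of $v$'s short children lies in $\mathscr{C}$, and whether the long edges from $w$ to $\{v\cdot i:i\in[d]\}$ are open, have already been revealed at the step processing $w$. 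So there is no ``positive-probability waste event'' at step $v$ conditionally — either the slack is there or it is not — and when it is not, $\mathrm{Bin}(d,p)+\mathrm{Bin}(d^k,q-\delta)$ fails to dominate. Your proposed fix~(b), that the long parent ``contributes an open long edge with probability at least $\varepsilon$, regardless of past history,'' is wrong for the same reason: that edge has already been exposed and its conditional probability is $0$ or $1$. Fix~(a), aggregating waste over several generations, is the right idea, but you leave it entirely unformalized, and it is precisely the nontrivial content of the proposition.

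The paper realizes exactly this aggregation by replacing the vertex-by-vertex comparison with a \emph{block-by-block} one. It truncates to a finite box $\Lambda$ (vertices of height $<3k$), reduces the cluster restricted to the box to the offspring distribution $Z(\mathcal{C}_{p,q})$ (number of leaves of the box reached) of a one-type branching process, and similarly for $\widehat Z(\widehat{\mathcal{C}}_{p,q})$. It then applies Lemma~\ref{lemma:coupling} \emph{once}, to the whole configuration $\omega\in\{0,1\}^{\widehat E_\Lambda}$ on the finite box, with a hand-picked catastrophic configuration $\bar\omega$ (all short edges up to height $k$ open, all long edges from the root open, so that every long edge from the root collides with the short cluster, giving $Z(\mathcal{C}(\bar\omega))=0$ while $\widehat Z(\widehat{\mathcal{C}}(\bar\omega))$ is maximal). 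Because the configuration space is finite and $\mathbb{P}_{p,q}(\bar\omega)$ is bounded below uniformly for $p,q\in(\varepsilon,1-\varepsilon)$, the coupling lemma yields a uniform $\delta$, and a three-case analysis gives the stochastic domination $Z(\mathcal{C}_{p,q})\preceq\widehat Z(\widehat{\mathcal{C}}_{p,q-\delta})$; the proposition then follows via survival monotonicity of the block branching processes. This finite-box reduction is the missing ingredient in your argument.
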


To allow a comparison between the percolation configurations of~$\mathbb{T}$ and~$\widehat{\mathbb{T}}$
we define the following functions:
\begin{align*}
\hat h: \widehat V\to \mathbb{N} \hspace{20pt}&\hat h(\hat o)=0,\\
&\hat h((\hat v_1,\ldots, \hat v_n))=n,\\
\Phi: \widehat{V} \to V\hspace{20pt} &\Phi((\hat{v}_1, \dots, \hat{v}_n))=(\varphi(\hat{v}_1)\cdot\varphi(\hat{v}_2)\cdot \dots \cdot \varphi(\hat{v}_n)),
\end{align*}
so that~$\hat h$ can be understood as height function on~$\widehat{\mathbb{T}}$ and
$\Phi$ is a surjective map between the vertex sets of the two graphs.
Note that~$\Phi$ maps the endpoints of an edge in~$\widehat{E}_{\s}$ and~$\widehat{E}_{\ell}$ into the endpoints of a short and a long edge in~$\mathbb{T}$ respectively.
Further, for any~$\hat i\in[d+d^k]$
\begin{align}
&h(\Phi(\hat i))= 1 \text{ or } k,\label{eq:heightfunc1}\\
&h(\Phi(\hat v \cdot \hat i))=h(\Phi(\hat v)) + h(\Phi(\hat i)). \label{eq:heightfunc2}
\end{align}
Consequently for~$\hat v= (\hat v_1, \dots, \hat v_n)$,
\begin{equation}\label{eq:phiheight}
h(\Phi(\hat v))=|\{i: \varphi(\hat v_i)\in [d]\}| + k \cdot |\{j: \varphi(\hat v_j)\in [d]^k\}|\geq \hat h(\hat v).
\end{equation}

Let~$\Lambda= (V_\Lambda, E_\Lambda)$ and~${\widehat\Lambda}= (\widehat{V}_\Lambda,\widehat{E}_\Lambda)$ be the subgraphs of~$\mathbb{T}$ and~$\widehat{\mathbb{T}}$ respectively induced by the edge sets
\begin{align*}
&E_\Lambda=\{\langle r, r\cdot i\rangle\in E: h(r)<2k\},\\
&\widehat{E}_\Lambda=\{\langle \hat{r}, \hat{r}\cdot\hat i\rangle\in\widehat{E}: h(\Phi(\hat{r}))<2k\}.
\end{align*}
Observe that~$V_\Lambda=\{v\in V: h(v)<3k\}$, furthermore~\eqref{eq:heightfunc1} and~\eqref{eq:heightfunc2} implies that if~$\hat{r}$ is the endpoint of an edge of~$\widehat{E}_\Lambda$, then~$h(\Phi(\hat{r}))<3k$.
Defining a leaf as a vertex with out-degree zero it is easy to see that
the set of leaves in~$\Lambda$ and~$\widehat\Lambda$ are
\begin{align}
&V_\Lambda^\text{leaf}=\{v \in V_\Lambda: 2k \leq h(v)<3k\},\label{eq:lambdaleaves}\\
&\widehat{V}_\Lambda^\text{leaf}=\{\hat v \in \widehat{V}_\Lambda: 2k \leq h(\Phi(\hat{v}))<3k\}.\label{eq:hatlambdaleaves}
\end{align}
In addition,~\eqref{eq:phiheight} implies that any path in~$\widehat\Lambda$ between the root and a leaf contains at most two edges of~$\widehat{E}_{\ell}$. Observe that
\begin{align}
&\Phi(\widehat{V}_\Lambda \setminus \widehat{V}_\Lambda^\text{leaf})=V_\Lambda\setminus V_\Lambda^\text{leaf},\label{eq:phimapslambda} \\
&\Phi(\widehat{V}_\Lambda^\text{leaf})=V_\Lambda^\text{leaf}.\label{eq:phimapsleaves}
\end{align}

Define~$\mathcal{C}_{p,q}$ and~$\widehat{\mathcal{C}}_{p,q}$ to be the cluster of the root in~$\Lambda$ and~$\widehat\Lambda$ respectively,
and define the random variables~$Z(\mathcal{C}_{p, q}):=|\mathcal{C}_{p,q}\cap V_\Lambda^\text{leaf}|$ and ~$\widehat Z(\widehat{\mathcal{C}}_{p, q}):=|\widehat{\mathcal{C}}_{p,q}\cap\widehat{V}_\Lambda^\text{leaf}|$,
that is the number of vertices in~$V_\Lambda^\text{leaf}$ and~$\widehat{V}_\Lambda^\text{leaf}$ that can be reached by an open path from the root. Clearly
\begin{equation}\label{eq:lambdanoofleaves}
Z(\mathcal{C}_{p, q})\leq |V_\Lambda^\text{leaf}|.
\end{equation}
Let~$\mathbf Z_{p, q}=(\mathbf Z_{n, p, q})_{n\geq 0}$ and~$\widehat{\mathbf Z}_{p, q}=(\widehat{\mathbf Z}_{n, p, q})_{n\geq 0}$ be (one-type) branching processes with offspring distribution~$Z(\mathcal{C}_{p,q})$ and~$\widehat Z(\widehat{\mathcal{C}}_{p,q})$ respectively.

We now state two lemmas that we will need for the proof of Proposition~\ref{prop:main}. The following result is elementary, we omit the proof.

\begin{lemma}\label{lemma:PZcoupling} We have
\begin{align}
 &\mathbb{P}(|\mathscr{C}_{p, q}| = \infty) \leq \mathbb{P}(\mathbf Z_{p, q} \text{ survives}) \text{ and }\label{eq:CZcoupling}\\
 &\mathbb{P}(|\widehat{\mathscr{C}}_{p, q}| = \infty) = \mathbb{P}(\widehat{\mathbf Z}_{p, q} \text{ survives})\label{eq:hatCZcoupling}.
 \end{align}

\end{lemma}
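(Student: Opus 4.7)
The plan is to explore each percolation cluster block-by-block, with blocks corresponding to the $\Lambda$ and $\widehat{\Lambda}$ structures, and to compare the resulting leaf counts with the one-type branching processes $\mathbf{Z}_{p,q}$ and $\widehat{\mathbf{Z}}_{p,q}$. The equality \eqref{eq:hatCZcoupling} will follow from the fact that $\widehat{\mathbb{T}}$ is a pure tree, while for \eqref{eq:CZcoupling} only an inequality is possible, obtained via an FKG argument.

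For \eqref{eq:hatCZcoupling}, the key observation is that in the pure tree $\widehat{\mathbb{T}}$ the oriented path from $\hat{o}$ to any descendant is unique. This forces distinct leaves $\hat{u}\ne\hat{v}$ in $\widehat{\mathcal{C}}_{p,q}\cap\widehat{V}_\Lambda^{\text{leaf}}$ to be incomparable under the prefix order: if $\hat{u}$ were a prefix of $\hat{v}$, the unique $\hat{o}$-to-$\hat{v}$ path would traverse $\hat{u}$, but every edge leaving $\hat{u}$ satisfies $h(\Phi(\cdot))\geq 2k$ and hence lies outside $\widehat{E}_\Lambda$, contradicting $\hat{v}\in\widehat{\mathcal{C}}_{p,q}$. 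Consequently the subtrees rooted at these leaves are pairwise disjoint, and their edges are disjoint from $\widehat{E}_\Lambda$. Using the tree isomorphism between any such subtree and $\widehat{\mathbb{T}}$ together with independence of edges, iterating the block exploration from each leaf produces a one-type branching process whose offspring distribution is exactly that of $\widehat{Z}(\widehat{\mathcal{C}}_{p,q})$, matching $\widehat{\mathbf{Z}}_{p,q}$. Since $\widehat{\Lambda}$ is finite, $|\widehat{\mathscr{C}}_{p,q}|=\infty$ if and only if this process survives, which gives \eqref{eq:hatCZcoupling}.

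For \eqref{eq:CZcoupling}, the analogous argument breaks down because $\mathbb{T}$ is not a pure tree: long edges create alternative paths, so two leaves of $\mathcal{C}_{p,q}$ can be nested (the deeper one reached from an ancestor via a long edge that skips the shallower one), making the subtrees overlap and the sub-cluster explorations dependent. This is the main obstacle, and it is circumvented by an FKG bound rather than an identification. Set $L:=\mathcal{C}_{p,q}\cap V_\Lambda^{\text{leaf}}$, $\zeta:=\mathbb{P}(|\mathscr{C}_{p,q}|=\infty)$, and $A_v:=\{|\mathscr{C}^{\{v\}}|<\infty\}$ for $v\in V$. I would first verify the decomposition
\[
\{|\mathscr{C}_{p,q}|<\infty\}=\bigcap_{v\in L}A_v,
\]
using that $\mathcal{C}_{p,q}$ is finite and that any vertex of $\mathscr{C}_{p,q}$ outside $V_\Lambda$ must lie in $V^v$ for some $v\in L$ (oriented paths have non-decreasing height, so once they leave $V_\Lambda$ they cannot return). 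Each $A_v$ is decreasing in the configuration and depends only on edges whose source is in $V^v$, hence at height $\geq 2k$; such edges are disjoint from $E_\Lambda$, so $A_v$ is independent of $L$, and $\mathbb{P}(A_v)=1-\zeta$ by translation invariance. Applying the FKG inequality on the product of edges outside $E_\Lambda$, conditionally on $L$, yields
\[
\mathbb{P}\!\left(\bigcap_{v\in L}A_v\;\middle|\;L\right)\geq (1-\zeta)^{|L|},
\]
and taking expectations gives $1-\zeta\geq G(1-\zeta)$, where $G(s):=\mathbb{E}\bigl[s^{Z(\mathcal{C}_{p,q})}\bigr]$ is the probability generating function of the offspring distribution of $\mathbf{Z}_{p,q}$. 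Since $G$ is non-decreasing and convex with $G(1)=1$, iterating $G$ starting from $1-\zeta$ produces a non-increasing sequence converging to the smallest fixed point of $G$, which is the extinction probability of $\mathbf{Z}_{p,q}$; therefore $1-\zeta$ is at least this extinction probability, i.e.\ $\zeta\leq\mathbb{P}(\mathbf{Z}_{p,q}\text{ survives})$, as desired.
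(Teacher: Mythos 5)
Your proof is correct. The paper simply calls the lemma elementary and omits the proof, so there is no paper argument to compare against; the substance is to check that your reasoning is sound, and it is. The two halves are handled with the right distinction: in $\widehat{\mathbb{T}}$ the tree structure forces leaves of $\widehat{\mathcal{C}}_{p,q}\cap\widehat{V}_\Lambda^{\mathrm{leaf}}$ to be pairwise incomparable (any prefix among them would have all its outgoing edges outside $\widehat{E}_\Lambda$, since $h(\Phi(\cdot))\ge 2k$), so the subtrees they root are pairwise disjoint and edge-disjoint from $\widehat{E}_\Lambda$, giving an exact identification with the branching process $\widehat{\mathbf{Z}}_{p,q}$; in $\mathbb{T}$ long edges can skip a leaf $u$ and land inside $V^u$, so leaves of $\mathcal{C}_{p,q}$ may be nested and only an inequality can hold. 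Your FKG argument is the standard way to get that inequality and is in the same spirit as the paper's own inequality~\eqref{eq:prob_mon}: the decomposition $\{|\mathscr{C}_{p,q}|<\infty\}=\bigcap_{v\in L}A_v$ is correct because $\mathcal{C}_{p,q}$ is finite and every vertex of $\mathscr{C}_{p,q}$ reached by a path that leaves $\Lambda$ must pass through some $v\in L$ (heights along oriented paths strictly increase, and an edge can raise height by at most $k$, so the exit vertex has height in $[2k,3k)$); the events $A_v$ are decreasing and depend only on edges with source in $V^v$, hence at height $\ge 2k$ and independent of $L$; FKG then gives $1-\zeta\ge G(1-\zeta)$, and comparison with $G^{(n)}(0)\uparrow\bar q$ yields $1-\zeta\ge\bar q$. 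One cosmetic imprecision: iterating $G$ from $1-\zeta$ converges to the smallest fixed point only when $1-\zeta<1$ (if $\zeta=0$ the iterates stay at $1$), but in that degenerate case the conclusion $\zeta\le\mathbb{P}(\mathbf{Z}_{p,q}\text{ survives})$ is trivial, so the overall argument stands.
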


\begin{lemma}\label{lemma:ZhatZcoupling}
	For every~$\varepsilon > 0$ there exists~$\delta > 0$ such that for any~$p, q \in (\varepsilon, 1-\varepsilon)$, we have that~$Z(\mathcal{C}_{p,q})$ is stochastically dominated by~$ \widehat Z(\widehat{\mathcal{C}}_{p,q-\delta})$.
\end{lemma}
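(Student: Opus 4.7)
The plan is to split the argument into two stages and then invoke the coupling lemma of Section~\ref{sect:critcurve} (Lemma~\ref{lemma:coupling}) to absorb the parameter perturbation.

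First, I would establish the unconditional stochastic domination of $Z(\mathcal{C}_{p,q})$ by $\widehat Z(\widehat{\mathcal{C}}_{p,q})$ by constructing a simultaneous exploration of the two clusters on a common probability space. The underlying geometric fact is that $\widehat\Lambda$ is a tree, so distinct paths from $\hat o$ to distinct preimages of the same vertex $v\in V$ are edge-disjoint, whereas the corresponding $\mathbb{T}$-paths collapse onto overlapping edges whose statuses are therefore positively correlated. Concretely, one can perform a BFS on $\widehat\Lambda$ with i.i.d.\ edge variables, project $\widehat{\mathcal{C}}_{p,q}$ through $\Phi$, and synthesize a configuration on $\Lambda$ (using additional independent Bernoullis where necessary) having the correct marginal law. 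Under this coupling, every vertex of $\mathcal{C}_{p,q}\cap V_\Lambda^{\mathrm{leaf}}$ is the $\Phi$-image of at least one vertex in $\widehat{\mathcal{C}}_{p,q}\cap \widehat V_\Lambda^{\mathrm{leaf}}$, and since $\Phi$ is injective on any transversal of its fibers, this yields $Z\le \widehat Z$ pointwise.

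Second, I would exhibit a \emph{collision} configuration on which the inequality above is strict, with probability bounded below uniformly in $p,q\in(\varepsilon,1-\varepsilon)$. Pick a leaf $v\in V_\Lambda^{\mathrm{leaf}}$ with $h(v)=2k$ and two distinct preimages $\hat v_1,\hat v_2\in\widehat V_\Lambda^{\mathrm{leaf}}$ (for instance, the lift via $2k$ short edges and the lift via one long edge followed by $k$ short edges). Consider the explicit event that both $\hat v_1$ and $\hat v_2$ are connected to $\hat o$ through those two specified open paths and that no further edges alter the leaf counts. Its probability is a finite product of factors of the form $p,\,1-p,\,q,\,1-q$, all bounded away from $0$ and $1$ by the hypothesis; on this event, $v$ contributes $2$ to $\widehat Z$ but only $1$ to $Z$, and the probability admits a uniform lower bound $c(\varepsilon,d,k)>0$.

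With a uniform strict margin in hand, Lemma~\ref{lemma:coupling} converts the strict stochastic domination at parameter $q$ into the desired stochastic domination at parameter $q-\delta$ for some $\delta=\delta(\varepsilon,d,k)>0$; this is exactly the role outlined for the coupling lemma at the beginning of the section, and the same tool is used in the same way in~\cite{LRV17,LS19,SV18}. The principal obstacle is the first stage: the naive attempt to lift each $\mathbb{T}$-path through a canonical embedding $v\mapsto\widetilde v\in\widehat V$ breaks down as soon as the path uses a long edge, since the endpoint of the canonical long-edge lift is not the canonical lift of the target vertex. Constructing a valid coupling therefore requires exploring $\widehat{\mathcal{C}}_{p,q}$ first and defining the $\mathbb{T}$-statuses as a function of the outcome together with auxiliary independent Bernoullis; checking that the resulting $\Lambda$-configuration has exactly the law $\mathbb{P}_{p,q}$ is the most delicate bookkeeping in the argument.
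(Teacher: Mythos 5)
Your first stage matches the paper's approach: the paper constructs a deterministic algorithm that explores $\widehat{\mathcal{C}}(\omega)$ and simultaneously builds a subgraph $\mathcal{C}(\omega)\subset\Lambda$ via $\Phi$, halting the exploration beneath any vertex $\hat r$ for which $\Phi(\hat r)$ is already present (a ``conflict''), and shows that this produces the right marginals and the pointwise inequality $Z(\mathcal{C}(\omega))\le\widehat Z(\widehat{\mathcal{C}}(\omega))$. Your concern that this bookkeeping is delicate is legitimate.

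The gap is in your second and third stages. Lemma~\ref{lemma:coupling} does not ``convert strict stochastic domination at parameter $q$ into stochastic domination at $q-\delta$.'' A uniform lower bound on $\mathbb{P}(Z<\widehat Z)$ under a same-parameter coupling gives no uniform gap between the tail probabilities $\mathbb{P}(\widehat Z\ge t)$ and $\mathbb{P}(Z\ge t)$ across all thresholds $t$ (e.g.\ if $Z=0$, $\widehat Z=1$ a.s.\ then $\mathbb{P}(Z<\widehat Z)=1$ but the tails coincide for every $t\ge 2$), so domination can fail after an arbitrarily small perturbation. What the coupling lemma actually gives is a pair $(X,Y)$ of configurations with $X\sim\mathbb{P}_{p,q}$, $Y\sim\mathbb{P}_{p,q-\delta}$ and almost surely $X=Y$, $X=\bar\omega$, or $Y=\bar\omega$ for some distinguished $\bar\omega$. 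For the conclusion $Z(\mathcal{C}(X))\le\widehat Z(\widehat{\mathcal{C}}(Y))$ to survive all three cases, $\bar\omega$ must simultaneously satisfy $Z(\mathcal{C}(\bar\omega))=0$ (to handle $X=\bar\omega$ against an arbitrary $Y$) and $\widehat Z(\widehat{\mathcal{C}}(\bar\omega))\ge|V_\Lambda^{\text{leaf}}|$, the deterministic upper bound on $Z$ from~\eqref{eq:lambdanoofleaves} (to handle $Y=\bar\omega$ against an arbitrary $X$). Your ``collision configuration'' achieves neither: on it $Z=1>0$ and $\widehat Z=2$, far below $|V_\Lambda^{\text{leaf}}|$, so both conditional cases are left open. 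The missing piece is the explicit extremal $\bar\omega$: open every edge out of $\hat o$ and every edge in the subtrees of the long children of $\hat o$ (so that $\widehat Z(\widehat{\mathcal{C}}(\bar\omega))\ge|V_\Lambda^{\text{leaf}}|$), but in the subtrees of the short children open only short edges up to $\hat h\le k$, which forces every long jump from $\hat o$ to land on a vertex already placed in $\mathcal{C}(\bar\omega)$ by the short exploration, hence a conflict, hence $\mathcal{C}(\bar\omega)$ is confined to heights at most $k$ and $Z(\mathcal{C}(\bar\omega))=0$.
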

We postpone the proof of this lemma, and for now we show how the above results imply Proposition~\ref{prop:main}.
\begin{proof}[Proof of {Proposition~\ref{prop:main}}] Fix~$\varepsilon > 0$, and take~$\delta > 0$ corresponding to~$\varepsilon$ in Lemma~\ref{lemma:ZhatZcoupling}. For any~$p,q \in (\varepsilon,1-\varepsilon)$ we have
	\[
	\mathbb{P}(|\mathscr{C}_{p,q}| = \infty) \stackrel{\eqref{eq:CZcoupling}}{\leq}\mathbb{P}(\mathbf Z_{p, q} \text{ survives})  \le \mathbb{P}(\widehat{\mathbf Z}_{p, q-\delta} \text{ survives})\stackrel{\eqref{eq:hatCZcoupling}}{=} \mathbb{P}(|\widehat{\mathscr{C}}_{p,q-\delta}| = \infty).
	\]
\end{proof}
\subsection{Coupling and proof of Lemma~\ref{lemma:ZhatZcoupling}}
We will use the following coupling result from~\cite{LRV17}.
\begin{lemma}\label{lemma:coupling}
	Let~$\mathbb{P}_{\theta}$ denote probability measures on a finite set~$S$, parametrized by~$\theta\in(0, 1)^N$, and such that~$\theta\to \mathbb{P}_{\theta}(x)$ is continuous for every~$x\in S$.
	Assume that for some~$\theta_1$ and~$\bar{x} \in S$ we have~$\mathbb{P}_{\theta_1}(\bar{x})>0$.
	Then, for any~$\theta_2$ close enough to~$\theta_1$, such that
	\begin{equation}\label{eq:couplinglemma}
	\sum_{x\in S} |\mathbb{P}_{\theta_1}(x)-\mathbb{P}_{\theta_2}(x)|<\mathbb{P}_{\theta_1}(\bar{x}),
	\end{equation}
	there exists a coupling of two random elements~$X$ and~$Y$ of~$S$ such that 
	~$X\sim\mathbb{P}_{\theta_1}$, $Y\sim\mathbb{P}_{\theta_2}$ and 
	\[
	\mathbb{P}\left(  \{X=Y\}\cup\{X=\bar{x}\}\cup\{Y=\bar{x}\}\right)=1.
	\]
\end{lemma}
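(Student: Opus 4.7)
My plan is to construct the coupling directly by specifying its joint law on $S \times S$, via a slight modification of the maximal coupling.

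Write $\mu := \mathbb{P}_{\theta_1}$, $\nu := \mathbb{P}_{\theta_2}$, and set $\eta := \tfrac12 \sum_{x \in S} |\mu(x) - \nu(x)|$, their total-variation distance. Hypothesis~\eqref{eq:couplinglemma} reads $2\eta < \mu(\bar x)$, and in particular $\eta < \mu(\bar x)$; this strict inequality will be the single quantitative input the construction needs.

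I would then define the joint law $P$ of $(X,Y)$ on $S \times S$ as follows. First, put the shared mass on the diagonal, $P(x,x) := \min(\mu(x), \nu(x))$ for each $x \neq \bar x$. Next, route the $\mu$-excess into the $\bar x$-column, setting $P(x, \bar x) := (\mu(x) - \nu(x))^+$ for each $x \neq \bar x$; symmetrically, route the $\nu$-excess into the $\bar x$-row, setting $P(\bar x, y) := (\nu(y) - \mu(y))^+$ for each $y \neq \bar x$. Finally, set $P(\bar x, \bar x) := \max(\mu(\bar x), \nu(\bar x)) - \eta$, which is the residual value forced by balancing the $\bar x$-marginals, and $P(x,y) := 0$ on all remaining pairs. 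By construction, $P$ is supported on $\{X = Y\} \cup \{X = \bar x\} \cup \{Y = \bar x\}$, which is the desired concentration property.

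The remaining task is to check that $P$ is a bona fide probability measure with the prescribed marginals; this splits into two routine verifications. For nonnegativity, the only entry not manifestly $\ge 0$ is $P(\bar x, \bar x)$, and here $\max(\mu(\bar x), \nu(\bar x)) \ge \mu(\bar x) > \eta$ gives positivity. For marginal consistency, the pointwise decomposition $\mu(x) = \min(\mu(x), \nu(x)) + (\mu(x) - \nu(x))^+$ together with the identity $\sum_x (\mu(x) - \nu(x))^+ = \sum_x (\nu(x) - \mu(x))^+ = \eta$ lets one compute directly that $\sum_y P(x,y) = \mu(x)$ and $\sum_x P(x,y) = \nu(y)$ for every $x,y \in S$.

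\textbf{Main obstacle.} There is essentially no obstacle: the construction is elementary linear algebra. The only slightly subtle point is identifying the correct value of $P(\bar x, \bar x)$, which is forced by the marginal constraints once the other entries are chosen; its nonnegativity is precisely what hypothesis~\eqref{eq:couplinglemma} is engineered to guarantee.
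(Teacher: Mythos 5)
Your construction is correct: the marginal and nonnegativity computations all check out, and the joint law is manifestly supported on $\{X=Y\}\cup\{X=\bar x\}\cup\{Y=\bar x\}$. Note that the paper itself does not supply a proof of this lemma --- it cites it to \cite{LRV17} --- so there is no in-paper argument to compare against; but your modified maximal coupling is the natural construction. One small observation worth making explicit: writing $\eta = \tfrac12\sum_x|\mu(x)-\nu(x)|$, your argument only ever uses $\eta \le \mu(\bar x)$ (to guarantee $P(\bar x,\bar x)=\max(\mu(\bar x),\nu(\bar x))-\eta\ge 0$), whereas the lemma's hypothesis~\eqref{eq:couplinglemma} is $2\eta < \mu(\bar x)$. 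So your construction in fact proves the statement under a strictly weaker quantitative assumption; the stated hypothesis is simply a convenient sufficient condition. The only point I would tighten in the write-up is the marginal verification at $x=\bar x$ (respectively $y=\bar x$): it relies on the identity $\max(\mu(\bar x),\nu(\bar x)) - (\nu(\bar x)-\mu(\bar x))^+ = \mu(\bar x)$, which you gesture at but do not spell out; since this is where the choice of $P(\bar x,\bar x)$ is forced, it deserves a displayed line.
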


\begin{proof}[Proof of Lemma~\ref{lemma:ZhatZcoupling}]
	Let~$\widehat\Omega=\{0,1\}^{\widehat{E}_\Lambda}$ be the finite set of all possible configurations on~$\widehat\Lambda$.
	For a configuration~$\omega \in \widehat\Omega$ denote by~$\widehat{\mathcal{C}}(\omega)$ the cluster of the root on~$\widehat\Lambda$.
	We will give an algorithm to produce from~$\omega$ a connected subgraph~$\mathcal{C}(\omega)$ of~$\mathbb{T}_{d,k}$ containing the root.
	The construction will satisfy the following:
	if~$\omega$ is obtained from the product Bernoulli measure in which~$\omega(\hat e) = 1$ with probability~$p$
	if~$\hat e\in\widehat E_{\s}$ and with probability~$q$ if~$\hat e\in\widehat E_{\ell}$,
	then~$\mathcal{C}(\omega)$ and~$\widehat{\mathcal{C}}(\omega)$ are distributed as~$\mathcal{C}_{p,q}$ and~$\widehat{\mathcal{C}}_{p,q}$ respectively.
	
	The algorithm simultaneously explores the clusters~$\widehat{\mathcal{C}}(\omega)$ and~$\mathcal{C}(\omega)$ using alternating rounds of short and long edges.
	\begin{enumerate}
		\item Explore~$\widehat{\mathcal{C}}(\omega)$ querying only edges of~$\widehat{E}_{\s}$.
		Namely, starting from the root, at each step reveal an edge~$\langle\hat v, \hat r\rangle\in \widehat{E}_{\s}$ where~$\hat r$ is not in the cluster yet.
		For each such open edge add~$\Phi(\hat r)$ and~$\langle\Phi(\hat v), \Phi(\hat r)\rangle$ to~$\mathcal{C}(\omega)$.
		Continue until no further vertex can be reached using only edges of~$\widehat{E}_{\s}$.
		Note that after this step~$\mathcal{C}(\omega)$ only contains short edges.
		\item Continue the exploration of~$\widehat{\mathcal{C}}(\omega)$ querying edges of~$\widehat{E}_{\ell}$.
		That is, for each~$\hat v$ in~$\widehat{\mathcal{C}}(\omega)$ so far, reveal edges~$\langle\hat v, \hat r\rangle\in \widehat{E}_{\ell}$.
		If an edge is open, add~$\langle\Phi(\hat v), \Phi(\hat r)\rangle$ to~$\mathcal{C}(\omega)$.
		At this point it is possible that~$\Phi(\hat r)$ is already in~$\mathcal{C}(\omega)$;
		in this case we say that vertex~$\hat r$ causes conflict, and we do not explore its subtree in the upcoming steps.
		Otherwise, add~$\Phi(\hat r)$ to~$\mathcal{C}(\omega)$.
		Continue until no further vertex can be reached using only edges of~$\widehat{E}_{\ell}$.
		In this step we only add long edges to~$\mathcal{C}(\omega)$.
		\item Repeat the exploration process of Step 1 starting from the vertices that were added in Step 2. (Keep in mind that these are vertices that did not cause any conflict.)
		Again, if a new vertex causes conflict, do not continue the exploration process on its subtree.
		\item For the vertices that were added in Step 3 (hence did not cause conflict), repeat Step 2.
	\end{enumerate}
	Note that this algorithm does not necessarily explore the whole cluster~$\widehat{\mathcal{C}}(\omega)$, as it stops at vertices that cause conflict.
	By~\eqref{eq:phimapslambda} and~\eqref{eq:phimapsleaves} each leaf in~$\mathcal{C}(\omega)$ corresponds to an explored vertex of~$\widehat{V}_\Lambda^\text{leaf}$.
	Therefore
	\begin{equation}\label{eq:algleaves}
	Z(\mathcal{C}(\omega))\leq \widehat Z(\widehat{\mathcal{C}}(\omega)).
	\end{equation}
	
	Now for a fixed~$p, q$ and~$\delta$ close enough to zero, we will define a coupling measure~$\mu$ on~$\widehat\Omega^2$ satisfying
	\begin{align*}
	(\omega, \omega')\sim\mu\hspace{20pt} \Longrightarrow \hspace{20pt} &\mathcal{C}(\omega) \stackrel{\text{(d)}}{=}\mathcal{C}_{p, q}, \hspace{5pt} \widehat{\mathcal{C}}(\omega')\stackrel{\text{(d)}}{=}\widehat{\mathcal{C}}_{p, q-\delta}\text{ and } Z(\mathcal{C}(\omega))\leq \widehat Z(\widehat{\mathcal{C}}(\omega')).
	\end{align*}
	The construction will involve Lemma~\ref{lemma:coupling}.
	
	Define a configuration~$\bar{\omega}\in\widehat\Omega$ as follows:
	\begin{itemize}
		\item every edge starting from the root is open;
		\item for every~$\hat v$ satisfying~$\langle \hat o, \hat v \rangle\in\widehat E_{\ell}$, every edge on the subtree of~$\hat v$ is open;
		\item for every~$\hat v$ satisfying~$\langle \hat o, \hat v \rangle\in\widehat E_{\s}$, on the subtree of~$\hat v$ only edges of the form
		$\{\langle \hat r, \hat s\rangle\in\widehat E_{\s}: \hat h(\hat s)\leq k\}$ are open.
	\end{itemize}
	Now we construct~$\mathcal{C}(\bar\omega)$ following the algorithm above. The first step of the exploration reveals edges~$\{\langle \hat r, \hat s\rangle\in\widehat E_{\s}: \hat h(\hat s)\leq k\}$ of~$\widehat{\mathcal{C}}(\bar\omega)$.
	The corresponding open edges in~$\mathcal{C}(\bar\omega)$ are~$\{\langle r,  s\rangle\in E_{\s}: h(s)\leq k\}$, hence every vertex~$v$ in~$V_\Lambda$ satisfying~$h(v)<k+1$ will be added to the cluster.
	In Step 2 we explore the long edges starting from the root. 
	The endpoints of these edges are mapped into vertices of~$V_\Lambda$ at height~$k$, which are already in~$\mathcal{C}(\bar\omega)$.
	Thus these vertices cause conflicts, their subtrees will not be explored.
	By the definition of~$\bar\omega$ there are no more long edges starting from the cluster explored so far, so the exploration process stops.
	Since the vertices that can be reached by an open path in~$\mathcal{C}(\bar\omega)$ satisfy~$h(v)<k+1$,
	by~\eqref{eq:lambdaleaves} we have~\[Z(\mathcal{C}(\bar\omega))= 0.\]
	
	Now we will show that for any~$v\in V_\Lambda^\text{leaf}$ there exists a~$\hat v\in \widehat{\mathcal{C}}_{p,q}(\bar{\omega})\cap\widehat{V}_\Lambda^\text{leaf}$
	such that~$\Phi(\hat v)=v$, therefore~\[\widehat Z(\widehat{\mathcal{C}}(\bar{\omega}))\geq |V_\Lambda^\text{leaf}|.\]
	Let~$v=(v_1, \dots, v_m)\in V_\Lambda^\text{leaf}$, then by~\eqref{eq:lambdaleaves} we have~$2k\leq m <3k$.
	Define~$\hat v=(\hat v_1, \dots, \hat v_{m-2k})$ as follows:
	\begin{align*}
	\hat v_1=&\varphi^{-1}((v_1, \dots, v_k)),\\
	\hat v_2=&\varphi^{-1}(v_{k+1}),\\
	&\vdots\\
	\hat v_{m-2k+1}=&\varphi^{-1}(v_{m-k}),\\
	\hat v_{m-2k}=&\varphi^{-1}((v_{m-k+1}, \dots, v_m)).\\
	\end{align*}
	Then~$\Phi(\hat v)=v$ and by~\eqref{eq:hatlambdaleaves}~$\hat v \in \widehat{V}_\Lambda^\text{leaf}$.
	By the definition of~$\bar\omega$ there is an open path to~$\hat v$ in~$\bar\omega$, hence~$\hat v\in \widehat{\mathcal{C}}_{p,q}(\bar{\omega})\cap\widehat{V}_\Lambda^\text{leaf}$ indeed.
	
	By Lemma~\ref{lemma:coupling}, if~$\delta$ is small enough, there exists a coupling of configurations~$X, Y\in \widehat\Omega$ satisfying
	\begin{itemize}
		\item the values of~$X$ on all edges are independent;
		\item~$X$ assigns each edge of~$\widehat E_{\s}$ and~$\widehat E_{\ell}$ to be open with probability~$p$ and~$q$ respectively; 
		\item the values of~$Y$ on all edges are independent;
		\item~$Y$ assigns each edge of~$\widehat E_{\s}$ and~$\widehat E_{\ell}$ to be open with probability~$p$ and~$q-\delta$ respectively;
		\item~$(X, Y)$ satisfies
		\begin{equation}\label{eq:coupling}
		\mathbb{P}\left(  \{X=Y\}\cup\{X=\bar{\omega}\}\cup\{Y=\bar{\omega}\}\right)=1.
		\end{equation}
	\end{itemize}
	
	Now let~$\omega=X$ and~$\omega'=Y$.
	Let us examine~$Z(\mathcal{C}(\omega))$ and~$\widehat Z(\widehat{\mathcal{C}}(\omega'))$ in all possible cases listed in~\eqref{eq:coupling}:
	\begin{itemize}
		\item if~$X=Y$, then by~\eqref{eq:algleaves} we have~$Z(\mathcal{C}(\omega))\leq \widehat Z(\widehat{\mathcal{C}}(\omega'))$;
		\item if~$X=\bar\omega$, then~$Z(\mathcal{C}(\omega))= 0$;
		\item if~$Y=\bar\omega$, then~$\widehat Z(\widehat{\mathcal{C}}(\omega'))\geq\ |V_\Lambda^\text{leaf}| \stackrel{\eqref{eq:lambdanoofleaves}}{\geq} Z(\mathcal{C}(\omega))$ for all~$\omega$.
	\end{itemize}
	Hence in all cases~$\widehat Z(\widehat{\mathcal{C}}(\omega'))\geq Z(\mathcal{C}(\omega))$.
	
\end{proof}
\vspace{0.5cm}

\textbf{Acknowledgements.} The research of B.N.B.L.\ was supported in part by CNPq grant 305811/2018-5 and FAPERJ (Pronex E-26/010.001269/2016). He is also thankful for the hospitality of Bernoulli Institute, University of Groningen. The research of R. Sz.\ was partially supported by ERC Starting Grant 680275 ``MALIG''. The authors would like to thank Tobias M\"uller for raising the question that lead to Theorem~\ref{THM:CRITCURVE}, and Aernout van Enter for suggesting useful references.

\end{document}